\documentclass[11 pt,reqno]{amsart}

\usepackage[utf8]{inputenc}
\setcounter{MaxMatrixCols}{10}

\title[Homogeneous Lagrangian submanifolds of $\Sl\times\Sl$]{Extrinsically homogeneous Lagrangian submanifolds of the pseudo-nearly Kähler $\Sl\times\Sl$}
\date{}
\author{Mateo Anarella}
% \titlerunning{adsfad}
\address{M. Anarella, Address 1: Université Polytechnique Hauts-de-France, Campus Mont Houy
59313 Valenciennes Cedex 9, France. Address 2: KU Leuven, Department of Mathematics, Celestijnenlaan 200 B – Box 2400, 3001 Leuven, Belgium}
\email{mateo.anarella@kuleuven.be}
\thanks{
    M. Anarella is supported by Methusalem grant METH/21/03 -- long term structural funding of the Flemish
    Government}
\keywords{Nearly Kähler, homogeneous manifold, six-dimensional, Lagrangian, extrinsically homogeneous submanifold}
\subjclass[2020]{53C42}

%basic math stuff
\usepackage{amsmath,amssymb,amsfonts,mathtools,amsthm}
\usepackage{amsfonts}
\usepackage{graphicx}
\usepackage{graphics}
\usepackage{amstext} 
\usepackage[italicdiff]{physics}
\usepackage{mathrsfs}

%hyperreferences
\usepackage[hidelinks,pdfencoding=auto,psdextra]{hyperref}
%colors
\usepackage{color,soul}
\usepackage[dvipsnames]{xcolor}

%tables and lists, images, diagrams
\usepackage{float}
\usepackage{tabularray}
\usepackage{tasks}
\usepackage{tikz-cd}
\usepackage{tikz}
\usetikzlibrary{graphs,quotes}

% aligment
% \usepackage[left=1.5in,right=1.5in,top=2in,bottom=2in]{geometry}
\usepackage{geometry}
\geometry{
  includeheadfoot,
  margin=2.54cm
}

% ?? (related to color,soul)
\setul{}{0.3 ex}

%bibliography
% \usepackage{biblatex}

%theorems
\newtheorem{theorem}{Theorem}

\newtheorem{lemma}[theorem]{Lemma}

\newtheorem{proposition}[theorem]{Proposition}

\theoremstyle{definition}

\newtheorem{remark}[theorem]{Remark}
\newtheorem{example}[theorem]{Example}

\newcommand{\R}{\mathbb{R}}
\newcommand{\Z}{\mathbb{Z}}
\newcommand{\Sl}{\mathrm{SL}(2,\R)}
\newcommand{\id}{\operatorname{Id}}
\newcommand{\li}{\langle}
\newcommand{\ri}{\rangle}
\newcommand{\ii}{\textit{\textbf{i}}}
\newcommand{\jj}{\textbf{\textit{j}}}
\newcommand{\kk}{\textbf{\textit{k}}}

\newcommand{\Ss}{\mathbb{S}}
\newcommand{\slf}{\mathfrak{sl}(2,\R)}

\newcommand{\SO}{\mathrm{SO}}

\newcommand{\U}{\text U}
\newcommand{\SU}{\text{SU}}
\newcommand{\Psl}{\mathrm{PSL}(2,\R)}

\newcommand{\iso}{\operatorname{Iso}}
\newcommand{\isoo}{\operatorname{Iso}_o}

\makeatletter
\newcommand\columntag[2]{#1\def\@currentlabel{#1}.\label{#2}}
\makeatother

\usepackage{import}
\usepackage{xifthen}
\usepackage{pdfpages}
\usepackage{transparent}
\newcommand{%
    
    \import{./images}{.pdf_tex}
}[2][1]{%
    
    \import{./images}{#2.pdf_tex}
}

\begin{document}

\begin{abstract}
   We consider the pseudo-nearly Kähler $\Sl\times\Sl$ and we study its Lagrangian submanifolds. We provide examples of Lagrangian submanifolds which do not have an analogue in $\Ss^3\times\Ss^3$. We also provide an expression for the isometry group of $\Sl\times\Sl$ with the pseudo-Riemannian nearly Kähler metric. The main result is a complete classification of extrinsically homogeneous Lagrangian submanifolds in this space.
\end{abstract}
\maketitle
\section{Introduction}

\vspace{3 ex}
Kähler geometry can be seen as the intersection of three areas of differential geometry: symplectic, Riemannian and complex geometry. That is, a Kähler manifold carries a Riemannian metric $g$, a symplectic form $\omega$ and a complex structure $J$ such that 
\[
g(JX,JY)=g(X,Y), \ \ \ \ \ \ \ \ \ \  \omega(X,Y)=g(JX,Y).    
\]
Equivalently, an almost Hermitian manifold $(M,g,J)$ is Kähler if and only if $\nabla J\equiv 0$, where $\nabla$ is the Levi-Civita connection associated to~$g$. 

There are only two spheres admiting an almost complex structure: $\Ss^2$ and $\Ss^6$. 
Surprisingly, the unit six-sphere $\Ss^6$ with its canonical almost complex structure, inherited from the octonion product, is not Kähler. 
Instead, it is what is known as a nearly Kähler manifold.

A nearly Kähler manifold is an almost Hermitian manifold $(M,J,g)$ such that $\nabla J$ is a skew symmetric $(2,1)$-tensor. 
If moreover, $\nabla_XJ\not\equiv0$ for \textit{all} $X\in \mathfrak{X}(M)$, then we say that $M$ is strict nearly Kähler. 
In six dimensions, this is equivalent to being nearly Kähler but not Kähler. 
In general, we can think of strict nearly Kähler manifolds as nearly Kähler manifolds from which we cannot extract Kähler factors. 
Indeed, Gray \cite{gray2} showed that any complete, simply connected nearly Kähler manifold can be written as $M_1\times M_2$ where $M_1$ is strict nearly Kähler and $M_2$ is Kähler. 

Strict nearly Kähler manifolds turned out to be quite rare. 
For instance, in \cite{gray2} Gray also showed that there are no eight-dimensional strict nearly Kähler manifolds. 
Also, it can be easily proved that two- and four-dimensional nearly Kähler manifolds are automatically Kähler. 
Later on, Nagy~\cite{nagy2} showed that any complete 10-dimensional nearly Kähler manifold is either the product of a six-dimensional nearly Kähler manifold and a Kähler surface, or the twistor space over an eight-dimensional quaternionic Kähler manifold with positive Einstein constant (or positive quaternionic Kähler, for short).
All eight-dimensional positive quaternionic Kähler manifolds are classified, being the symmetric spaces $\mathbb{H}P^2$, $\mathbb{G}r_2(\mathbb{C}^4)$ and $G_2/\SO (4)$.
Therefore, all 10-dimensional strict nearly Kähler manifolds are classified, as described in~\cite{lebrunsolomon}.

Six-dimensional strict nearly Kähler manifolds are of particular interest, since they are the lowest-dimensional non-Kähler examples we encounter. 
In fact, Nagy~\cite{nagy} showed that any nearly Kähler manifold is a Riemannian product whose factors are six-dimensional nearly Kähler manifolds, certain homogeneous nearly Kähler manifolds or twistor spaces over positive quaternionic Kähler manifolds.

All six-dimensional Riemannian \textit{homogeneous} strict nearly Kähler manifolds were classified by Butruille in~\cite{butruille}, these being
\begin{equation*}
    \begin{aligned}
        \bullet & \quad  \Ss^6&&=G_2/\SU(3),\\
        \bullet & \quad \Ss^3\times\Ss^3&&=
        (\SU(2)\times\SU(2)\times\SU(2))/\Delta \SU(2),\\
        \bullet & \quad \mathbb{C}P^3&&=\operatorname{Sp}(2)/(\U(1)\times\SU(2)),\\
        \bullet &\quad  F(\mathbb{C}^3)&&=\SU(3)/(\U(1)\times\U(1)).    \\
    \end{aligned}
\end{equation*}

Recently, Foscolo and Haskins~\cite{foscolo} showed the existence of six-dimensional strict nearly Kähler manifolds which are not homogeneous.

By means of a $T$-dual construction, Kath~\cite{ines} and Schäfer~\cite{Schafer} provided six pseudo-Riemannian analogues of the spaces in the list above. 
However, this list does not provide a full classification of six-dimensional homogeneous pseudo-Riemannian nearly Kähler manifolds (or pseudo-nearly Kähler for short). 
In fact, Alekseevsky et al.\ constructed in~\cite{Alekseevsky} an example of a homogeneous pseudo-nearly Kähler six-manifold which is not a $T$-dual of a Riemannian one.

In this article, we focus on the analogue of $\Ss^3\times\Ss^3$, which is the pseudo-nearly Kähler $\Sl\times\Sl$. 
In~\cite{anarella}, the authors studied Lagrangian submanifolds of $\Sl\times\Sl$ and gave a classification up to congruence of all totally geodesic Lagrangian submanifolds. 
There, the authors divided Lagrangian submanifolds into four types, which depend on their behavior with respect to a specific almost product structure. 

In this paper, we provide an expression of the isometry group of $\Sl\times\Sl$ which, to the knowledge of the author, cannot be found in the literature.

\begin{theorem}\label{groupofisometries1}
    The isometry group of the pseudo-nearly Kähler $\Sl\times\Sl$ is $\big(\Sl\times\Sl\times\Sl\big)\rtimes\big(\Z_2\times S_3\big)$, where $S_3$ is the symmetric group of order 6.
\end{theorem}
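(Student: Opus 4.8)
The plan is to use the realization of $M=\Sl\times\Sl$ as the naturally reductive $3$-symmetric space $G^3/\Delta G$ with $G=\Sl$, whose nearly Kähler structure is the canonical one attached to the order-three symmetry permuting the factors. First I would establish the lower bound by writing down the claimed group as genuine isometries. The nine-dimensional group $G^3$ acts by $(a,b,c)\cdot(p,q)=(a\,p\,c^{-1},\,b\,q\,c^{-1})$; expressing tangent vectors through left translation and using $\operatorname{Ad}$-invariance of the Killing form on $\slf$, one checks on the explicit metric that this action is isometric and preserves $J$, with isotropy $\Delta G$ at $(e,e)$ (hence transitive) and ineffective kernel $\{(z,z,z):z\in Z(G)\}\cong\Z_2$. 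Permuting the three factors yields an $S_3$-action on $G^3/\Delta G$ fixing $(e,e)$, which is isometric because both the metric and its defining symmetry are symmetric in the three factors; a short computation shows the $3$-cycles preserve $J$ while the transpositions send $J$ to $-J$. The remaining $\Z_2$ is generated by the diagonal outer automorphism $c_\epsilon\colon(p,q)\mapsto(\epsilon p\epsilon^{-1},\epsilon q\epsilon^{-1})$ with $\epsilon=\operatorname{diag}(1,-1)$; since $\operatorname{Ad}_\epsilon$ lies outside $\operatorname{Ad}(G)$, this is a new isometry, it preserves $J$, and it commutes with the factor-permutations. Recording how these finite symmetries act on the normal factor $G^3$ produces the semidirect product $G^3\rtimes(\Z_2\times S_3)$.

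For the upper bound I would first determine the identity component $\isoo(M)$. Since $M$ is homogeneous, $\isoo(M)$ is transitive and $\dim\isoo(M)=6+\dim\mathfrak h$, where $\mathfrak h$ is the isotropy algebra at a point $p$. By the Nomizu--Singer description of the isometry algebra, $\mathfrak h$ coincides with the space of skew-symmetric endomorphisms $A$ of $(T_pM,g_p)$ that annihilate the curvature tensor and all its covariant derivatives. Feeding in the explicit curvature of the pseudo-nearly Kähler metric, I expect this space to be exactly the three-dimensional image of $\operatorname{ad}(\slf)$, so that $\dim\isoo(M)=9$ and $\isoo(M)$ is precisely the (image of the) $G^3$-action found above.

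It then remains to compute the finite component group $\iso(M)/\isoo(M)$. Since $M$ is connected and $\isoo(M)$ acts transitively, this quotient is isomorphic to $\iso_p(M)/\isoo(M)_p$, the isotropy at $p$ modulo the connected isotropy $\isoo(M)_p\cong\operatorname{Ad}(G)$; moreover an isometry fixing $p$ is determined by its differential there, so I must enumerate the linear isometries of $(T_pM,g_p)$ preserving the curvature data and the pair $\{J_p,-J_p\}$, modulo $\operatorname{Ad}(G)$. Here I use that on a strict nearly Kähler six-manifold $J$ is determined by $g$ up to sign, so that $\iso(M)$ admits a well-defined sign homomorphism to $\Z_2$ recording whether an isometry preserves or reverses $J$. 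The $J$-preserving components contribute $\Z_2\times\Z_3$ — the diagonal outer involution together with the $3$-cycles — and the $J$-reversing coset doubles this to a finite group of order $12$; matching these components against the representatives constructed in the first step identifies the component group as exactly $\Z_2\times S_3$. Assembling the pieces yields $\iso(M)=\big(\Sl\times\Sl\times\Sl\big)\rtimes\big(\Z_2\times S_3\big)$.

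The principal obstacle lies in the upper bound and is computational rather than conceptual. Because the metric is indefinite, the isotropy sits inside a noncompact pseudo-orthogonal algebra instead of $\mathfrak{so}(6)$, so none of the compactness shortcuts available for $\Ss^3\times\Ss^3$ apply; proving that the curvature-annihilating endomorphisms are only three-dimensional — that is, that there are no unexpected Killing fields — requires an honest computation with the explicit curvature. A secondary difficulty is confirming that the finite part is precisely $\Z_2\times S_3$ rather than a larger group or a different extension, which amounts to checking that the outer involution, the factor-permutations, and the sign of $J$ interact exactly as the \emph{direct} product $\Z_2\times S_3$ demands.
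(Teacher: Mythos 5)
There is a genuine gap, and it is conceptual before it is computational. Your treatment of the component group rests on the claim that ``on a strict nearly Kähler six-manifold $J$ is determined by $g$ up to sign'', which you use to define the sign homomorphism $\iso(M)\to\Z_2$ and, implicitly, to identify the full metric isometry group with the group of almost-Hermitian isometries. That claim is false in this generality even in Riemannian signature: the nearly Kähler structures compatible with the round metric on $\Ss^6$ form the $7$-dimensional family $\SO(7)/G_2\cong\mathbb{R}P^7$. For $\Ss^3\times\Ss^3$ the uniqueness of $J$ up to sign is a consequence of Butruille's theorem that the nearly Kähler structure itself is unique, and no pseudo-Riemannian analogue of that theorem is available: Schäfer's result for $\Sl\times\Sl$ only gives uniqueness among \emph{left-invariant} structures. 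This is precisely why the paper does not compute the full metric isometry group at all; it \emph{defines} the isometry group of the pseudo-nearly Kähler $\Sl\times\Sl$ as the group of isometries $\mathcal{F}$ with $\mathcal{F}_*J=\pm J\mathcal{F}_*$ and proves the theorem for that group. As written, your argument is therefore either circular (you feed in the $J$-uniqueness as an input rather than deriving it) or it is aiming at a strictly stronger theorem, in which case the steps you postpone carry all the weight.

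Second, even granting the intended interpretation, the two places where the content lies are only ``expected'' in your write-up: you neither verify that the skew-symmetric endomorphisms annihilating $\tilde{R}$ and its covariant derivatives form exactly a three-dimensional space, nor carry out the enumeration of isotropy components; you flag both yourself as the principal obstacle. The paper's proof avoids both computations by a structural argument: any almost product structure satisfying \eqref{proppe} and \eqref{curv} must be of the form $\cos\tau\, P+\sin\tau\, JP$ with $\tau\in\{0,\tfrac{2\pi}{3},\tfrac{4\pi}{3}\}$ (Lemma \ref{threeps}), so a given structure-preserving isometry, after composition with a suitable $\Psi_{\kappa,\tau}$ from \eqref{isoslsl}, a translation $\phi_{(a,b,c)}$, and a conjugation coming from $\mathrm{SL}^{\pm}(2,\R)$ via Lemma \ref{lemmasl2requaltoso21}, fixes $(\id_2,\id_2)$ with identity differential there; rigidity of isometries of a connected pseudo-Riemannian manifold then forces it to be the identity. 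If you want to salvage your route, you must (i) restrict from the outset to structure-preserving isometries (or else prove the pseudo-Riemannian $J$-uniqueness, which would be a new result beyond the theorem), and (ii) actually perform the curvature and isotropy computations you deferred.
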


Moreover, we study extrinsically homogeneous Lagrangian submanifolds of $\Sl\times\Sl$. 
That is, those Lagrangian submanifolds $f:M\to\Sl\times\Sl$ such that there exist a Lie group $H$ acting transitively by isometries on $f(M)$. 
To simplify things we take $H$ to be a Lie subgroup of the connected component of the identity of the isometry group, i.e.\
\[\isoo(\Sl\times\Sl)=\Sl\times\Sl\times\Sl.\]

Orbits of Lie subgroups of the isometry group are among the most natural submanifolds of homogeneous spaces. 
They are tightly linked to submanifolds with constant sectional curvature, totally geodesic submanifolds  and submanifolds with parallel second fundamental form (see for instance \cite{discala,olmos}). 
In particular, Lagrangian submanifolds of six-dimensional (pseudo-)nearly Kähler manifolds are automatically minimal.

In Theorem \ref{maintheorem} we provide a full classification of extrinsically Lagrangian homogeneous submanifolds of $\Sl\times\Sl$.
Among the submanifolds in the classification, we can find the three totally geodesic examples of \cite{anarella} and two submanifolds with constant sectional curvature. 
These five submanifolds have analogues in the classification of extrinsically homogeneous Lagrangian submanifolds of $\Ss^3\times\Ss^3$, found in \cite{constantangles}.

In addition, we obtain three more Lagrangian submanifolds without an analogue in the Riemannian case. 
The first two are immersions of a space form with constant sectional curvature $-\tfrac{3}{2}$.
Moreover, the second example is actually an infinite family of Lagrangian immersions.
Finally, the last example does not seem to have any intrinsic or extrinsic invariant besides of being Lagrangian and minimal.

\begin{theorem}\label{maintheorem}
Let $f:(M,g)\to\Sl\times\Sl$ be an extrinsically homogeneous Lagrangian immersion into the pseudo-nearly Kähler $\Sl\times\Sl$. 
Then $f(M)$ is congruent to an open subset of the image of one of the following embeddings, whose image is the orbit of $(\id_2,\id_2)$ by~$H\subset\isoo(\Sl\times\Sl)$:

\begin{equation*}
    \begin{tblr}{c c c c l}
        \SetHline{1-Z}{0.5 pt}
    (M,g) & f & H & \text{\emph{Isotropy}} & \text{\emph{Remarks}} \\
        \SetHline{1-Z}{0.5 pt}   
     (\Sl, \frac{2}{3}\li,\ri) & u\mapsto (u,u) &\Sl & 0 &  \begin{aligned}
         & \text{\emph{Totally geodesic}} \\  & K=-\tfrac{3}{2}  
     \end{aligned}\\
     (\Sl,g^+_{\kappa,\tau})  & u\mapsto (u,\ii u\ii) & \Sl & 0 & \text{\emph{Totally geodesic}}\\
     (\Sl,g^-_{\kappa,\tau})  & u\mapsto (u,-\kk u\kk) & \Sl & 0 & \text{\emph{Totally geodesic}} \\
     (\Psl, \frac{8}{3}\li,\ri) & [u]\mapsto (\ii u\ii u^{-1},\jj u\jj u^{-1}) &\Sl & \Z_2 & K=-\tfrac{3}{8}\\
     \R^3_1/\mathbb{Z}  & (u,v,w)\mapsto (e^{v \ii}e^{-u \kk},e^{w \jj}e^{-u \kk}) &  \R^2\times \Ss^1 & 0 & K=0\\
     (\R^3,\hat{g}) & \iota  & \R\ltimes_{\varphi_o}\R^2 & 0 & K=-\tfrac{3}{2}\\
     (\R^3/H_\lambda,g_\lambda)  & f_\lambda & (\R\ltimes_{\varphi_1}\R^2)/ H_\lambda & 0 & K=-\tfrac{3}{2}\\
     (\R^3,\tilde{g}) & \jmath & \R\ltimes_{\varphi_2}\R^2 & 0 & \\
    \SetHline{1-Z}{0.5 pt}
\end{tblr}
\end{equation*}
Here $K$ is the sectional curvature of $f(M)$ and $
\ii,\jj,\kk$ are the matrices
\[
\ii=\begin{pmatrix}
1&0\\
0&-1\\
\end{pmatrix}, \ \ \ \ \ \ \jj=\begin{pmatrix}
    0&1\\
    1&0\\
\end{pmatrix}, \ \ \ \ \ \ 
\kk=\begin{pmatrix}
0&1\\
-1&0
\end{pmatrix}.
\]
Furthermore, $\li,\ri$ is the metric given in \eqref{prodsl2}, $g^+_{\kappa,\tau}$ and $g^-_{\kappa,\tau}$ are Berger-like metrics stretched in a spacelike and timelike direction, respectively; $\hat{g}$, $\iota$ and $\varphi_o$ are given in Example \ref{type2asubmanifoldexample}; $g_\lambda$, $f_\lambda$, $H_\lambda$ and $\varphi_1$ are given in Example \ref{type2bsubexample}; $\tilde{g}$, $\jmath$ and $\varphi_2$ are given in Example \ref{type3submanifold}.

Conversely, the maps listed in the table above are extrinsically homogeneous Lagrangian submanifolds of $\Sl\times\Sl$. 
Moreover, all immersions are not congruent to each other, including the different immersions of the family $f_\lambda$.
\end{theorem}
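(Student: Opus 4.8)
The plan is to combine the almost-product-structure classification of Lagrangian submanifolds from \cite{anarella} with the rigidity that extrinsic homogeneity imposes on the second fundamental form, and then to integrate the resulting structure equations explicitly. By Theorem~\ref{groupofisometries1} the connected isometry group is $\isoo(\Sl\times\Sl)=\Sl\times\Sl\times\Sl$, so extrinsic homogeneity means that $f(M)=H\cdot(\id_2,\id_2)$ for some Lie subgroup $H\subset\Sl\times\Sl\times\Sl$ acting transitively by nearly Kähler isometries; in particular $H$ preserves $g$, $J$ and the induced geometry of $f(M)$. Since Lagrangian submanifolds of a six-dimensional (pseudo-)nearly Kähler manifold are automatically minimal, I use minimality throughout. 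The first step is to recall from \cite{anarella} the angle functions attached to the almost product structure $P$ and the division of Lagrangian submanifolds into the four types, and to observe that an adapted (pseudo-)orthonormal frame $\{E_1,E_2,E_3\}$ of $TM$ produces the normal frame $\{JE_1,JE_2,JE_3\}$, so that the full extrinsic data is encoded by the totally symmetric cubic form $C(X,Y,Z)=g(h(X,Y),JZ)$ together with the angle functions.

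The heart of the forward direction is that extrinsic homogeneity forces all these invariants to be constant: transporting an adapted frame by the elements of $H$ shows that the angle functions and the components of $C$ in such a frame do not depend on the point. I would then argue type by type. In each type the constancy of the angle functions collapses the Gauss and Codazzi equations, together with the nearly-Kähler structure equation for $\nabla J$, into a finite algebraic system for the connection coefficients and the components of $C$; solving this system pins down the finitely many admissible infinitesimal models (including that the totally geodesic case $h\equiv 0$ occurs in exactly three configurations). Once the infinitesimal model --- the homogeneous metric $g$, the cubic form $C$ and the induced connection --- is determined, the immersion is recovered by integrating the moving-frame equations; because the data is constant this integration is solvable in closed form and exhibits $f(M)$ as the orbit of $(\id_2,\id_2)$ under an explicit three-dimensional subgroup $H$, which I then match against the groups in the table ($\Sl$, $\R^2\times\Ss^1$, and the solvable groups $\R\ltimes_{\varphi_i}\R^2$).

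For the converse I verify directly that each map in the table is a Lagrangian immersion: computing $df$ shows it has rank three, and checking $g(Jf_*X,f_*Y)=0$ on a basis of tangent vectors shows the image is Lagrangian. For each row I confirm that the indicated subgroup $H$ acts transitively on the image with the stated isotropy --- immediate for the totally geodesic rows with $H=\Sl$, and a short orbit computation for the solvable groups and for $\Psl$, where the $\Z_2$ isotropy reflects the quotient by $\pm\id_2$ --- and I compute $K$ from the Gauss equation. Non-congruence, taken with respect to the full isometry group of Theorem~\ref{groupofisometries1}, is then settled by comparing invariants preserved by every ambient isometry: being totally geodesic separates the first three rows from the rest, the value of $K$ separates most of the remaining cases, and for the two distinct non-totally-geodesic immersions with $K=-\tfrac{3}{2}$ (the map $\iota$ of $(\R^3,\hat g)$ and the members of $f_\lambda$) I distinguish using the norm of $\nabla h$, which is a full isometry invariant. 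The family $f_\lambda$ requires a genuinely $\lambda$-dependent extrinsic invariant, which I extract from the covariant derivatives of $h$, so that distinct $\lambda$ give non-congruent immersions.

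The main obstacle I anticipate is the forward classification, and within it the type producing the last example $(\R^3,\tilde g)$: since it carries no distinguished intrinsic or extrinsic invariant beyond being Lagrangian and minimal, its uniqueness up to congruence cannot be read off from a curvature or angle condition and must instead be forced purely through the structure equations and the integration step. A secondary difficulty is conceptual rather than computational --- namely justifying cleanly that the global, group-theoretic hypothesis of extrinsic homogeneity is equivalent to the pointwise constancy of the cubic form and the angle functions in an adapted frame, which is exactly what makes the moving-frame method applicable. Care is also needed with signature, since in the pseudo-Riemannian setting the angle functions and adapted frames can degenerate along null directions; it is precisely this degeneration that produces the genuinely new examples (such as the $\R^2\times\Ss^1$ orbit with $K=0$) that have no $\Ss^3\times\Ss^3$ analogue.
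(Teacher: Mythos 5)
Your forward and converse steps follow essentially the same route as the paper (split by the four types of Lemma \ref{propAB}, use homogeneity to make the angle functions and the frame components of $h$ and $\nabla$ constant, reduce \eqref{nablap} and Gauss--Codazzi to algebraic relations, integrate the moving-frame system, then check each orbit directly), but your non-congruence argument has a genuine gap, and it is exactly where the pseudo-Riemannian degeneracy you mention at the end bites. You propose to distinguish $\iota$ from the family $f_\lambda$, and the members $f_{\lambda_1}$, $f_{\lambda_2}$ from each other, by the norm of $\overline{\nabla}h$, ``a full isometry invariant.'' For these type II submanifolds this invariant carries no information. The canonical adapted frame is a $\Delta_2$-frame, so $E_1,E_2$ are null with $g(E_1,E_2)=1$, and by Proposition \ref{propositiontype2twotypes} the only nonvanishing components of $h$ are $h(E_2,E_2)=h_{22}^3\,JE_3$ and $h(E_2,E_3)=h(E_3,E_2)=h_{22}^3\,JE_1$. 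Every nonzero component of the cubic form $C(X,Y,Z)=g(h(X,Y),JZ)$ therefore has negative weight under the null rotation $E_1\mapsto tE_1$, $E_2\mapsto t^{-1}E_2$ (an isometry of the $\Delta_2$-metric), and since metric contraction pairs index $1$ with index $2$, every complete contraction vanishes: one checks directly that $\|h\|^2=0$ even though $h\neq0$, and the same index bookkeeping (all nonzero components of $\overline{\nabla}h$ again have negative weight, e.g.\ the $\lambda$-dependent component $(\overline{\nabla}_{E_2}h)(E_2,E_2)$ pairs with $(\overline{\nabla}_{E_1}h)(E_1,E_1)=0$) gives $\|\overline{\nabla}h\|^2=0$ for both values $h_{22}^3=-\tfrac{\sqrt{2}}{3}$ and $h_{22}^3=\tfrac{2\sqrt{2}}{3}$ and for every $\lambda$. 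So no norm of $h$ or of its covariant derivatives can separate rows six and seven of the table, nor the members of the family $f_\lambda$; this part of your proof would fail.

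The paper's resolution is different in kind and you need it (or an equivalent): instead of scalar invariants it uses the frame components $h_{22}^3$ and $\omega_{22}^3=\sqrt{\tfrac{2}{3}}(1-\lambda)$ in the canonical adapted frame. These are invariants only under isometries preserving $P$ and $J$, which suffices for $\Sl\times\Sl\times\Sl\rtimes\Z_2$ because, by the uniqueness of the adapted frame (Proposition \ref{frameuniquecase2}), such an isometry must carry one canonical frame to the other; the residual isometries $\Psi_{\kappa,\tau}$ with $\tau\neq 0$ are excluded separately via Lemma \ref{isometrieswithAB}, since they rotate $P$ into $\cos\tau\, P+\sin\tau\, JP$ and the image could then not admit a type II normal form at all. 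Two further remarks. First, the ``conceptual'' step you flag---homogeneity implies constancy of the invariants in an adapted frame---indeed requires the uniqueness of that frame, and in the type II case with $\theta_1=\theta_2$ the frame is \emph{not} determined by $P$ alone: the paper must normalize it using $h$ itself (imposing $h_{22}^1=0$ in Proposition \ref{frameuniquecase2}) before the transport argument closes; your proposal leaves this unresolved. Second, a small factual correction: the flat example $\R^3_1/\Z$ is of type I and \emph{does} have an $\Ss^3\times\Ss^3$ analogue; the examples with no Riemannian counterpart are precisely those arising from the null normal forms of types II and III, i.e.\ the last three rows of the table. Also note that the algebraic system does not pin down ``finitely many'' infinitesimal models: type II produces a continuous one-parameter family, which is the source of $f_\lambda$.
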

As the definition indicates, classifications of extrinsically homogeneous submanifolds usually follow from  classifications of Lie subgroups of the isometry group.
However, as opposed to the compact case, an isometry group with non-compact semi-simple Lie algebra might present many difficulties when looking for subgroups, as maximal Lie subalgebras might not be reductive (see~\cite{mostow}).
The condition of being reductive simplifies the process considerably, as we can obtain its maximal Lie subalgebras in a simple way (see Theorem 2.1 in~\cite{kollross}).
Moreover, when we increase the codimension of the submanifold, the process gets more difficult, as we may have to repeat it several times.

Therefore, in this paper we classify extrinsically homogeneous Lagrangian submanifolds by using the properties of Lagrangian submanifolds of nearly Kähler spaces, and how the Lie subgroup acting on them preserves their structure.

The paper is organized as follows.
In Section \ref{preliminaries} we give a brief introduction to the pseudo-nearly Kähler structure of $\Sl\times\Sl$ and we prove Theorem \ref{groupofisometries1}. 
In Section \ref{lagrangiansubmanifolds} we state some properties of Lagrangian submanifolds of $\Sl\times\Sl$. 
In Section \ref{sectionextrhomosub} we provide examples of extrinsically homogeneous Lagrangian submanifolds of $\Sl\times\Sl$.
Finally, in Section \ref{sectionproofmaintheorem} we prove Theorem \ref{maintheorem}.

\section{The pseudo-nearly Kähler \texorpdfstring{$\Sl\times\Sl$}{SL(2,R)xSl(2,R)}} \label{preliminaries}
The nearly Kähler structure of $\Sl\times\Sl$ is given in detail in \cite{anarella} and \cite{Ghandour}. Here we recall some of the structure necessary for this article.

\subsection{The manifold \texorpdfstring{$\boldsymbol{\Sl}$}{SL(2,R)}}
Consider the  real vector space of $2\times2$ real matrices $M(2,\R)$ with the indefinite inner product $\li,\ri$ given by
\begin{equation}
    \langle a,b\rangle=-\frac{1}{2}\operatorname{Trace}(\operatorname{adj}(a)b).\label{prodsl2}
\end{equation}
The real special linear group $\Sl$ can be defined as
\begin{equation}\label{slspaceform}
    \Sl=\{a\in M(2,\R):\langle a,a\rangle=-1\}.     
\end{equation}
Identifying $(M(2,\R),\li,\ri)$ with $\R_2^4$ we obtain that $\Sl$ is isometric to the three-dimensional anti-de Sitter space $H^3_1(-1)$, defined as
\[
H_1^3(-1)=\left\{x\in\R^4_2:-x_0^2-x_1^2+x_2^2+x_3^2=-1\right\}.    
\]
Hence, $(\Sl,\li,\ri)$ is a Lorentzian manifold with constant sectional curvature $-1$. 
From \eqref{slspaceform} it follows that the tangent space of $\Sl$ at a matrix $a$ is the orthogonal space $a^\perp$, which can be written as
\begin{equation*}
T_a\Sl =a^\perp =\{ a\alpha: \alpha\in \mathfrak{sl}(2,\R)\}.   \label{tangentspace}
\end{equation*}
Here $\mathfrak{sl}(2,\R)$ is the Lie algebra of $\Sl$, consisting of all the matrices in $M(2,\R)$ with vanishing trace.

Similar to the tangent space of the three-sphere $\Ss^3$, the tangent space of $\Sl$ at $\id_2$ is spanned by the split-quaternions $\ii$, $\jj$ and $\kk$, given by
\begin{equation}
\ii=\begin{pmatrix}
    1&0\\
    0&-1
\end{pmatrix}, \ \ \ \ \ \ \jj=\begin{pmatrix}
0&1\\
1&0\\
\end{pmatrix}, \ \ \ \ \ \
\kk=\begin{pmatrix}
0&1\\
-1&0\\
\end{pmatrix}.\label{ijksl2R}
\end{equation}
We define the frame $\{X_i\}_i$ on $\Sl$ by
\begin{equation}
    X_1(a)=a\ii, \ \ \ \ \ \ \ \ \ X_2(a)=a\jj, \ \ \ \ \ \ \ \ \ X_3(a)=a\kk. \label{frameXi}
\end{equation}
This is an orthogonal frame on $\Sl$ with
\[
\li X_1,X_1\ri=\li X_2 ,  X_2\ri=1, \ \ \ \ \ \ \ \li X_3,X_3\ri=-1.  
\]
Given $\alpha$ and $\beta$ in $\slf$, we have 
\begin{equation}
\alpha\beta=\alpha\times\beta+\li\alpha,\beta\ri\id_2,    \label{prodinslf}
\end{equation}
where $\alpha\times\beta=\tfrac{1}{2}(\alpha\beta-\beta\alpha)$.

\subsection{The homogeneous nearly Kähler structure on \texorpdfstring{$\boldsymbol{\Sl\times\Sl}$}{SL(2,R)xSL(2,R)}}
Consider the triple product $\Sl\times\Sl\times\Sl$ with the product metric arising from $\li,\ri$ given in \eqref{prodsl2}. Let $\pi\colon\Sl \times\Sl\times\Sl\to\Sl\times\Sl$ be the submersion given by
\[
\pi (a,b,c)=(ac^{-1},bc^{-1}).    
\]
Let $g$ be the metric on $\Sl\times\Sl$ such that $\pi$ is a pseudo-Riemannian submersion. Then, $(\Sl\times\Sl,g)$ is a pseudo-Riemannian homogeneous manifold, expressed as
\[
\Sl\times\Sl=\frac{\Sl\times\Sl\times\Sl}{\Delta\Sl},    
\]
where $\Delta \Sl=\{(a,a,a):a\in\Sl\}$.

We define an almost complex structure $J$ on $\Sl\times\Sl$ by

\begin{equation}
    J(a\alpha,b\beta)=\frac{1}{\sqrt{3}} (a(\alpha-2\beta),b(2\alpha-\beta)),    \label{defJ}
\end{equation}
for $\alpha,\beta\in\mathfrak{sl}(2,\R)$. This almost complex structure satisfies
\begin{equation}
    J^2=-\id, \ \ \ \ \ \ \ \ \ \ \ \ \ g(X,Y)=\li X,Y\ri+\li JX,JY\ri,\label{gcompatiblewithJ}
\end{equation}
where $\li,\ri$ the product metric of $\Sl\times\Sl$ associated to the metric given in (\ref{prodsl2}).
The last equality implies that $J$ is compatible with $g$. 
Hence we may say that $(\Sl\times\Sl,g,J)$ is an almost Hermitian manifold.
Using \eqref{gcompatiblewithJ}, we get the following explicit expression for $g$:
\begin{equation*}
    g((a\alpha,b\beta),(a\gamma,b\delta))=\frac{2}{3}\langle(a\alpha,b\beta),(a\gamma,b\delta)\rangle-\frac{1}{3}\langle(a\beta,b\alpha),(a\gamma,b\delta)\rangle,
\end{equation*}
with $\alpha,\beta,\gamma,\delta\in\mathfrak{sl}(2,\R) $.

We denote by $\tilde{\nabla}$ the Levi-Civita connection on $\Sl\times\Sl$ associated with the pseudo-nearly Kähler metric $g$. 
We denote the covariant derivative of $J$ by $G$, which turns out to be skew symmetric.  
Hence, $(\Sl\times\Sl,g,J)$ is a pseudo-nearly Kähler manifold.
As such, $G$ satisfies
\begin{equation}
    g(G(X,Y),Z)+g(G(X,Z),Y)=0, \ \ \ \ \ G(X,JY)+JG(X,Y)=0,\label{nkprop}
\end{equation}
where
$X$, $Y$ and $Z$ are vector fields on $\Sl\times\Sl$.
These equations imply that
\begin{equation}
    g(G(X,Y),JZ)+g(G(X,Z),JY)=0.
    \label{gnormal}
\end{equation}

Consider now the almost product structure $P$ on $\Sl\times\Sl$ given by
\begin{equation}
    P(a\alpha,b\beta)=(a\beta,b\alpha). \label{prodstructuredef}
\end{equation}
The tensor $P$ has the following properties:
\begin{equation}
    \begin{aligned}
        &P^2=\id, && g(PX,PY)=g(X,Y),\\
        &PJ=-JP, && g(PX,Y)=g(X,PY),\\
        &PG(X,Y)+G(PX,PY)=0. &&
    \end{aligned}\label{proppe}
\end{equation}
for any pair of vector fields $X,Y$ on $\Sl\times\Sl$.
The covariant derivative of $P$ can be expressed in terms of $P$, $J$ and $G$ in the following way:
\begin{equation}
(\Tilde{\nabla}_XP)Y=\frac{1}{2}(JG(X,PY)+JPG(X,Y)). \label{nablap}
\end{equation}

Six-dimensional nearly Kähler manifolds carry a distinguished constant, known as the \textit{type}. In particular, $(\Sl\times\Sl,g,J)$ has type $-\tfrac{2}{3}$. Namely, $G$ satisfies the formula
\begin{equation}
    \begin{split}
     g(G(X,Y),G(Z,W))=-\tfrac{2}{3}\big(&g(X,Z)g(Y,W)-g(X,W)g(Y,Z)\\
     &+g(JX,Z)g(Y,JW)-g(JX,W)g(Y,JZ)\big).
    \end{split}
    \label{constanttype}
    \end{equation}

The curvature tensor $\tilde{R}$ of $\Sl\times\Sl$ associated to the Levi-Civita connection $\tilde{\nabla}$ of the pseudo-nearly Kähler metric $g$  is given by
\begin{equation}
        \begin{split}
            \Tilde{R}(U,V)W=-\tfrac{5}{6}\Big(&g(V,W)U-g(U,W)V\Big)\\
            -\tfrac{1}{6}\Big(&g(JV,W)JU-g(JU,W)JV-2g(JU,V)JW\Big)\\
            -\tfrac{2}{3}\Big(&g(PV,W)PU-g(PU,W)PV\\
            &+g(JPV,W)JPU-g(JPU,W)JPV\Big).\label{curv}
        \end{split}
\end{equation}

\subsection{The manifold \texorpdfstring{$\boldsymbol{\Sl\times\Sl}$}{SL(2,R)xSL(2,R)} as a pseudo-Riemannian product}
Take the product metric $\li,\ri$ on $\Sl\times\Sl$ arising from the Lorentzian metric on $\Sl$ given in (\ref{prodsl2}). The nearly Kähler metric $g$ and $\li,\ri$ are related by
\begin{equation}
    \li X,Y\ri=2 g(X,Y)+g(X,PY), \label{prodmetric}
\end{equation}
Products of (pseudo-)Riemannian manifolds carry a canonical almost product structure $Q$ compatible with the product metric, given by
\[
Q(X_1,X_2)=(-X_1,X_2).
\]
Similar to the relation between the product metric and the nearly Kähler metric, $P$ and $Q$ are related by
\begin{equation}
    QX=-\frac{1}{\sqrt{3}}(2PJX-JX).\label{prodQ}
\end{equation}

Consider the immersion of $\Sl\times\Sl$ with the product metric $\li,\ri$ into $M(2,\R)\times M(2,\R)\cong\R^8_4$.
Let $D$ be the Euclidean connection of $\R^8_4$. The Gauss formula splits $D$ into tangent and normal parts:
\[
D_XY=\nabla^E_XY+h^E(X,Y),
\]
with $X,Y$ vector fields on $\Sl\times\Sl$. The connection $\nabla^E$ is the Levi-Civita connection associated to the product metric $\li,\ri$ and $h^E$ is the so-called second fundamental form.
We have an expression for $\nabla^E$ in terms of the connection $\tilde\nabla$ associated to $g$, $J,P$ and $G$:
\begin{equation}
    \nabla^E_XY=\tilde{\nabla}_XY+\frac{1}{2}(JG(X,PY)+JG(Y,PX)).\label{relprodkal}
\end{equation}
For $(a,b)\in\Sl\times\Sl$ we have
\begin{equation*}
\begin{split}
        h^E(X,Y)_{(a,b)}=\frac{1}{2}\langle X,Y\rangle(a,b)+\frac{1}{2}\langle Y,QX\rangle(-a,b).
\end{split}
\end{equation*}
Hence,
\begin{equation}
\begin{split}
        D_XY&=\nabla_X^EY+\frac{1}{2}\langle X,Y\rangle(a,b)+\frac{1}{2}\langle Y,QX\rangle(-a,b). \label{connectionr8}
\end{split}
\end{equation}

\subsection{The isometry group}\label{seciso}

The connected component of the identity of the isometry group of $\Sl\times\Sl$ is 
\[\isoo(\Sl\times\Sl)=\Sl\times\Sl\times\Sl,\] 
where an element $\phi_{(a,b,c)}$ acts on a point $(p,q)$ by $\phi_{(a,b,c)}(p,q)=(apc^{-1},bqc^{-1})$.

The isometries $\phi_{(a,b,c)}$ preserve $P$ and $J$, in the sense that $d\phi_{(a,b,c)}\circ J=J\circ d\phi_{(a,b,c)}$ and $d\phi_{(a,b,c)}\circ P=P\circ d\phi_{(a,b,c)}$. 
These isometries are not the only ones that satisfy these properties. 
Given three matrices $a$, $b$ and $c$ with determinant $-1$, the map $(p,q)\mapsto (apc^{-1},bqc^{-1})$ is also an isometry that preserves $J$ and $P$.  

Denote by $\mathrm{SL}^{\pm}(2,\R)$ the group of all matrices in $M(2,\R)$ with determinant $\pm1$.
We can write any matrix of $\mathrm{SL}^{\pm}(2,\R)$ as $\ii^k a$, where $\ii$ is the matrix given in~\eqref{ijksl2R}, $k\in\{0,1\}$ and $a\in\Sl$. 
Thus, we have  $\left(\Sl\times\Sl\times\Sl\right)\rtimes\Z_2\subset\iso(\Sl\times\Sl)$. 

Permutations of elements of $\Sl\times\Sl\times\Sl$ also give rise to isometries of the pseudo-nearly Kähler $\Sl\times\Sl$:
\begin{equation}
    \label{isoslsl}
    \begin{alignedat}{2}
        &\Psi_{0,0}(p,q)=(p,q), 
        &&\Psi_{1,0}(p,q)=(q,p),\\
        &\Psi_{0,2\pi/3}(p,q)=(p q^{-1},q^{-1}),
        &&\Psi_{1,2\pi/3}(p,q)=(q^{-1},p q^{-1}),\\
        &\Psi_{0,4\pi/3}(p,q)=(q p^{-1},p^{-1}),\qquad\qquad
        &&\Psi_{1,4\pi/3}(p,q)=(p^{-1},q p^{-1}).
    \end{alignedat}
\end{equation}
These isometries are not included in $\Sl\times\Sl\times\Sl$. Moreover, each one of these is in a different connected component of $\iso(\Sl\times\Sl)$ and satisfies
\[
    J \circ d\Psi_{\kappa,\tau}=(-1)^\kappa  d\Psi_{\kappa,\tau}\circ J,
\ \ \ \ \ \ \ P\circ d \Psi_{\kappa,\tau}=d\Psi_{\kappa,\tau}\circ(\cos\tau P+\sin \tau J P).
\]
Later on, we prove that these are \textit{all} the isometries of the nearly Kähler $\Sl\times\Sl$.

A key result in the classification of Riemannian homogeneous nearly Kähler manifolds by Butruille \cite{butruille} is the existence of a unique nearly Kähler structure on $\Ss^3\times\Ss^3$.
Consequently, the almost complex structure on $\Ss^3\times\Ss^3$ is unique up to sign.

However, in \cite{Schafer} it is shown that $\Sl\times\Sl$ has a unique \textit{left invariant} nearly Kähler structure, which does not necessarily imply that $J$ is unique up to sign.
Therefore, by isometry group of the pseudo-nearly Kähler $\Sl\times\Sl$ we mean the set of all diffeomorphisms preserving the almost Hermitian strucuture. 
That is, those isometries $\mathcal{F}$ of $(\Sl\times\Sl,g)$ that preserve $J$, i.e.\ $
\mathcal{F}_* J=\pm J\mathcal{F}_*$.

\begin{lemma}\label{threeps}
     Any almost product structure $\tilde{P}$ on $\Sl\times\Sl$ that satisfies all properties in \eqref{proppe} and \eqref{curv} is given by
    \begin{equation}
    \tilde{P}=\cos \eta P+\sin \eta JP, \label{ptilde}
    \end{equation}
    where $\eta$ is equal to $\tfrac{2\pi}{3}$ or $\tfrac{4\pi}{3}$ and $P$ is given in \eqref{prodstructuredef}.
    Conversely, if $\tilde{P}$ is given by \eqref{ptilde}, then it satisfies \eqref{proppe} and \eqref{curv}. Moreover, it also satisfies \eqref{nablap}.
\end{lemma}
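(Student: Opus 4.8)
The plan is to handle both implications through the auxiliary operator $S=\tilde P P$, after first recording the purely algebraic facts that hold for every $\eta$. Note that $JP$ is symmetric, since $(JP)^\ast=P^\ast J^\ast=-PJ=JP$ using $PJ=-JP$ and the skew-adjointness of $J$ from \eqref{gcompatiblewithJ}; moreover $P^2=(JP)^2=\id$ and $P(JP)+(JP)P=0$, so $\cos\eta\,P+\sin\eta\,JP$ is a symmetric, $g$-orthogonal involution anticommuting with $J$ for all $\eta$. Thus the first four identities of \eqref{proppe} are automatic, and the content of the lemma is to locate $\eta$ via the $G$-compatibility (last line of \eqref{proppe}) and via \eqref{curv}.

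For the converse I would substitute $\tilde P=\cos\eta\,P+\sin\eta\,JP$ into the $G$-identity. Using $G(PX,PY)=-PG(X,Y)$ from \eqref{proppe} together with $G(JX,Y)=G(X,JY)=-JG(X,Y)$ and $G(JX,JY)=-G(X,Y)$ from \eqref{nkprop}, a direct expansion gives $G(\tilde P X,\tilde P Y)=-\cos(2\eta)\,PG(X,Y)+\sin(2\eta)\,JPG(X,Y)$, so $\tilde P G(X,Y)+G(\tilde P X,\tilde P Y)=0$ collapses to $\cos\eta-\cos 2\eta=0$ and $\sin\eta+\sin 2\eta=0$, whose common solutions are $\eta\in\{0,\tfrac{2\pi}{3},\tfrac{4\pi}{3}\}$, the value $\eta=0$ returning $P$. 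For \eqref{curv} I set $b_A(X,Y)=g(AX,Y)$ and observe $g(JAX,Y)=-b_A(X,JY)$, so the $P$-dependent part of \eqref{curv}, paired with a test vector, equals $\tfrac23\,g^{\Lambda}\!\big((\Lambda^2 P+\Lambda^2(JP))(U\wedge V),W\wedge S\big)$, where $g^\Lambda$ is the induced form on $\Lambda^2$. Since $J\tilde P=-\sin\eta\,P+\cos\eta\,JP$, expanding $\Lambda^2$ yields $\Lambda^2\tilde P+\Lambda^2(J\tilde P)=\Lambda^2 P+\Lambda^2(JP)$ for every $\eta$; hence the $P$-term of \eqref{curv} is unchanged and \eqref{curv} holds. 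Finally \eqref{nablap} follows by writing $\tilde\nabla_X\tilde P=\cos\eta\,\tilde\nabla_X P+\sin\eta\big((\tilde\nabla_X J)P+J\,\tilde\nabla_X P\big)$, inserting \eqref{nablap} for $\tilde\nabla P$ and $\tilde\nabla_X J=G(X,\cdot)$, and simplifying with the same $G$-identities.

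For the direct implication, since \eqref{curv} holds for both $\tilde P$ and $P$ and computes the single curvature tensor $\tilde R$, subtracting gives $\Lambda^2\tilde P+\Lambda^2(J\tilde P)=\Lambda^2 P+\Lambda^2(JP)$ on $\Lambda^2 T_p(\Sl\times\Sl)$. Because $\tilde P,P$ are $g$-orthogonal and anticommute with $J$, the operator $S=\tilde P P$ is $g$-orthogonal and commutes with $J$; using multiplicativity $\Lambda^2(SP)=\Lambda^2 S\,\Lambda^2 P$ the curvature identity becomes $\Lambda^2 S\cdot\mathsf M=\mathsf M$, where $\mathsf M=\Lambda^2 P+\Lambda^2(JP)$. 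Independently, applying the $G$-identity of \eqref{proppe} to both $\tilde P$ and $P$ shows $S$ is an automorphism of the nearly Kähler product, $G(SX,SY)=S\,G(X,Y)$.

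The heart of the argument, and the step I expect to be the main obstacle, is to deduce from $\Lambda^2 S\cdot\mathsf M=\mathsf M$ that $S$ lies in the centre $\{\cos\eta\,\id+\sin\eta\,J\}$ of the (pseudo-)unitary stabilizer of the structure, rather than in the full automorphism group supplied by the $G$-condition. Concretely I would diagonalize $\mathsf M$ through the $\pm1$-eigenspaces of $P$ and of $JP$ (each a pair of $3$-planes interchanged by $J$), identify its range, and show that a $g$-orthogonal, $J$-commuting $S$ fixing that range must equal $e^{\eta J}=\cos\eta\,\id+\sin\eta\,J$; by homogeneity this can be reduced to a single computation at $(\id_2,\id_2)$ in the split-quaternion frame \eqref{frameXi}, turning it into finite linear algebra. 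Once $S=\cos\eta\,\id+\sin\eta\,J$ is established, $\tilde P=SP=\cos\eta\,P+\sin\eta\,JP$, and the $G$-compatibility computation from the converse forces $\eta\in\{0,\tfrac{2\pi}{3},\tfrac{4\pi}{3}\}$; discarding $\eta=0$, which recovers $P$, leaves exactly $\eta=\tfrac{2\pi}{3}$ and $\eta=\tfrac{4\pi}{3}$.
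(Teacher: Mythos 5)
Your converse half is correct and complete. The algebraic preliminaries ($JP$ symmetric, $\cos\eta\,P+\sin\eta\,JP$ automatically a symmetric, orthogonal involution anticommuting with $J$), the expansion $G(\tilde PX,\tilde PY)=-\cos 2\eta\,PG(X,Y)+\sin 2\eta\,JPG(X,Y)$ reducing the $G$-identity to $\cos\eta=\cos2\eta$, $\sin\eta=-\sin2\eta$, the identity $\Lambda^2\tilde P+\Lambda^2(J\tilde P)=\Lambda^2P+\Lambda^2(JP)$ making \eqref{curv} automatic, and the verification of \eqref{nablap} all check out (I confirmed the last computation; it holds for every $\eta$). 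Your solution set $\{0,\tfrac{2\pi}{3},\tfrac{4\pi}{3}\}$, rather than the $\{\tfrac{2\pi}{3},\tfrac{4\pi}{3}\}$ of the statement, is in fact the right one: $P$ itself satisfies all hypotheses, and this is exactly how the lemma is applied in the proof of Theorem \ref{groupofisometries1}. For context, the paper gives no argument of its own here --- it cites the proof of the Riemannian analogue in \cite{moruz_properties_2018} and asserts it carries over --- so a completed version of your argument would be a genuinely self-contained alternative.

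The direct half, however, contains a genuine gap, located exactly where you flag it. The reduction is sound: subtracting the two instances of \eqref{curv} and using nondegeneracy of the induced metric on $\Lambda^2$ gives $\Lambda^2S\cdot\mathsf{M}=\mathsf{M}$ with $S=\tilde PP$ orthogonal and $J$-commuting. But the assertion that this forces $S=\cos\eta\,\id+\sin\eta\,J$ --- by your own description the heart of the direct implication --- is only announced as a plan (``I would diagonalize $\mathsf{M}$\dots and show\dots''); none of that linear algebra is carried out, so the proof is incomplete as written. The claim is true and closes in a few lines, with less machinery than you budget: for any tangent vector $z$ one computes $\mathsf{M}(Pz\wedge PJz)=z\wedge Jz+JPPz\wedge JPPJz=2\,z\wedge Jz$, so every bivector $z\wedge Jz$ lies in the image of $\mathsf{M}$ and is therefore fixed by $\Lambda^2S$; since $SJ=JS$ this reads $Sz\wedge JSz=z\wedge Jz$. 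A nonzero decomposable bivector determines its plane, so $S$ preserves the complex line $\operatorname{span}_{\R}\{z,Jz\}$ for every $z$; a $J$-commuting endomorphism all of whose vectors are (complex) eigenvectors is $a\,\id+b\,J$ with $a,b$ real, and then $z\wedge Jz=Sz\wedge JSz=(a^2+b^2)\,z\wedge Jz$ forces $a^2+b^2=1$, i.e.\ $S=\cos\eta\,\id+\sin\eta\,J$ pointwise. Note that neither the orthogonality of $S$ nor the $G$-equivariance $G(SX,SY)=SG(X,Y)$ is needed for this step; the $G$-identity then restricts $\eta$ at each point to $\{0,\tfrac{2\pi}{3},\tfrac{4\pi}{3}\}$ exactly as in your converse, and continuity of $\tilde P$ makes $\eta$ constant on the connected manifold --- a final point (pointwise $\eta$ versus constant $\eta$) that your sketch also leaves implicit.
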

\begin{proof}
    A similar result was proven for the Riemannian analogue $\Ss^3\times\Ss^3$ in \cite{moruz_properties_2018}. We can follow the same proof for $\Sl\times\Sl$.
\end{proof}
The following lemma is a well known result.
\begin{lemma}\label{lemmasl2requaltoso21}
    Let $\{\alpha_1,\alpha_2,\alpha_3\}$ and $\{\beta_1,\beta_2,\beta_3\}$ be bases of $\slf$. If $\li\alpha_i,\alpha_j\ri=\li \beta_i,\beta_j\ri$ for all $i$, $j\in \{1,2,3\}$, then there exists a matrix $c$ in $\mathrm{SL}^{\pm}(2,\R)$ such that $c\alpha_i c^{-1}=\beta_i$. In other words, $\mathrm{SL}^{\pm}(2,\R)/\Z_2$ is isomorphic to $\SO(2,1)$.
\end{lemma}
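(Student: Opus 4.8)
The plan is to recognize this as the standard isomorphism between $\mathrm{SL}^{\pm}(2,\R)/\Z_2$ and $\SO(2,1)$ induced by the conjugation (adjoint) action, and then to read the first assertion off the surjectivity of that map. First I would introduce the homomorphism $\operatorname{Ad}\colon\mathrm{SL}^{\pm}(2,\R)\to\operatorname{GL}(\slf)$ given by $\operatorname{Ad}_c(\alpha)=c\alpha c^{-1}$. On $\slf$ the form is $\li\alpha,\beta\ri=\tfrac12\operatorname{Trace}(\alpha\beta)$, and since the trace is conjugation-invariant each $\operatorname{Ad}_c$ preserves $\li,\ri$; as the frame $\{\ii,\jj,\kk\}$ shows that $(\slf,\li,\ri)$ has signature $(2,1)$, identifying it with $\R^3_1$ gives $\operatorname{Ad}_c\in\mathrm{O}(2,1)$. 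Thus the entire statement reduces to computing the kernel and image of $\operatorname{Ad}$.

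For the kernel, $\operatorname{Ad}_c=\id$ forces $c$ to centralize all of $\slf$, hence to be scalar, and the only scalar matrices with determinant $\pm1$ are $\pm\id_2$, so $\ker\operatorname{Ad}=\{\pm\id_2\}=\Z_2$. For the image, I would observe that the differential $\operatorname{ad}\colon\slf\to\mathfrak{so}(2,1)$ is injective (the center of $\slf$ is trivial) between spaces of equal dimension $3$, hence an isomorphism; therefore $\operatorname{Ad}$ is a local diffeomorphism with open image. An open subgroup is also closed, so the image is a union of connected components of $\mathrm{O}(2,1)$. Since $\Sl$ is connected, $\operatorname{Ad}(\Sl)$ is the identity component $\SO_0(2,1)\cong\Psl$; and on the determinant-$(-1)$ component the value $\operatorname{Ad}_{\ii}=\operatorname{diag}(1,-1,-1)$ (in the basis $\{\ii,\jj,\kk\}$) has determinant $+1$ but reverses the timelike direction, landing in the non-identity component of $\SO(2,1)$. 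Hence the image is exactly $\SO(2,1)$, and $\operatorname{Ad}$ descends to the asserted isomorphism $\mathrm{SL}^{\pm}(2,\R)/\Z_2\cong\SO(2,1)$.

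Finally I would deduce the existence of $c$: the matching Gram matrices give a unique linear $T$ with $T\alpha_i=\beta_i$ that preserves $\li,\ri$, so $T\in\mathrm{O}(2,1)$, and when $T\in\SO(2,1)$ surjectivity yields $c$ with $\operatorname{Ad}_c=T$, that is $c\alpha_i c^{-1}=\beta_i$. The step I expect to be the real obstacle is precisely the determinant appearing here: conjugation realizes only the orientation-preserving isometries $\SO(2,1)$ and never the orientation-reversing ones of $\mathrm{O}(2,1)\setminus\SO(2,1)$ (for instance $\ii,\jj,\kk\mapsto\ii,\jj,-\kk$ cannot arise, since any $c$ commuting with both $\ii$ and $\jj$ is scalar and fixes $\kk$). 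The honest route is therefore to prove the group isomorphism first, with the surjectivity of $\operatorname{Ad}$ onto $\SO_0(2,1)$ as the analytic heart, secured by the open-subgroup argument above (or, alternatively, by realizing the boosts and rotations generating $\SO_0(2,1)$ through explicit conjugations), and then to obtain the first assertion as a corollary, reading the hypothesis as furnishing a frame $\{\beta_i\}$ equi-oriented with $\{\alpha_i\}$ so that indeed $T\in\SO(2,1)$.
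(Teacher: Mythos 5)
The paper does not prove this lemma at all: it is introduced with the sentence ``The following lemma is a well known result'' and no argument is given. So your proposal is not an alternative to the paper's proof --- it supplies the missing one, and the group-theoretic part of it is correct. On $\slf$ the form \eqref{prodsl2} does reduce to $\li\alpha,\beta\ri=\tfrac12\operatorname{Trace}(\alpha\beta)$ (since $\operatorname{adj}(\alpha)=-\alpha$ for traceless $\alpha$), so each $\operatorname{Ad}_c$ lies in $\mathrm{O}(2,1)$; the kernel consists of the scalar matrices of determinant $\pm1$, i.e.\ $\{\pm\id_2\}$; the isomorphism $\operatorname{ad}\colon\slf\to\mathfrak{so}(2,1)$ makes $\operatorname{Ad}$ a local diffeomorphism, so its image is an open, hence closed, subgroup, i.e.\ a union of components of $\mathrm{O}(2,1)$; and the bookkeeping $\operatorname{Ad}(\Sl)=\SO_0(2,1)\cong\Psl$ together with $\operatorname{Ad}_{\ii}=\operatorname{diag}(1,-1,-1)$ (determinant $+1$, time-reversing) identifies the image as exactly $\SO(2,1)$.

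Your closing observation, however, should be promoted from a caveat to a correction: the first assertion of the lemma, \emph{as printed}, is false, and your counterexample proves it. For $(\alpha_1,\alpha_2,\alpha_3)=(\ii,\jj,\kk)$ and $(\beta_1,\beta_2,\beta_3)=(\ii,\jj,-\kk)$ the Gram matrices coincide, yet any $c$ commuting with $\ii$ and $\jj$ is scalar and hence fixes $\kk$, so no $c\in\mathrm{SL}^{\pm}(2,\R)$ can exist. More structurally: every $T\in\mathrm{O}(2,1)$ is the transition map of some pair of bases with equal Gram matrices, so the first sentence of the lemma asserts that $\operatorname{Ad}$ surjects onto $\mathrm{O}(2,1)$; this is impossible, because the image of $\operatorname{Ad}$ is a clopen subgroup with at most two components while $\mathrm{O}(2,1)$ has four. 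In particular the phrase ``in other words'' is inaccurate --- the two sentences of the lemma are not equivalent, and only the second one is true. The correct statement is the one your proof actually establishes: the conclusion holds exactly when the two bases are equi-oriented, i.e.\ when the transition map lies in $\SO(2,1)$ rather than $\mathrm{O}(2,1)$. This is not a pedantic point for the paper, since both places where the lemma is invoked (the proof of Theorem \ref{groupofisometries1} and the initial conditions in Proposition \ref{psl}) apply it to frames whose orientation must then be pinned down; in both cases this can be done because the maps and frames involved respect $G$, which encodes the cross product and hence the orientation of $\slf$ (compare the relations in \eqref{tabla}), but that verification appears nowhere in the paper and is genuinely needed once the lemma is stated correctly.
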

With these lemmas we prove the following statement.
\begingroup
\def\thetheorem{\ref{groupofisometries1}}

\begin{theorem}
    The isometry group of the pseudo-nearly Kähler $\Sl\times\Sl$ is the semi-direct product $\big(\Sl\times\Sl\times\Sl\big)\rtimes\big(\Z_2\times S_3\big)$, where $S_3$ is the symmetric group of order 6 generated by $\{\Psi_{1,0},\Psi_{1,4\pi/3}\}$
\end{theorem}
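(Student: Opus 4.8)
The plan is to show that every element $\mathcal{F}$ of the isometry group, i.e.\ every isometry of $(\Sl\times\Sl,g)$ with $\mathcal{F}_*J=\epsilon J\mathcal{F}_*$ for some $\epsilon=\pm1$, can be written as the product of an element of $(\Sl\times\Sl\times\Sl)\rtimes\Z_2$ with one of the six maps $\Psi_{\kappa,\tau}$ of \eqref{isoslsl}; the reverse inclusion and the fact that the $\Psi_{\kappa,\tau}$ realize $S_3$ are essentially recorded in Subsection~\ref{seciso}. The engine is Lemma~\ref{threeps}. Given $\mathcal{F}$, set $\tilde P=\mathcal{F}_*^{-1}\circ P\circ\mathcal{F}_*$. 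I would first check that $\tilde P$ satisfies all relations in \eqref{proppe}: the identities $\tilde P^2=\id$, $g(\tilde P\,\cdot,\tilde P\,\cdot)=g$, $g(\tilde P\,\cdot,\cdot)=g(\cdot,\tilde P\,\cdot)$ and $\tilde PJ=-J\tilde P$ are immediate from $\mathcal{F}_*^{-1}J\mathcal{F}_*=\epsilon J$ and $\mathcal{F}$ being an isometry, while the last relation of \eqref{proppe} follows because $\mathcal{F}$ preserves $G=\tilde\nabla J$ up to the same sign $\epsilon$, so the two factors of $\epsilon$ cancel. Likewise, since an isometry preserves $\tilde R$ and $J$ enters \eqref{curv} only through even powers (whereas each $P$-term is cubic in the structures, one factor of $P$ per summand), substituting \eqref{curv} into $\tilde R=\mathcal{F}_*^{-1}\tilde R(\mathcal{F}_*\cdot,\mathcal{F}_*\cdot)\mathcal{F}_*$ shows that $\tilde P$ also satisfies \eqref{curv}. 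By Lemma~\ref{threeps}, $\tilde P=\cos\eta\,P+\sin\eta\,JP=:P_\eta$ for some $\eta\in\{0,\tfrac{2\pi}{3},\tfrac{4\pi}{3}\}$.

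Next I would use the maps $\Psi_{\kappa,\tau}$ to normalize $\mathcal{F}$. From $J\,d\Psi_{\kappa,\tau}=(-1)^\kappa d\Psi_{\kappa,\tau}J$ and $P\,d\Psi_{\kappa,\tau}=d\Psi_{\kappa,\tau}(\cos\tau\,P+\sin\tau\,JP)$, a short computation using the addition formulas yields $P_\eta\,d\Psi_{\kappa,\tau}=d\Psi_{\kappa,\tau}\,P_{(-1)^\kappa\eta+\tau}$. Hence, for $\mathcal{F}_1:=\mathcal{F}\circ\Psi_{\kappa,\tau}$ one finds $J\,d\mathcal{F}_1=\epsilon(-1)^\kappa d\mathcal{F}_1 J$ and $P\,d\mathcal{F}_1=d\mathcal{F}_1\,P_{(-1)^\kappa\eta+\tau}$. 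Choosing $\kappa$ with $(-1)^\kappa=\epsilon$ and then $\tau\equiv-(-1)^\kappa\eta\pmod{2\pi}$, which forces $\tau\in\{0,\tfrac{2\pi}{3},\tfrac{4\pi}{3}\}$ and hence is always admissible, we obtain $d\mathcal{F}_1\circ J=J\circ d\mathcal{F}_1$ and $d\mathcal{F}_1\circ P=P\circ d\mathcal{F}_1$. This reduces the theorem to identifying all isometries commuting with both $J$ and $P$.

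For such an $\mathcal{F}_1$ I would precompose with an element of $\Sl\times\Sl\times\Sl$ to arrange $\mathcal{F}_1(o)=o$, where $o=(\id_2,\id_2)$; this is harmless since $\Sl\times\Sl\times\Sl$ acts transitively and preserves $J$ and $P$. Identifying $T_o(\Sl\times\Sl)\cong\slf\oplus\slf$, the hypothesis that $\mathcal{F}_1$ commutes with $P$ forces its differential to preserve the $(+1)$-eigenspace $V_+=\{(\alpha,\alpha):\alpha\in\slf\}$ of $P$, and since it commutes with $J$ (which maps $V_+$ isomorphically onto the $(-1)$-eigenspace) the whole of $d\mathcal{F}_1|_o$ is determined by its restriction $A$ to $V_+$. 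A direct computation gives $g|_{V_+}=\tfrac{2}{3}\li\,,\ri$, so $A$ preserves $\li\,,\ri$ on $\slf$, i.e.\ $A\in\mathrm O(2,1)$; moreover one checks that the mixed pairing $g(J\cdot,\cdot)$ of $V_+$ with itself vanishes, so at the purely linear level $(g,J,P)$ impose \emph{no} further restriction and every $A\in\mathrm O(2,1)$ occurs. The main obstacle is to cut this down to the correct component group: here I use that $\mathcal{F}_1$, being an isometry commuting with $J$, also preserves $G$, and that $G_o$ restricted to $V_+$ recovers the cross product $\times$ (equivalently the Lie bracket) of $\slf$ up to a nonzero constant. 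A linear map preserving both $\li\,,\ri$ and $\times$ preserves the associated volume form and hence is a Lie algebra automorphism, so $A\in\operatorname{Aut}(\slf)=\SO(2,1)$. By Lemma~\ref{lemmasl2requaltoso21}, $\operatorname{Aut}(\slf)=\SO(2,1)$ is exactly $\operatorname{Ad}(\mathrm{SL}^{\pm}(2,\R))$, so $A=\operatorname{Ad}(c)$ for some $c\in\mathrm{SL}^{\pm}(2,\R)$, realized by the isotropy element $\phi_{(c,c,c)}\in(\Sl\times\Sl\times\Sl)\rtimes\Z_2$. Since an isometry of a connected pseudo-Riemannian manifold is determined by its $1$-jet at a point, and $\mathcal{F}_1$ and $\phi_{(c,c,c)}$ share value and differential at $o$ (they agree on $V_+$, hence on $V_-=JV_+$), they coincide; tracing the reductions back gives $\mathcal{F}\in[(\Sl\times\Sl\times\Sl)\rtimes\Z_2]\cdot\{\Psi_{\kappa,\tau}\}$.

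Finally I would assemble the group structure. The factor $\Sl\times\Sl\times\Sl=\isoo(\Sl\times\Sl)$ is the identity component, hence normal. It then remains to verify that the discrete part is $\Z_2\times S_3$: the $\Z_2$ generated by the determinant $-1$ extension and the $S_3$ generated by $\{\Psi_{1,0},\Psi_{1,4\pi/3}\}$ each intersect $\Sl\times\Sl\times\Sl$ trivially, commute with one another, and together exhaust the twelve cosets, so that $\iso(\Sl\times\Sl)=(\Sl\times\Sl\times\Sl)\rtimes(\Z_2\times S_3)$. I expect the genuinely delicate point to be the component-group reduction of the previous paragraph, namely that preserving $G$, and not merely $g$, $J$ and $P$, is what forces $A\in\SO(2,1)$ rather than the full $\mathrm O(2,1)$; this is precisely what rules out spurious isometries.
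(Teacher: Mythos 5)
Your proof is correct, and its skeleton coincides with the paper's: both arguments use Lemma~\ref{threeps} to normalize $\mathcal{F}$ by one of the maps $\Psi_{\kappa,\tau}$ of \eqref{isoslsl} so that it commutes with $J$ and $P$, then compose with an element of $\Sl\times\Sl\times\Sl$ to fix $(\id_2,\id_2)$, reduce to linear algebra on the tangent space there, and conclude by the $1$-jet rigidity of isometries. The genuine difference lies in the linear step, and it is substantive. The paper shows that the normalized differential preserves the factor $\{(\alpha,0):\alpha\in\slf\}$ and then invokes Lemma~\ref{lemmasl2requaltoso21} directly to produce $c$ with $c\alpha_i c^{-1}=\beta_i$; you instead work on the $(+1)$-eigenspace $V_+$ of $P$ and, before applying any conjugation lemma, use preservation of $G$ to show that the induced map lies in $\SO(2,1)=\operatorname{Aut}(\slf)$ rather than merely in $\mathrm{O}(2,1)$. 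This extra step is not cosmetic: Lemma~\ref{lemmasl2requaltoso21} as literally stated is too strong, because $\operatorname{Ad}\colon\mathrm{SL}^{\pm}(2,\R)\to\mathrm{O}(2,1)$ has image exactly $\SO(2,1)$. For instance, the bases $(\ii,\jj,\kk)$ and $(\ii,\jj,-\kk)$ have identical inner products but are not conjugate, since any $c$ commuting with $\ii$ and $\jj$ also commutes with $\ii\jj=\kk$. Consequently the paper's argument, read literally, does not by itself exclude isometries whose differential induces an element of $\mathrm{O}(2,1)\setminus\SO(2,1)$ on the factor; your $G$-preservation argument (equivalently, an orientation count, since preserving $\li\,,\ri$ and the cross product forces determinant $+1$) is precisely what closes that gap, and you correctly identify it as the delicate point. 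The only blemishes are cosmetic: your parenthetical description of the $P$-block of \eqref{curv} is off (each summand there contains two factors of $P$ and an even number of factors of $J$, which is what makes the signs $\epsilon$ cancel), and strictly speaking it is $JG_o$, not $G_o$, that maps $V_+\times V_+$ back into $V_+$ as a multiple of the cross product; neither affects the argument. In short, your route buys a sharper, self-contained treatment of the component group where the paper leans on an overstated auxiliary lemma, in exchange for somewhat longer bookkeeping with the eigenspace decomposition and the final verification of the $\Z_2\times S_3$ structure, which the paper (and Subsection~\ref{seciso}) largely leaves implicit.
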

\addtocounter{theorem}{-1}
\endgroup

\begin{remark}
    An element $(a,b,c,\Psi,k)$ acts on a point $(p,q)$ by
    \[
        (a,b,c,\Psi,k)\cdot(p,q)=\Psi\circ\phi_{\ii^k( a, b, c)}(p,q).
    \]
\end{remark}
\begin{proof}[Proof of Theorem \ref{groupofisometries1}]
    We already know that the given group is included in $\iso(\Sl\times\Sl)$. 
    Here we show the oposite inclusion.

    Let $\mathcal{F}$ be an isometry of the pseudo-nearly Kähler $\Sl\times\Sl$. 
    Thus, there exist $\kappa_0\in\{0,1\}$ satisfying
    \[
        \mathcal{F}_*J=(-1)^{\kappa_0}J\mathcal{F}_*.
    \]
    As $\mathcal{F}_*P(\mathcal{F}^{-1})_*$ is an almost product structure satisfying \eqref{proppe} and \eqref{curv},
    Lemma~\ref{threeps} implies that
    \[
        \mathcal{F}_*P(\mathcal{F}^{-1})_*=\cos \tau_0 P +\sin \tau_0 JP,
    \]
    for some $\tau_0\in\{0,\tfrac{2\pi}{3},\tfrac{4\pi}{3}\}$.
    By taking the composition $\mathcal{F}\circ \Psi_{\kappa_0,(-1)^{\kappa_0}\tau_0}$ we may assume that $\mathcal{F}$ preserves $P$ and $J$. 
    Let $(p_o,q_o)\in\Sl\times\Sl$ such that $\mathcal{F}(\id_2,\id_2)=(p_o,q_o)$. 
    Then by taking the composition $\mathcal{F}\circ\phi_{(p_o^{-1},q_{o}^{-1},\id_2)}$ we may also assume that $\mathcal{F}(\id_2,\id_2)=(\id_2,\id_2)$. 

    Let $\alpha\in\slf$. Then we write ${\mathcal{F}_*}_{(\id_2,\id_2)}(\alpha,0)=(\beta,\gamma)$. Since $\mathcal{F}$ preserves $P$ we know that ${\mathcal{F}_*}_{(\id_2,\id_2)}(0,\alpha)=(\gamma,\beta)$. We compute
    \begin{equation*}
        \begin{split}
            {\mathcal{F}_*}_{(\id_2,\id_2)}J(\alpha,0)&=\frac{1}{\sqrt{3}}{\mathcal{F}_*}_{(\id_2,\id_2)}(\alpha,2\alpha)\\
            &=\frac{1}{\sqrt{3}}{\mathcal{F}_*}_{(\id_2,\id_2)}(\alpha,0)+\frac{1}{\sqrt{3}}{\mathcal{F}_*}_{(\id_2,\id_2)}(0,2\alpha)\\
            &=\frac{1}{\sqrt{3}}(\beta,\gamma)+\frac{2}{\sqrt{3}}(\gamma,\beta)\\
            &=\frac{1}{\sqrt{3}}(\beta+2\gamma,2\beta+\gamma).
        \end{split}
    \end{equation*}
    On the other hand, as $\mathcal{F}$ preserves $J$, we have that ${\mathcal{F}_*}_{(\id_2,\id_2)}J(\alpha,0)$ equals 
    \begin{equation*}
        \begin{split}
            J{\mathcal{F}_*}_{(\id_2,\id_2)}(\alpha,0)&=J(\beta,\gamma)\\
            &=\frac{1}{\sqrt{3}}(\beta-2\gamma,2\beta-\gamma).
        \end{split}
    \end{equation*}
    Therefore, we obtain that $\gamma=0$. 
    Moreover, since $\mathcal{F}$ is an isometry, we deduce that ${\mathcal{F}_*}$ maps a set $\{(\alpha_1,0),(\alpha_2,0),(\alpha_3,0)\}$ to a set $\{(\beta_1,0),(\beta_2,0),(\beta_3,0)\}$ such that $\li \alpha_i,\alpha_j\ri=\li \beta_i,\beta_j\ri$.

    Now, using Lemma \ref{lemmasl2requaltoso21}, we may compose $\mathcal{F}$ with an isometry of $\Sl\times\Sl\times\Sl\rtimes\Z_2$ to assume that ${\mathcal{F}_*}_{(\id_2,\id_2)}(\alpha,0)=(\alpha,0)$ for all $\alpha\in\slf$. Since $\mathcal{F}$ preserves $P$, we have that
    \begin{equation*}
        \begin{split}
            {\mathcal{F}_*}_{(\id_2,\id_2)}(\alpha,\beta)&={\mathcal{F}_*}_{(\id_2,\id_2)}(\alpha,0)+{\mathcal{F}_*}_{(\id_2,\id_2)}(0,\beta)\\
                                            &=(\alpha,0)+P(\beta,0)\\
                                            &=(\alpha,\beta).\\
        \end{split}
    \end{equation*}

    Since isometries are determined by a point and the differential at that point, the argument above shows that $\mathcal{F}^{-1}$ is in $\big(\Sl\times\Sl\times\Sl\big)\rtimes\big(\Z_2\times S_3\big)$, hence $\mathcal{F}$ also belongs to this group.
\end{proof}

\section{Lagrangian submanifolds of \texorpdfstring{$\Sl\times\Sl$}{SL(2,R)xSL(2,R)}}\label{lagrangiansubmanifolds}
Let $f:M\to\Sl\times\Sl$ be a non-degenerate pseudo-Riemannian immersion. The Gauss formula relates the Levi-Civita connection on $M$ and the nearly Kähler connection $\tilde{\nabla}$ by
\begin{equation}
    \tilde{\nabla}_XY=\nabla_XY+h(X,Y),\label{gaussformula}
\end{equation}
where $X,Y$ are vector fields on $M$ and $h$ is a symmetric bilinear normal form called the second fundamental form. 
On the other hand, the Weingarten formula gives a relation between $\tilde{\nabla}$ and the normal connection:
\[
 \tilde{\nabla}_X\xi=-S_{\xi}X+\nabla_X^{\bot}\xi,
\]
where $X$ and $\xi$ are tangent and normal vector fields on $M$, respectively.
Given $\xi$ a normal vector field to $M$, the tensor $S_\xi$ is called the shape operator.
It is linear at $\xi$ and $X$ and symmetric (with respect to $g$), related to $h$ by $g(S_\xi X,Y)=g(h(X,Y),\xi)$.

A totally geodesic submanifold is a submanifold whose geodesics are also geodesics of $\Sl\times\Sl$. This is equivalent to a vanishing second fundamental form.

The most fundamental equations of submanifold theory are the Gauss and Codazzi equations, which give expressions for the tangent and normal parts of the curvature tensor, respectively. Namely, 
\begin{equation*}
    \begin{split}
        \left(\tilde{R}(X,Y)Z\right)^{\top}&=R(X,Y)Z+S_{h(X,Z)}Y-S_{h(Y,Z)}X,\\
        \left(\tilde{R}(X,Y)Z\right)^{\bot}&=(\overline{\nabla}_Xh)(Y,Z)-(\overline{\nabla}_Yh)(X,Z),
    \end{split}
\end{equation*}
where $(\overline{\nabla}_Xh)(Y,Z)=\nabla_X^{\bot}h(Y,Z)-h(\nabla_XY,Z)-h(Y,\nabla_XZ)$ and $\tilde{R},R$ are the curvature tensors of $\Sl\times\Sl$ and $M$, respectively.

Assume now that $M$ is a Lagrangian submanifold. That is, a non-degenerate three-dimensional submanifold such that $J$ maps tangent spaces of $M$ into normal spaces, and vice versa.

In \cite{Schafer} we find properties of the tensor $G$ of a pseudo-nearly Kähler manifold and the second fundamental form of a Lagrangian submanifold:
\begin{equation}
    \begin{split}
    g(G(X,Y),Z)&=0,\\
    g(h(X,Y),JZ)&=g(h(X,Z),JY).\\
    S_{JX}Y&=Jh(X,Y)
    \end{split}
    \label{lagrprop}
\end{equation}
Lagrangian submanifolds are particularly interesting, since their behavior with respect to the almost Hermitian structure, and because of the following property, also found in \cite{Schafer}:
\begin{proposition}
Any Lagrangian submanifold of a six-dimensional strictly pseudo-nearly Kähler  \hyphenation{ma-ni-fold} manifold is orientable and minimal. \label{minimal}
\end{proposition}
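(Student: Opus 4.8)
The plan is to prove the two assertions separately, in both cases feeding off the properties \eqref{nkprop}, \eqref{gnormal}, \eqref{constanttype} of the tensor $G$ together with the Lagrangian identities \eqref{lagrprop}. The unifying object is the tangential $3$-form $\Omega(X,Y,Z)=g(G(X,Y),JZ)$ on $M$, which will simultaneously furnish an orientation and encode the mean-curvature condition.

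For orientability I would first observe that $\Omega$ is alternating: skewness in $X,Y$ is the skew-symmetry of $G$, and skewness in $Y,Z$ is exactly \eqref{gnormal}; these two transpositions generate $S_3$. To see that $\Omega$ never vanishes, fix $p\in M$ and an orthonormal basis $\{e_1,e_2,e_3\}$ of $T_pM$ with $g(e_i,e_i)=\epsilon_i$. Since the three vectors are tangent, every term of \eqref{constanttype} containing a factor $g(J\cdot,\cdot)$ vanishes, so $g(G(e_1,e_2),G(e_1,e_2))=-\tfrac23\epsilon_1\epsilon_2\neq0$; thus $G(e_1,e_2)$ is a nonzero normal vector. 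Applying \eqref{gnormal} twice gives $G(e_1,e_2)\perp Je_1$ and $G(e_1,e_2)\perp Je_2$, so $G(e_1,e_2)=\lambda Je_3$ with $\lambda\neq0$, whence $\Omega(e_1,e_2,e_3)=\lambda\epsilon_3\neq0$. A nowhere-vanishing $3$-form orients the three-manifold $M$.

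For minimality the starting point is the first identity of \eqref{lagrprop}, namely $g(G(X,Y),Z)\equiv0$ for all tangent $X,Y,Z$. I would differentiate this identically vanishing function along a tangent field $W$, writing $\tilde\nabla_W(G(X,Y))=(\tilde\nabla_WG)(X,Y)+G(\tilde\nabla_WX,Y)+G(X,\tilde\nabla_WY)$ and splitting each $\tilde\nabla_W\,\cdot=\nabla_W\,\cdot+h(W,\cdot)$ through the Gauss formula \eqref{gaussformula}. Every surviving term in which $G$ is evaluated on two tangent vectors and paired with the tangent vector $Z$ vanishes again by \eqref{lagrprop}; the term $g((\tilde\nabla_WG)(X,Y),Z)$ likewise vanishes once one invokes the standard first-order nearly Kähler identity for $\tilde\nabla G$, whose every summand carries either a factor $g(J\cdot,\cdot)$ or a $J$ applied to a vector and hence dies on all-tangent inputs. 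Writing $h=JA$ with $A(W,X):=-Jh(W,X)$ tangent, and using $G(\cdot,J\cdot)=-JG(\cdot,\cdot)$, what remains is
\begin{equation*}
\Omega(A(W,X),Y,Z)+\Omega(X,A(W,Y),Z)+\Omega(X,Y,A(W,Z))=0 .
\end{equation*}
For each fixed $W$, the map $A_W\colon X\mapsto A(W,X)$ is a symmetric endomorphism of $T_pM$ with $g(A_WX,Y)=g(h(W,X),JY)$ fully symmetric, and on the three-dimensional space $(T_pM,\Omega)$ the displayed relation says precisely that $A_W$ is a derivation of the volume form $\Omega$, i.e. $\operatorname{tr}A_W=0$. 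Since $\operatorname{tr}A_W=\sum_i\epsilon_i\,g(A(W,e_i),e_i)=\sum_i\epsilon_i\,g(h(W,e_i),Je_i)=g(H,JW)$, where $H$ is the mean curvature vector, we get $g(H,JW)=0$ for all tangent $W$; as $J$ carries $T_pM$ isomorphically onto the normal space and $H$ is normal, this forces $H=0$.

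I expect the main obstacle to be the differentiation step: one must check carefully, using the precise first-order identity for $\tilde\nabla G$ in a strict nearly Kähler six-manifold and keeping strict track of the sign conventions in the Gauss and Weingarten formulas, that $g((\tilde\nabla_WG)(X,Y),Z)$ really drops out on all-tangent inputs, so that only the second-fundamental-form terms survive. Once that vanishing is secured, the conceptual heart—recognizing the resulting cubic identity as the vanishing trace of $A_W$ read through the volume form $\Omega$—is immediate, and the orientation comes from the very same form $\Omega$.
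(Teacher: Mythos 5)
The paper never proves Proposition \ref{minimal} itself: it is quoted from Schäfer's work, so there is no internal argument to compare yours against, and your proposal must be judged on its own. Judged so, it is correct, and it is essentially the classical Ejiri-type argument (orientability and minimality of totally real submanifolds of $\Ss^6$) transplanted to the pseudo-Riemannian setting, which is also the route of the cited source; the difference is that you make it self-contained from the identities the paper actually displays. The orientability half is complete as written: skew-symmetry of $G$ and \eqref{gnormal} make $\Omega=g(G(\cdot,\cdot),J\cdot)$ alternating, and \eqref{constanttype} together with \eqref{lagrprop} and \eqref{gnormal} forces $G(e_1,e_2)=\lambda Je_3$ with $\lambda^2\epsilon_3=-\tfrac{2}{3}\epsilon_1\epsilon_2\neq0$, so $\Omega$ is a volume form. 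The minimality half is also sound: after the Gauss splitting, all terms with three tangent entries die by the first identity of \eqref{lagrprop}; writing $h=JA$ and using \eqref{nkprop} turns the surviving terms into $\Omega(A_WX,Y,Z)+\Omega(X,A_WY,Z)+\Omega(X,Y,A_WZ)=(\operatorname{tr}A_W)\,\Omega(X,Y,Z)$; and the total symmetry of $g(h(\cdot,\cdot),J\cdot)$ (the second identity of \eqref{lagrprop}), which you use implicitly in the last step, identifies $\operatorname{tr}A_W$ with $3g(H,JW)$, whence $H=0$ by non-degeneracy of the normal bundle. Two caveats you anticipate but should make explicit. First, the vanishing of $g((\tilde\nabla_WG)(X,Y),Z)$ is not free: it needs the standard identity expressing $\tilde\nabla G$ algebraically in $g$, $J$ and the type constant (every term carrying either $J$ applied to one of $W,X,Y$ or a factor $g(J\cdot,\cdot)$), which this paper never states; it is a theorem of Schäfer--Semmelmann for six-dimensional strict pseudo-nearly Kähler manifolds, so your proof is self-contained only modulo that citation. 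Second, you use the specific type $-\tfrac{2}{3}$ of $\Sl\times\Sl$ from \eqref{constanttype}, whereas the proposition is stated for an arbitrary six-dimensional strict pseudo-nearly Kähler manifold; the argument is unchanged once one knows such a manifold has constant \emph{nonzero} type, which in the pseudo-Riemannian setting is a genuine theorem (a priori the torsion could be isotropic), again due to Schäfer--Semmelmann.
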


The tangent bundle of $\Sl\times\Sl$ splits into the tangent and normal bundles of a submanifold $M$. Moreover, if $M$ is Lagrangian, we have that $T\Sl\times\Sl=TM\oplus JTM$. 
Hence, there exist two endomorphisms $A,B:TM\rightarrow TM$ such that $P|_M=A+JB$, where $P$ is the almost product structure given in \eqref{prodstructuredef}. From Equation (\ref{proppe}) we deduce that $A$ and $B$ are symmetric with respect to $g$, commute with each other and $A^2+B^2=\id$. 
Then, the Gauss and Codazzi equations follow from (\ref{curv}) as:
\begin{equation}
    \begin{split}
        R( X,Y)Z&=-\tfrac{5}{6}\big(g(Y,Z) X-g( X,Z)Y\big)\\
            &\quad -\tfrac{2}{3}\big(g(AY,Z)A X-g(A X,Z)AY+g(BY,
            Z)B X\\
            &\quad-g(B X,Z)BY\big)-S_{h( X,Z)}Y+S_{h(Y,Z)} X,
            \label{Gauss}
    \end{split}
\end{equation}
\begin{equation}
\begin{split}
    (\overline{\nabla}_ X h)(Y,Z)-(\overline{\nabla}_Y h)( X,Z)&=-\tfrac{2}{3}\big(g(AY,Z)JB X-g(A X,Z)JBY\\
    &\quad-g(BY,Z)JA X+g(B X,Z)JAY\big).\label{Codazzi}
\end{split}
\end{equation}

A $\Delta_i$-orthonormal basis of $\R^3_1$ is a basis $\{e_1,e_2,e_3\}$ such that the matrix of inner products is given by $\Delta_i$, where
\begin{equation*}
\Delta_1=\begin{pmatrix}
    -1 & 0 & 0 \\
    0 & 1 & 0 \\
    0  & 0 & 1 \\
\end{pmatrix},\ \ \ \
\Delta_2=\begin{pmatrix}
    0 & 1 & 0 \\
    1 & 0 & 0 \\
    0  & 0 & 1 \\
\end{pmatrix},\ \ \ \ 
\Delta_3=\begin{pmatrix}
    1 & 0 & 0 \\
    0 & -1 & 0 \\
    0  & 0 & 1 \\
\end{pmatrix} .
\end{equation*}
Given a $\Delta_i$-orthonormal basis of $T_pM$ we can extend this concept to what we call a $\Delta_i$-orthonormal frame.

Given a $\Delta_i$-orthonormal frame $\{E_1,E_2,E_3\}$ on a Lagrangian submanifold $M$ of $\Sl\times\Sl$, by (\ref{lagrprop}) we have that $G(E_i,E_j)$ is a normal vector field. Moreover, by (\ref{gnormal}) and (\ref{constanttype}) we have that $JG(E_j,E_k)=\alpha E_l$ where $\alpha$ depends on $\Delta_i$, as indicated in the following table.
\begin{equation}
    \begin{tblr}{|c|c|c|c|}
\hline
      &   \Delta_1 &   \Delta_2  & \Delta_3  \\   
     \hline
     JG(E_1,E_2)  &  \sqrt{\frac{2}{3}}E_3  &  \sqrt{\frac{2}{3}}E_3 & \sqrt{\frac{2}{3}}E_3 \\
     JG(E_1,E_3)  &  -\sqrt{\frac{2}{3}}E_2  &  -\sqrt{\frac{2}{3}}E_1  & \sqrt{\frac{2}{3}}E_2 \\
     JG(E_2,E_3)  &  -\sqrt{\frac{2}{3}}E_1  &  \sqrt{\frac{2}{3}}E_2   & \sqrt{\frac{2}{3}}E_1 \\
     \hline
\end{tblr}\label{tabla}
\end{equation}

\begin{lemma}[\cite{anarella}]
    Let $M$ be a Lagrangian submanifold of the pseudo-nearly Kähler  $\Sl\times\Sl$ and $P$ the almost product structure given in $(\ref{prodstructuredef})$. 
    Then there exists a Lagrangian submanifold $N$ congruent to $M$ such that the restriction of $P$ to $N$ can be written as $P|_N=A+JB$, where $A,B:TN\to TN$ must have one of the following forms, with respect to a $\Delta_i$-orthonormal frame $\{E_1,E_2,E_3\}$:
   \begin{table}[H]
     \centering
     \makebox[\textwidth]{
     \begin{tabular}{l l l}
       \columntag{Type I}{case:10.1} &
       $A = \begin{pmatrix}
       \cos 2\theta_1 & 0 & 0 \\
       0 & \cos 2\theta_2 & 0 \\
       0 & 0 & \cos 2\theta_3
       \end{pmatrix}$, &
       $B = \begin{pmatrix}
       \sin 2\theta_1 & 0 & 0 \\
       0 & \sin 2\theta_2 & 0 \\
       0 & 0 & \sin 2\theta_3
       \end{pmatrix}$, 
       \\[4ex]
       \multicolumn{3}{l}{
       with $\Delta_i=\Delta_1$ and $\theta_1 + \theta_2 + \theta_3 = 0$ modulo $\pi$,}
       \\[2ex]
        \columntag{Type II}{case:10.2} &
       $A = \begin{pmatrix}
       \cos 2\theta_1 & 1 & 0 \\
       0 & \cos 2\theta_1 & 0 \\
       0 & 0 & \cos 2\theta_2
       \end{pmatrix}$, &
       $B = \begin{pmatrix}
       \sin 2\theta_1 & -\cot 2\theta_1 & 0 \\
       0 & \sin 2\theta_1 & 0 \\
       0 & 0 & \sin 2\theta_2
       \end{pmatrix}$, 
       \\[4ex]
       \multicolumn{3}{l}{
        with $\Delta_i=\Delta_2$, $2\theta_1 + \theta_2 = 0$ modulo $\pi$ and $\theta_1 \ne 0, \pi/2$,}
       \\[2ex]
       \columntag{Type III}{case:10.3} &
       $A = \begin{pmatrix}
       -\frac12 & 0 & 1 \\ 0 & -\frac12 & 0 \\ 0 & 1 & -\frac12
       \end{pmatrix}$, &
       $B = \pm \begin{pmatrix}
       \frac{\sqrt 3}{2} & \frac{-4}{3\sqrt 3} & \frac{1}{\sqrt 3} \\
       0 & \frac{\sqrt 3}{2} & 0 \\
       0 & \frac{1}{\sqrt 3} & \frac{\sqrt 3}{2}
       \end{pmatrix}$,
       \\[4 ex]
       \multicolumn{3}{l}{with $\Delta_i=\Delta_2$,}
       \\[2ex]
       \columntag{Type IV}{case:10.4} &
       $A = \begin{pmatrix}
       \cosh\psi \cos 2\theta_1 & \sinh\psi\sin\theta_2 & 0 \\
       -\sinh\psi\sin\theta_2 & \cosh\psi \cos 2\theta_1 & 0 \\
       0 & 0 & \cos 2\theta_2
       \end{pmatrix}$, & \\[4ex]
       & $B = \begin{pmatrix}
       \cosh\psi \sin 2\theta_1 & \sinh\psi\cos\theta_2 & 0 \\
       -\sinh\psi\cos\theta_2 & \cosh\psi \sin 2\theta_1 & 0 \\
       0 & 0 & \sin 2\theta_2
       \end{pmatrix}$,&
       \\[4ex]
       \multicolumn{3}{l}{ with $\Delta_i=\Delta_3$, $2\theta_1 + \theta_2 = 0$ modulo $\pi$, $\theta_2 \ne 0,\pi$ and $\psi \ne 0$.}
     \end{tabular}
     }
   \end{table}
   \label{propAB}
    \end{lemma}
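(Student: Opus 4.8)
The plan is to reduce the statement to a pointwise classification of the pair $(A,B)$ at a point $p\in M$, and then to realise the required normalisations by ambient isometries. First I would fix $p$ and write $P|_M=A+JB$ with $A,B\colon T_pM\to T_pM$. Because $P$ is Lagrangian is equivalent to $T_pM\perp JT_pM$, and since $J$ is a $g$-isometry carrying $T_pM$ onto $N_pM=JT_pM$, the ambient signature $(4,2)$ splits as $(2,1)\oplus(2,1)$, so $g|_{T_pM}$ is Lorentzian of signature $(2,1)$. Using only the properties of $P$ in \eqref{proppe}, I would derive the algebraic constraints on $(A,B)$: symmetry of $P$ gives that $A$ and $B$ are $g$-self-adjoint, while $PJ=-JP$ together with $P^2=\id$ gives, after splitting $P^2X$ into its $T_pM$- and $JT_pM$-components, the two identities $A^2+B^2=\id$ and $[A,B]=0$. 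Thus the problem becomes: classify, up to a Lorentz change of frame, a commuting pair of $g$-self-adjoint operators on $\R^3_1$ with $A^2+B^2=\id$.

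Second, I would invoke the well-known canonical forms of a self-adjoint operator on a three-dimensional Lorentzian vector space. There are exactly four Segre types: three real eigenvalues with an orthonormal eigenbasis; one real eigenvalue together with a complex-conjugate pair acting as a rotation--boost on a timelike plane; a repeated eigenvalue with a single null eigenvector (a size-$2$ Jordan block on a degenerate plane); and a triple eigenvalue with a size-$3$ indecomposable block. These four cases correspond precisely to Types~\ref{case:10.1}--\ref{case:10.4}. Since $A$ and $B$ commute, $B$ preserves the generalised eigenspaces and block structure of $A$, so the two operators are brought simultaneously into block form. For each case I would choose the adapted frame: a $\Delta_1$-orthonormal frame in the diagonalisable case, a $\Delta_3$-orthonormal frame (with the timelike vector inside the rotation block) in the complex case, and a $\Delta_2$ null frame in the two Jordan cases.

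Third, I would impose the two remaining conditions in each block form. The relation $A^2+B^2=\id$ determines the matrices up to the angle parameters $\theta_i$ (and the hyperbolic parameter $\psi$ in the complex case): it forces the off-diagonal entry $-\cot 2\theta_1$ in Type~\ref{case:10.2} and the explicit constants in Type~\ref{case:10.3}, and it already yields $2\theta_1+\theta_2\equiv 0 \pmod{\pi}$ in Type~\ref{case:10.4} through the vanishing of the off-diagonal term of the rotation block. The remaining angle relations come from the compatibility $PG(X,Y)+G(PX,PY)=0$ in \eqref{proppe}: writing $PE_i=AE_i+JBE_i$, expanding $G(PE_i,PE_j)$ by bilinearity together with $G(X,JY)=-JG(X,Y)$ and $G(JX,JY)=-G(X,Y)$ from \eqref{nkprop}, and substituting the values of $JG(E_i,E_j)$ from the table \eqref{tabla}, reduces the identity to trigonometric equations in the $\theta_i$. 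Their solution gives $\theta_1+\theta_2+\theta_3\equiv 0 \pmod\pi$ in Type~\ref{case:10.1} and $2\theta_1+\theta_2\equiv0 \pmod\pi$ in Type~\ref{case:10.2}; combined with $A^2+B^2=\id$ they leave no free angle in Type~\ref{case:10.3}, pinning the common diagonal value to $\cos\tfrac{2\pi}{3}=-\tfrac12$.

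Finally, I would upgrade the pointwise normal form to the statement about a congruent submanifold $N$. By Theorem~\ref{groupofisometries1} and Lemma~\ref{lemmasl2requaltoso21}, the subgroup of isometries fixing $(\id_2,\id_2)$ and preserving $J$ and $P$ realises, on $T_pM\cong\R^3_1$, every Lorentz change of frame. Hence the Lorentz transformation used above to reach the canonical position, as well as the normalisation of signs and orderings (for instance the overall $\pm$ in Type~\ref{case:10.3} and the ordering of the $\theta_i$), is induced by an ambient isometry, producing a congruent $N$ carrying the canonical frame; homogeneity of $\Sl\times\Sl$ then lets me transport the adapted frame along $N$. The main obstacle is the classification over the indefinite metric: handling the non-diagonalisable Types~\ref{case:10.2} and~\ref{case:10.3} with null eigenvectors, where one must work in null frames and verify that commutativity and $A^2+B^2=\id$ are compatible only with the specific nilpotent off-diagonal entries, and re-deriving the relations of the table \eqref{tabla} in the non-orthonormal frames $\Delta_2$ and $\Delta_3$.
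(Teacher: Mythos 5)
First, a caveat: the paper itself contains no proof of Lemma \ref{propAB}; it is imported verbatim from \cite{anarella}, so your proposal can only be judged against the mechanism the surrounding text presupposes, and on its own terms. Your pointwise algebra is correct and matches what the paper records just before the lemma: $A$ and $B$ are $g$-symmetric, commute, and satisfy $A^2+B^2=\id$; the four types are exactly the four Segre types of a $g$-symmetric operator on a Lorentzian $3$-space; the relation $2\theta_1+\theta_2\equiv 0$ in Type \ref{case:10.4} does follow from $A^2+B^2=\id$ (the off-diagonal entry of that block is $2\sinh\psi\cosh\psi\sin(2\theta_1+\theta_2)$), and the relations in Types \ref{case:10.1} and \ref{case:10.2} do follow from $PG(X,Y)+G(PX,PY)=0$ combined with table \eqref{tabla}.

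The genuine gap is your final step, i.e.\ exactly the point where the word ``congruent'' in the statement must be earned, and your explanation of it is wrong. Choosing a $\Delta_i$-orthonormal frame costs nothing --- it is a choice of basis of $T_pM$ and requires no isometry --- while the point-fixing isometries you invoke via Theorem \ref{groupofisometries1} and Lemma \ref{lemmasl2requaltoso21} act on the pair by conjugation $(A,B)\mapsto(\mathcal F_*A\mathcal F_*^{-1},\mathcal F_*B\mathcal F_*^{-1})$ and therefore never change the matrix form. The congruences the lemma actually needs are the $\Psi_{\kappa,\tau}$ of \eqref{isoslsl}: by Lemma \ref{isometrieswithAB} they act by $(A,B)\mapsto\big(\cos\tau A+(-1)^\kappa\sin\tau B,\,-\sin\tau A+(-1)^\kappa\cos\tau B\big)$, a rotation in the $(A,B)$-plane which is \emph{not} induced by any change of frame; your proof never uses this action, and without it the classification cannot close up. A concrete witness: let $M$ be the Type \ref{case:10.3} submanifold of Example \ref{type3submanifold} and set $M'=\Psi_{0,\tau}(M)$ for a suitable $\tau\in\{2\pi/3,4\pi/3\}$. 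Then $M'$ is again Lagrangian, so its pair $(A',B')$ automatically satisfies every pointwise identity your analysis can produce; but Lemma \ref{isometrieswithAB} gives $A'=\id+(\text{nilpotent})$ and $B'$ nilpotent with $(B')^2\neq 0$. No change of frame takes this pair to any of the four listed forms: traces and Jordan types are frame-invariant, the only listed $B$ whose nilpotent part has nonzero square is that of Type \ref{case:10.3}, and that one has trace $\pm\tfrac{3\sqrt 3}{2}\neq 0=\operatorname{Trace}(B')$. So step (4), as formulated, terminates with normal forms outside your list. There is a second, smaller hole in the same step: ``homogeneity of $\Sl\times\Sl$ lets me transport the adapted frame along $N$'' is not an argument, since the lemma concerns arbitrary Lagrangian submanifolds and $M$ carries no group action; to obtain a frame field and differentiable angle functions you must show that the Segre type is locally constant on a dense open subset and that eigenvectors and Jordan chains can be chosen smoothly there --- a step that cannot be replaced by ambient homogeneity.
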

The functions $\theta_i$ and $\psi$ are called the angle functions. Given the extrinsically invariant nature of the type, we say that a Lagrangian submanifold $M$ is of type I, II, III or IV. Likewise, if $M$ is of type i we say that $A$ and $B$ take type i form on $M$.

Suppose $M$ and $N$ are congruent Lagrangian submanifolds of $\Sl\times\Sl$. Namely, there exists $\mathcal{F}\in\iso(\Sl\times\Sl)$ such that $\mathcal{F}(M)=N$. The following lemma provides us a way to see how $P$ projects to $A$ and $B$ on $N$ in terms of $A$ and $B$ on $M$.
\begin{lemma} \label{isometrieswithAB}
    Let $M$ be a Lagrangian submanifold of $\Sl\times\Sl$ and $P$ the almost product structure given in $(\ref{prodstructuredef})$. Assume that $A$ and $B$ are such that $P\vert_M=A+JB$. Let $\mathcal{F}$ be an isometry of the pseudo-nearly Kähler $\Sl\times\Sl$. Then $P\vert_{\mathcal{F}(M)}=\tilde{A}+J\tilde{B}$ with
    \begin{equation*}
        \begin{split}
             \tilde{A}&=\mathcal{F}_*(\cos \tau A+(-1)^\kappa \sin \tau B)\mathcal{F}^{-1}_*\\
             \tilde{B}&=\mathcal{F}_*(-\sin\tau A+(-1)^\kappa \cos \tau B)\mathcal{F}^{-1}_*\\
        \end{split}
    \end{equation*}
    where $\tau$ and $\kappa$ are such that $\mathcal{F}_*  P=(\cos \tau P+\sin\tau JP)\mathcal{F}_*$ and $\mathcal{F}_ *  J=(-1)^\kappa  \mathcal{F}_*$.
\end{lemma}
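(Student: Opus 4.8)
The plan is to push the defining identity $P|_M = A + JB$ forward along $\mathcal{F}$ and read off the tangential and normal components on $N := \mathcal{F}(M)$, using only the two commutation rules $\mathcal{F}_* P = (\cos\tau P + \sin\tau JP)\mathcal{F}_*$ and $\mathcal{F}_* J = (-1)^\kappa J\mathcal{F}_*$ that are recorded in the statement. Concretely, I fix a vector field $X$ tangent to $M$, so that $X' := \mathcal{F}_* X$ is tangent to $N$, and I apply $\mathcal{F}_*$ to the relation $PX = AX + JBX$.

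For the left-hand side I would invoke $\mathcal{F}_* P = (\cos\tau P + \sin\tau JP)\mathcal{F}_*$ to get $\mathcal{F}_* PX = (\cos\tau P + \sin\tau JP)X'$. For the right-hand side I would use $\mathcal{F}_* J = (-1)^\kappa J\mathcal{F}_*$ to pull $J$ through $\mathcal{F}_*$, obtaining $\mathcal{F}_* AX + (-1)^\kappa J\mathcal{F}_* BX = \hat{A}X' + (-1)^\kappa J\hat{B}X'$, where I abbreviate $\hat{A} := \mathcal{F}_* A\mathcal{F}_*^{-1}$ and $\hat{B} := \mathcal{F}_* B\mathcal{F}_*^{-1}$, both mapping $TN$ to $TN$. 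Note that the right-hand side is already split into a $TN$-part and a $JTN$-part.

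The next step is to expand the left-hand side in terms of the (a priori unknown) decomposition $PX' = \tilde{A}X' + J\tilde{B}X'$ that I want to identify. Since $J^2 = -\id$, one has $JPX' = J\tilde{A}X' - \tilde{B}X'$, so that $(\cos\tau P + \sin\tau JP)X' = (\cos\tau\tilde{A} - \sin\tau\tilde{B})X' + J(\sin\tau\tilde{A} + \cos\tau\tilde{B})X'$. Because $N$ is Lagrangian, the ambient tangent bundle splits as $TN \oplus JTN$, and this splitting is unique; matching the $TN$-parts and the $JTN$-parts on the two sides therefore yields the linear system $\cos\tau\tilde{A} - \sin\tau\tilde{B} = \hat{A}$ and $\sin\tau\tilde{A} + \cos\tau\tilde{B} = (-1)^\kappa\hat{B}$. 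Inverting this rotation-type system gives $\tilde{A} = \cos\tau\,\hat{A} + (-1)^\kappa\sin\tau\,\hat{B}$ and $\tilde{B} = -\sin\tau\,\hat{A} + (-1)^\kappa\cos\tau\,\hat{B}$, which is exactly the asserted formula after substituting back the definitions of $\hat{A}$ and $\hat{B}$.

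The argument is entirely linear-algebraic, so there is no genuine obstacle; the only points requiring care are keeping the sign $(-1)^\kappa$ attached to the correct operator when commuting $J$ past $\mathcal{F}_*$, and justifying the component-matching step. The latter rests precisely on the uniqueness of the $TN \oplus JTN$ splitting together with the fact that $\hat{A}X'$, $\hat{B}X'$, $\tilde{A}X'$ and $\tilde{B}X'$ are all tangent to $N$ while their images under $J$ are normal; this is what allows me to equate the tangential and normal blocks independently and then solve the resulting $2\times 2$ system for $\tilde{A}$ and $\tilde{B}$.
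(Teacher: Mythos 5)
Your proof is correct, and all the computations check out: pushing $PX = AX + JBX$ forward with $\mathcal{F}_*$, using the two commutation rules, decomposing against the splitting $TN \oplus JTN$, and inverting the resulting rotation-type $2\times 2$ system does yield exactly the stated formulas for $\tilde{A}$ and $\tilde{B}$. Note that the paper actually states this lemma \emph{without} proof, so there is no argument in the paper to compare against; your linear-algebraic derivation is the natural one the authors evidently had in mind (and note, in passing, that the displayed hypothesis $\mathcal{F}_*J = (-1)^\kappa\mathcal{F}_*$ in the paper is missing a $J$ on the right-hand side, which you correctly supplied).

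The only step you lean on without justification is that $N = \mathcal{F}(M)$ is itself Lagrangian, which is what makes the splitting $T(\Sl\times\Sl)\vert_N = TN \oplus JTN$ available and unique, and hence makes the component-matching legitimate. This is immediate from the hypotheses --- since $\mathcal{F}$ is an isometry, $JTN = J\mathcal{F}_*TM = \pm\mathcal{F}_*(JTM)$ is the image under $\mathcal{F}_*$ of the normal bundle of $M$, hence normal to $N$ --- but it deserves one sentence, since the entire conclusion of the lemma (writing $P\vert_{\mathcal{F}(M)} = \tilde{A} + J\tilde{B}$ with $\tilde{A},\tilde{B}\colon TN \to TN$) is meaningless without it.
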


Let $M$ be a Lagrangian submanifold of $\Sl\times\Sl$ and let $\{E_1,E_2,E_3\}$ be a $\Delta_i$-orthonormal frame such that $A$ and $B$ take type I, II, III or IV form.
We define the functions $\omega_{ij}^k$ and $h_{ij}^k$ by
\[
    \nabla_{E_i}E_j=\sum_{k=1}^3\omega_{ij}^kE_k, \hspace{2 cm} h(E_i,E_j)=\sum_{k=1}^3h_{ij}^kJE_k,
\]
where $\nabla$ is the Levi-Civita connection of the submanifold and $h$ the second fundamental form.
The symmetry of $h$, the second equation of (\ref{lagrprop}) and the compatibility of the connection with the metric yield many symmetries on $\omega_{ij}^k$ and $h_{ij}^k$, depending on $\Delta_l$.

First, for $\Delta_1$- and $\Delta_3$-orthonormal frames $\{E_i\}$  we have
\[
\delta_k\omega_{ij}^k=-\delta_j\omega_{ik}^j, \ \ \ \ \ \ \ h_{ij}^k=h_{ji}^k=\delta_j\delta_kh_{ik}^j,
\]
where 
\[\delta_i=g( E_i,E_i).\]
This implies that $\omega_{ij}^j=0$ for all $i,j=1,2,3$.

For $\Delta_2$-orthonormal frames we obtain 
\[
\omega_{ij}^k=-\omega_{i\widehat{k}}^{\widehat{j}},\ \ \ \ \ \ \ h_{ij}^k=h_{ji}^k=h_{i\widehat{k}}^{\widehat{j}}
\] 
where $\widehat{2}=1$, $\widehat{1}=2$ and $\widehat{3}=3$. As before, we have that $\omega_{i3}^3=0$. 
Also,  if $j=1$, $k=2$ or $j=2$, $k=1$ then $\omega_{ij}^k=0$.

Now, the frame $\{E_1,E_2,E_3\}$ is chosen in terms of $P$. Thus, Equation \eqref{nablap} will impose conditions on the functions $\omega_{ij}^k$ and $h_{ij}^k$. We divide between types I, II, III and IV.

\subsection{Lagrangian submanifolds of type I}

Equation (\ref{nablap}) for Type I Lagrangian submanifolds yields the following lemma, which was proven in \cite{anarella}.

\begin{lemma}\label{lemmacase1}Let $M$ be a Lagrangian submanifold of the pseudo-nearly Kähler $\Sl\times\Sl$. Suppose that  $A$ and $B$ take type I form in Lemma \ref{propAB} with respect to a $\Delta_1$-orthonormal frame $\{E_1,E_2,E_3\}$. 
    Except for $h_{12}^3$, all the components of the second fundamental form are given by the derivatives of the angle functions $\theta_1,\theta_2$ and $\theta_3$:
\begin{equation}
    E_i(\theta_j)=-\delta_i\delta_jh_{jj}^i,
    \label{anglederi}
\end{equation}
where $\delta_i=g(E_i,E_i)$. Also 
\begin{equation}
   h_{ij}^k\cos(\theta_j-\theta_k)=(\tfrac{1}{\sqrt{6}}\delta_k\varepsilon_{ij}^k-\omega_{ij}^k)\sin(\theta_j-\theta_k),\label{sffc}
\end{equation}
for $j\neq k$.
\end{lemma}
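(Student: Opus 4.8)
The plan is to feed the Type I expressions for $A$ and $B$ into the structural identity \eqref{nablap} and then read off the tangent and normal parts of both sides in the adapted frame $\{E_1,E_2,E_3,JE_1,JE_2,JE_3\}$. As preparation I would first record how $P$ acts in this frame. Since $A,B$ are diagonal in Type I form, on $M$ one has $PE_j=\cos 2\theta_j\,E_j+\sin 2\theta_j\,JE_j$, and the relation $PJ=-JP$ from \eqref{proppe} gives $P(JE_j)=\sin 2\theta_j\,E_j-\cos 2\theta_j\,JE_j$. I would also write $JG(E_i,E_j)=c\,E_l$ directly from the $\Delta_1$ column of table \eqref{tabla}, so that $G(E_i,E_j)=-J\,JG(E_i,E_j)=-c\,JE_l$, and note $G(E_i,E_i)=0$ by skew-symmetry of $G$.

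Next I would compute the left-hand side $(\tilde\nabla_{E_i}P)E_j=\tilde\nabla_{E_i}(PE_j)-P\tilde\nabla_{E_i}E_j$. Expanding $\tilde\nabla_{E_i}(PE_j)$ by Leibniz requires $\tilde\nabla_{E_i}E_j=\sum_k\omega_{ij}^k E_k+\sum_k h_{ij}^k JE_k$ from the Gauss formula \eqref{gaussformula}, together with $\tilde\nabla_{E_i}(JE_j)=G(E_i,E_j)+J\tilde\nabla_{E_i}E_j$; the derivatives of $\cos 2\theta_j,\sin 2\theta_j$ produce the terms $E_i(\theta_j)$. For the term $P\tilde\nabla_{E_i}E_j$ I apply the frame formulas for $P$ and $PJ$ recorded above. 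Everything collects into explicit $E_k$- and $JE_k$-coefficients built from $\omega_{ij}^k$, $h_{ij}^k$, $E_i(\theta_j)$ and the angle functions.

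For the right-hand side $\tfrac12\big(JG(E_i,PE_j)+JPG(E_i,E_j)\big)$ I would use $G(E_i,JE_j)=-JG(E_i,E_j)$ from \eqref{nkprop} to get $JG(E_i,PE_j)=\cos 2\theta_j\,JG(E_i,E_j)+\sin 2\theta_j\,G(E_i,E_j)$, and then $PJ=-JP$ with the fact that $G(E_i,E_j)$ is normal to rewrite $JPG(E_i,E_j)$; writing $JG(E_i,E_j)=c\,E_l$ turns both sides into combinations of $E_l$ and $JE_l$ alone. Equating coefficients of each $E_k$ and each $JE_k$ then finishes the proof. The components with $k=j$ yield $E_i(\theta_j)=-h_{ij}^j$, which becomes \eqref{anglederi} after the $\Delta_1$-symmetry $h_{ij}^j=\delta_i\delta_j h_{jj}^i$; this accounts for every component carrying a repeated index, leaving only $h_{12}^3$. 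The components with $k\neq j$, after converting $\cos 2\theta_j\mp\cos 2\theta_k$ and $\sin 2\theta_j\pm\sin 2\theta_k$ by product-to-sum identities, factor through $\sin(\theta_j+\theta_k)$ (from the $E_k$-equation) or $\cos(\theta_j+\theta_k)$ (from the $JE_k$-equation) and collapse to \eqref{sffc}, with the constant $\tfrac12 c=\tfrac1{\sqrt6}\delta_k\varepsilon_{ij}^k$ arising from the entries $\sqrt{2/3}$ in table \eqref{tabla}.

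The main obstacle is the bookkeeping rather than any single conceptual step: one must correctly propagate the action of $P$ on the normal bundle through $PJ=-JP$, keep the $\delta$-signs and the $\Delta_1$-symmetries of $\omega_{ij}^k$ and $h_{ij}^k$ straight, and then check that the tangent- and normal-component equations are mutually consistent. Concretely, the delicate point is verifying that the $JE_k$-equations reproduce the relations extracted from the $E_k$-equations rather than contradicting them — in particular that when $\sin(\theta_j+\theta_k)$ vanishes the companion $\cos(\theta_j+\theta_k)$ factor still yields \eqref{sffc} — which forces the product-to-sum simplifications and the precise sign conventions of table \eqref{tabla} to line up exactly.
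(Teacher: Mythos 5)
Your proposal is correct and takes essentially the same approach as the paper: the paper's proof of this lemma is precisely ``compute Equation \eqref{nablap} with the Type I form of $A$ and $B$ and compare frame components'' (with the details deferred to \cite{anarella}), which is exactly the computation you carry out. Your key checkpoints all verify---the $k=j$ components give $E_i(\theta_j)=-h_{ij}^j=-\delta_i\delta_j h_{jj}^i$, the constant $\tfrac{1}{2}\sqrt{\tfrac{2}{3}}=\tfrac{1}{\sqrt{6}}$ matches $\tfrac{1}{\sqrt 6}\delta_k\varepsilon_{ij}^k$ via table \eqref{tabla}, and the $E_k$- and $JE_k$-equations reduce to \eqref{sffc} through the complementary factors $\sin(\theta_j+\theta_k)$ and $\cos(\theta_j+\theta_k)$, which never vanish simultaneously.
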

\subsection{Lagrangian submanifolds of type II} The covariant derivative of $P$ in Equation \eqref{nablap} yields the following lemma for type II Lagrangian submanifolds.
\begin{lemma}\label{nablapcase2}
    Let $M$ be a Lagrangian submanifold of the pseudo-nearly Kähler $\Sl\times\Sl$. 
    Suppose that $A$ and $B$ are of type II in Lemma \ref{propAB} with respect to a $\Delta_2$-orthonormal frame $\{E_1,E_2,E_3\}$. Then $h(E_1,E_1)=0$. 
    Moreover, the derivatives of the angles are given by
    \begin{equation}
        \begin{aligned}
            E_1(\theta_1)&=-h_{11}^1=0, \ \ \ \ &&E_2(\theta_1)=-h_{22}^2, \ \ \ \ &&E_3(\theta_1)=-h_{12}^3 \\
            E_1(\theta_2)&=-h_{33}^2, \ \ \ \ &&E_2(\theta_2)=-h_{33}^1, \ \ \ \ &&E_3(\theta_2)=-h_{33}^3.\\
        \end{aligned} \label{derivativesthetatype2}
    \end{equation}
    Furthermore,
    \begin{equation}
         h_{33}^1=-2h_{22}^2, \ \ \ \ \ \ \ \ \ \ h_{33}^2=-2h_{11}^1=0, \ \ \ \ \ \ \ \ \ \ h_{33}^3=-2h_{12}^3.
    \end{equation}
\end{lemma}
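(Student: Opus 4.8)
The plan is to evaluate the nearly-Kähler identity \eqref{nablap} on the frame fields $X=E_i$, $Y=E_j$ and to compare it with a direct computation of $(\tilde\nabla_{E_i}P)E_j$ that uses $P|_M=A+JB$. Writing $h(E_i,E_j)=\sum_k h_{ij}^k JE_k$ and abbreviating $H_{ij}=\sum_k h_{ij}^k E_k$, I would expand $(\tilde\nabla_{E_i}P)E_j=\tilde\nabla_{E_i}(PE_j)-P\tilde\nabla_{E_i}E_j$ by means of the Gauss formula \eqref{gaussformula}, the Weingarten formula, and the defining rule $\tilde\nabla_X(JZ)=G(X,Z)+J\tilde\nabla_XZ$. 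Since $M$ is Lagrangian, $TM$ and $JTM$ are complementary and $G(E_i,E_j)$ is normal by \eqref{lagrprop}; projecting onto these two subbundles gives
\begin{equation*}
\big[(\tilde\nabla_{E_i}P)E_j\big]^{\top}=(\nabla_{E_i}A)E_j+Jh(E_i,BE_j)-BH_{ij},
\end{equation*}
\begin{equation*}
\big[(\tilde\nabla_{E_i}P)E_j\big]^{\bot}=J(\nabla_{E_i}B)E_j+h(E_i,AE_j)+G(E_i,BE_j)+JAH_{ij}.
\end{equation*}
For the right-hand side of \eqref{nablap} I would use $PE_j=AE_j+JBE_j$ and the relation $G(X,JY)=-JG(X,Y)$ from \eqref{nkprop} to write $\tfrac12\big(JG(E_i,PE_j)+JPG(E_i,E_j)\big)$ again as a sum of a tangent and a normal field, expressing every $G(E_i,E_j)$ through its tangent image $JG(E_i,E_j)$ taken from the $\Delta_2$ column of \eqref{tabla}. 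Matching tangent and normal parts (applying $J$ to the normal one to make everything tangent) produces two tensorial identities relating $\nabla A$, $\nabla B$, the second fundamental form and the fixed constant $\sqrt{2/3}$.

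Next I would substitute the explicit type II matrices. Since $\nabla_{E_i}E_j=\sum_k\omega_{ij}^kE_k$, each $(\nabla_{E_i}A)E_j=\nabla_{E_i}(AE_j)-A\nabla_{E_i}E_j$ contributes the angle derivatives $E_i(\theta_1),E_i(\theta_2)$ through the $\theta$-dependent entries and the coefficients $\omega_{ij}^k$ through the off-diagonal entries $1$ and $-\cot2\theta_1$; likewise for $B$. I would then impose the symmetry relations valid for a $\Delta_2$-frame, namely $\omega_{ij}^k=-\omega_{i\hat k}^{\hat j}$, $h_{ij}^k=h_{ji}^k=h_{i\hat k}^{\hat j}$ (with $\hat1=2$, $\hat2=1$, $\hat3=3$), $\omega_{i3}^3=0$ and $\omega_{i1}^2=\omega_{i2}^1=0$. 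Because the frame is null in the $E_1,E_2$ directions, reading off the coefficient of $E_1$ from a tangent vector while remembering that $E_1$ and $E_2$ pair dually is the most delicate part of the bookkeeping.

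Evaluating the tangent identity on the pair $(E_1,E_1)$ immediately gives $h_{11}^2=0$ together with $E_1(\theta_1)=-h_{11}^1$, and comparing the tangent and normal identities across the pairs $(E_i,E_j)$ yields the derivative formulas in \eqref{derivativesthetatype2}. The relations $h_{33}^k=-2h_{12}^k$ follow from minimality (Proposition \ref{minimal}), since the trace of $h$ in a $\Delta_2$-frame is $2h(E_1,E_2)+h(E_3,E_3)$; combined with the symmetries $h_{12}^2=h_{11}^1$ and $h_{12}^1=h_{22}^2$ this is exactly the last display of the lemma, once $h_{11}^1=0$ is known. To obtain $h(E_1,E_1)=0$ I would combine the tangent and normal identities on the mixed pair $(E_2,E_1)$: their $E_1$-components give two equations in $E_2(\theta_1)+h_{12}^1$ and $h_{11}^1$ whose elimination produces $(\cot^2 2\theta_1+1)h_{11}^1=0$, forcing $h_{11}^1=0$ (the hypothesis $\theta_1\ne0,\pi/2$ guarantees $\sin2\theta_1\ne0$ throughout); an entirely analogous pairing pins down $h_{11}^3=0$, and the constraint $2\theta_1+\theta_2\equiv0\pmod\pi$, i.e.\ $E_i(\theta_2)=-2E_i(\theta_1)$, renders the whole system consistent.

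The main obstacle I anticipate is not any single identity but the combined bookkeeping: the non-diagonal form of $A$ and $B$ in type II makes $\nabla A$ and $\nabla B$ genuinely more involved than in the diagonal type I case of Lemma \ref{lemmacase1}, the null normalization of the $\Delta_2$-frame means every contraction interchanges the roles of $E_1$ and $E_2$, and—crucially—the vanishing $h(E_1,E_1)=0$ is invisible to the tangent identity alone and emerges only after pairing it with the normal identity and eliminating the angle derivatives. I would therefore organize the work by fixing $i$, running through $j\in\{1,2,3\}$ in both identities, extracting one scalar equation per frame projection, and cross-checking the resulting overdetermined system against the symmetry relations to isolate precisely the stated components.
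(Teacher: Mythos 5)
Your proposal is correct and takes essentially the same route as the paper: both evaluate the identity \eqref{nablap} on frame pairs $(E_i,E_j)$, split into tangential and normal components (your explicit expansion of $(\tilde{\nabla}_{E_i}P)E_j$ via the Gauss and Weingarten formulas is just what the paper computes implicitly), obtain $h(E_1,E_1)=0$ from the pair involving $E_1$ by eliminating between the $\sin 2\theta_1$- and $\cos 2\theta_1$-weighted equations, and read off the angle derivatives from the remaining components. Your use of minimality for the trace relations $h_{33}^k=-2h_{12}^k$ (together with the $\Delta_2$-symmetries $h_{12}^1=h_{22}^2$, $h_{12}^2=h_{11}^1$) is one of the two justifications the paper itself offers, the other being the relation $E_i(\theta_2)=-2E_i(\theta_1)$ which you correctly note as the consistency of the system.
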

\begin{proof}
    Computing Equation \eqref{nablap} with $X=E_2$, $Y=E_1$ and looking on the components of $E_2$ and $JE_2$ we obtain
    \begin{equation*}
        h_{11}^1 \sin2\theta_1 =0, \ \ \ \ h_{11}^1\cos 2\theta_1=0.
    \end{equation*}
    Since sine and cosine never vanish at the same time we get that $h_{11}^1=0$. Same can be done computing Equation \eqref{nablap} with $X=E_1$, $Y=E_1$ and $X=E_3$, $Y=E_1$ and looking in the directions of $E_2$ and $JE_2$ , obtaining $h_{11}^2=h_{11}^3=0$. Looking in the direction of $E_1$ and $JE_1$ on the same equations, we obtain the derivatives of the function $\theta_1$. We derive $E_i(\theta_2)$ by computing Equation \eqref{nablap} with $X=E_i$ and $Y=E_3$.

    We obtain the last statement either from Proposition \ref{minimal}, or from \eqref{derivativesthetatype2} and the fact that $2\theta_1+\theta_2=0$.
\end{proof}
\subsection{Lagrangian submanifolds of type III}
For Lagrangian submanifolds of type III, Equation~\eqref{nablap} gives expressions for all functions $\omega_{ij}^k$, given in the following lemma.
\begin{lemma}\label{nablathirdcase}
    Let $M$ be a Lagrangian submanifold of the pseudo-nearly Kähler $\Sl\times\Sl$. 
    Suppose that $A$ and $B$ take type III form in Lemma \ref{propAB} with respect to a $\Delta_2$-orthonormal frame $\{E_1,E_2,E_3\}$. 
    Then we have $h(E_1,E_1)=0$ and 
    \begin{equation}
        \begin{split}
            h_{12}^3&=\omega_{11}^1=\omega_{11}^3=\omega_{33}^2=0,\\
            \omega_{12}^3&=\frac{\sqrt{2}+(-1)^{k+1 }3 h_{22}^2 }{2 \sqrt{3}},\\
            \omega_{31}^1&=\frac{\sqrt{2}+(-1)^{k +1}12 h_{22}^2 }{2 \sqrt{3}},\\
            \omega_{21}^3&=-\frac{\sqrt{2}+(-1)^{k }6 h_{22}^2 }{2 \sqrt{3}},\\
            \omega_{22}^2&=\frac{(-1)^{k } ( h_{22}^2-3 h_{22}^3)}{\sqrt{3}},\\
            \omega_{33}^1&=\frac{(-1)^{k +1} (4  h_{22}^2-3 h_{22}^3)}{2 \sqrt{3}},\\
            \omega_{22}^3&=\frac{(-1)^{k+1 } (9 h_{22}^1-8  h_{22}^2+6 h_{22}^3)}{6 \sqrt{3}}.
        \end{split}\label{relationsomegatype3}
    \end{equation}
    where $(-1)^k$ with $k\in\{0,1\}$ is the sign of $B$ in Lemma \ref{propAB}.
\end{lemma}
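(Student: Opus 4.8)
The plan is to follow the same strategy used for types I and II (Lemmas~\ref{lemmacase1} and~\ref{nablapcase2}): evaluate the covariant-derivative identity~\eqref{nablap} on pairs of frame vectors and match components in the adapted frame $\{E_1,E_2,E_3,JE_1,JE_2,JE_3\}$ of $T(\Sl\times\Sl)$ along $M$. The key structural simplification is that the type III matrices $A$ and $B$ have constant entries, so none of the angle-function derivatives that complicate the other cases appear here; the identity~\eqref{nablap} therefore reduces to a system of linear relations among the $\omega_{ij}^k$ and $h_{ij}^k$ alone.

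First I would record the two elementary covariant derivatives along $M$. From the Gauss formula~\eqref{gaussformula} we have $\tilde\nabla_{E_i}E_j=\sum_k\omega_{ij}^kE_k+\sum_k h_{ij}^kJE_k$; using $\tilde\nabla J=G$ together with $J^2=-\id$ this gives
\[
\tilde\nabla_{E_i}(JE_j)=G(E_i,E_j)+\sum_k\omega_{ij}^kJE_k-\sum_k h_{ij}^kE_k,
\]
where the values $G(E_i,E_j)=-J\big(JG(E_i,E_j)\big)$ are read off from the $\Delta_2$-column of~\eqref{tabla}. These two formulas differentiate any tangent or normal frame field.

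Next I would expand both sides of~\eqref{nablap} with $X=E_i$, $Y=E_j$. On the left, writing $P|_M=A+JB$ so that $PE_j=AE_j+JBE_j$ and using that $A,B$ are constant, the quantity $(\tilde\nabla_{E_i}P)E_j=\tilde\nabla_{E_i}(AE_j)+\tilde\nabla_{E_i}(JBE_j)-P\tilde\nabla_{E_i}E_j$ is computed term by term with the two formulas above and the relations $P=A+JB$, $PJ=-JP$. On the right, $\tfrac12\big(JG(E_i,PE_j)+JPG(E_i,E_j)\big)$ is evaluated directly from the known $G$-values and the matrices $A,B$. Matching the $E_k$- and $JE_k$-components then produces, for each $(i,j)$, a family of scalar equations linear in the $\omega$'s and $h$'s. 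The vanishing statements $h(E_1,E_1)=0$ and $h_{12}^3=\omega_{11}^1=\omega_{11}^3=\omega_{33}^2=0$ should fall out of the simplest of these, and solving the rest yields the remaining identities of~\eqref{relationsomegatype3}. Throughout, I would use the $\Delta_2$-symmetries $\omega_{ij}^k=-\omega_{i\hat k}^{\hat j}$, $h_{ij}^k=h_{ji}^k=h_{i\hat k}^{\hat j}$, $\omega_{i3}^3=0$ (and the vanishing of $\omega_{ij}^k$ when $\{j,k\}=\{1,2\}$) to reduce the number of independent unknowns; the sign $(-1)^k$ merely tracks the $\pm$ in $B$.

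The main obstacle is bookkeeping in the null frame associated to $\Delta_2$: here $E_1,E_2$ are lightlike with $g(E_1,E_2)=1$ and $g(E_1,E_1)=g(E_2,E_2)=0$, so components cannot be extracted by the usual orthonormal projection and one must consistently apply the $\hat{}$-symmetry relations whenever indices are lowered. This is aggravated by the genuinely non-diagonal form of the type III matrices — in particular the $E_2$–$E_3$ coupling in $A$ and the off-diagonal entry $-4/(3\sqrt3)$ in $B$ — which makes $PE_j$ mix all three tangent directions, so that each component equation typically couples several unknowns at once. The difficulty is thus organizational rather than conceptual: one must carefully assemble and solve the resulting overdetermined linear system, checking that it is consistent and that it collapses to exactly the relations listed in~\eqref{relationsomegatype3}.
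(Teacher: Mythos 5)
Your proposal follows exactly the paper's own argument: the paper proves Lemma~\ref{nablathirdcase} by evaluating Equation~\eqref{nablap} on pairs of frame vectors (e.g.\ $X=E_i$, $Y=E_1$ and $X=E_2$, $Y=E_3$), extracting components in the directions $E_k$, $JE_k$ to get $h(E_1,E_1)=0$ and the vanishing of $h_{12}^3$, $\omega_{11}^3$, etc., and then solving the remaining linear relations "in the same way." Your setup (constancy of the type~III entries of $A$, $B$, the formulas for $\tilde\nabla_{E_i}(JE_j)$ via $G$, and the careful component extraction in the null $\Delta_2$-frame) is a correct and indeed more explicit account of precisely that computation.
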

\begin{proof}
    From computing Equation \eqref{nablap} with $X=E_i$, $Y=E_1$, $i=1,2,3$ and looking at the components in the direction of $JE_2$ it follows $h(E_1,E_1)=0$. 
    Now, if we compute Equation \eqref{nablap} with $X=E_i$, $Y=E_1$, $X=E_2$ $Y=E_3$ in the direction on $E_1$ and $JE_2$ respectively we get $h_{12}^3=\omega_{11}^3=0$. 
    The rest of the equations are obtained in the same way.
\end{proof}

\subsection{Lagrangian submanifolds of type IV}
From Lemma \ref{propAB} we know that $2\theta_1+\theta_2=0$ modulo $\pi$. 
Thus, we write $\theta_2=k\pi-2\theta_1$ where $k=0,1$. 
Contrarily to type II Lagrangian submanifolds, here it is necessary to distinguish between $k=0$ and $k=1$, since both $\theta_2$ and $2\theta_2$ appear in the expressions for $A$ and $B$.

From Proposition \ref{minimal} we may assume that
\begin{equation*}
    h^i_{33}=h^i_{22}-h^i_{11}
\end{equation*}
for $i=1,2,3$.
Now, from Equation \eqref{nablap} we obtain the next lemma.
\begin{lemma}\label{nablapfourthcase}
    Given a Lagrangian submanifold of $\Sl\times\Sl$ of type IV we have that
    \begin{equation}
    \begin{split}
        E_1(\theta_1)&=\frac{h_{22}^1-h_{11}^1}{2},\ \ \  E_2(\theta_1)=\frac{h_{11}^2-h_{22}^2}{2},\ \ \    E_3(\theta_1)=\frac{h_{22}^3-h_{11}^3}{2},\\
        E_1(\psi)&= (-1)^{k }2h_{11}^2,\ \ \ E_2(\psi)=(-1)^{k+1 }2h_{22}^1 ,\ \ \ E_3(\psi)=(-1)^{k+1 } 2 h_{12}^3 .
    \end{split}\label{derivativeslambda}
\end{equation}
Moreover, we produce the following expressions for the functions $\omega_{ij}^k$.

\begin{equation}
    \begin{split}
        \omega_{11}^2&=\frac{(-1)^{k }}{2} (h_{11}^1+h_{22}^1) \coth \psi,\\
        \omega_{22}^1&=\frac{(-1)^{k +1}}{2} (h_{11}^2+h_{22}^2) \coth \psi,\\
        \omega_{32}^1&=\frac{(-1)^{k }}{2}  (h_{11}^3+h_{22}^3) \coth \psi-\frac{1}{\sqrt{6}}.
    \end{split}\label{firstomegas}
\end{equation}
Also,
\begin{equation}
    \begin{split}
        \omega_{11}^3&=\frac{ h_{11}^3 \sin 6 \theta_1+(-1)^{k }h_{12}^3 \sinh \psi}{\cos 6 \theta_1-\cosh \psi},\\
        \omega_{12}^3&=\frac{ h_{12}^3 \sin 6 \theta_1+(-1)^{k +1}h_{11}^3 \sinh \psi}{\cos (6\theta_1)-\cosh \psi}+\frac{1}{\sqrt{6}},\\
        \omega_{22}^3&=\frac{ h_{22}^3 \sin 6 \theta_1+(-1)^{k +1}h_{12}^3 \sinh \psi}{\cos 6 \theta_1-\cosh \psi},\\
        \omega_{21}^3&=\frac{ h_{12}^3 \sin 6 \theta_1+(-1)^{k }h_{22}^3 \sinh \psi}{\cos 6 \theta_1-\cosh \psi}-\frac{1}{\sqrt{6}},\\
        \omega_{33}^1&=\frac{(h_{11}^1-h_{22}^1) \sin 6 \theta_1+(-1)^{k }(h_{22}^2-h_{11}^1) \sinh \psi}{\cos 6 \theta_1-\cosh \psi},\\
        \omega_{33}^2&=\frac{(h_{11}^2-h_{22}^2) \sin 6 \theta_1+(-1)^{k +1}(h_{22}^1-h_{11}^1) \sinh \psi}{\cos 6 \theta_1-\cosh \psi}.\\
    \end{split}\label{restomegas}
\end{equation}

\end{lemma}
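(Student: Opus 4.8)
The plan is to feed the explicit Type IV matrices $A$ and $B$ from Lemma \ref{propAB} into the structural identity \eqref{nablap} and to extract the claimed relations by matching coefficients against the frame, exactly the scheme already used for Types I--III in Lemmas \ref{lemmacase1}--\ref{nablathirdcase}. Writing $P|_M = A + JB$ and $PE_j = AE_j + JBE_j$, I would expand the left-hand side $(\tilde\nabla_{E_i}P)E_j = \tilde\nabla_{E_i}(PE_j) - P\,\tilde\nabla_{E_i}E_j$ using the Gauss formula \eqref{gaussformula}, the Weingarten formula, and $\tilde\nabla J = G$, so that every term is reorganized into tangential pieces $\omega_{ij}^k E_k$, normal pieces $h_{ij}^k JE_k$, and the derivatives $E_i(\theta_1)$, $E_i(\psi)$ coming from differentiating the entries of $A$ and $B$. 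For the right-hand side I would substitute $JG(E_j,E_k)$ from the table \eqref{tabla} in the $\Delta_3$-case (recovering $G$ itself via $G = -J\,(JG)$) and then apply $P$ and $J$. Comparing the $E_k$- and $JE_k$-components for the various choices $X = E_i$, $Y = E_j$ then yields, pair by pair, the system whose solution is \eqref{derivativeslambda}, \eqref{firstomegas} and \eqref{restomegas}.

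Before solving, I would record two simplifications. First, Proposition \ref{minimal} forces $M$ to be minimal, which in the $\Delta_3$-frame reads $h_{11}^i - h_{22}^i + h_{33}^i = 0$, i.e.\ $h_{33}^i = h_{22}^i - h_{11}^i$, exactly the reduction noted just above the statement; this lets me eliminate every $h_{33}^i$ from the outset. Second, since $2\theta_1 + \theta_2 = 0$ modulo $\pi$, I would substitute $\theta_2 = k\pi - 2\theta_1$, so that $\sin\theta_2 = (-1)^{k+1}\sin 2\theta_1$ and $\cos\theta_2 = (-1)^k\cos 2\theta_1$. These parity relations seed the ubiquitous $(-1)^k$ factors in the conclusion, and they make the case distinction between $k=0$ and $k=1$ unavoidable, because the $E_3$-entries of $A$ and $B$ involve $2\theta_2$ rather than $\theta_2$.

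The angle derivatives in \eqref{derivativeslambda} should fall out of the ``diagonal'' comparisons --- those in which the matched frame direction coincides with $E_j$ --- since differentiating $\cosh\psi\cos 2\theta_1$ and its companions produces exactly $E_i(\theta_1)$ and $E_i(\psi)$ against coefficients that the table fixes. The factors $\coth\psi$ in \eqref{firstomegas} then appear whenever an equation carries a lone $\sinh\psi$ multiplying the unknown $\omega$ against a $\cosh\psi$ term on the other side; solving for $\omega$ divides by $\sinh\psi$, which is legitimate precisely because $\psi \ne 0$.

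The main obstacle is the block \eqref{restomegas}. The Type IV matrices carry a genuine boost structure in the $(E_1,E_2)$-plane (entries $\cosh\psi$, $\sinh\psi$ with off-diagonal terms of opposite sign), so the equations for $\omega_{11}^3,\omega_{12}^3,\omega_{22}^3,\omega_{21}^3,\omega_{33}^1,\omega_{33}^2$ do not decouple: they arise as $2\times 2$ linear systems that must be inverted simultaneously. The combination $2\theta_1 - 2\theta_2 \equiv 6\theta_1$ (mod $2\pi$) is what turns the mixed terms into $\cos 6\theta_1$ and $\sin 6\theta_1$, and the determinant of each such system is proportional to $\cos 6\theta_1 - \cosh\psi$, which is the denominator appearing throughout \eqref{restomegas}. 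Here the hypothesis $\psi \ne 0$ does double duty: it guarantees $\cosh\psi > 1 \ge \cos 6\theta_1$, so the determinant never vanishes and the inversion is valid. Carrying the two parities $k \in \{0,1\}$ through this inversion, while keeping the signs of $\sinh\psi$ and of the off-diagonal entries straight, is the delicate bookkeeping the calculation demands; everything else is routine expansion.
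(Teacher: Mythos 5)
Your plan is precisely the paper's proof: evaluate the identity \eqref{nablap} on frame pairs $(E_i,E_j)$ with the Type IV matrices $A$, $B$ (after the minimality reduction $h_{33}^i=h_{22}^i-h_{11}^i$ and the substitution $\theta_2=k\pi-2\theta_1$), where the diagonal choices $i=j$ yield \eqref{derivativeslambda} and then \eqref{firstomegas}, and the off-diagonal choices produce the $2\times2$ linear systems whose inversion — valid since $\cosh\psi>1\ge\cos 6\theta_1$ keeps the determinant nonzero — gives \eqref{restomegas}; so the proposal is correct and takes essentially the same route. One explanatory remark is backwards, though it does not affect the computation: the $(-1)^k$ factors and the need to distinguish $k=0$ from $k=1$ come from the \emph{off-diagonal} entries $\sinh\psi\sin\theta_2$, $\sinh\psi\cos\theta_2$, which involve $\theta_2$ itself and hence pick up $(-1)^k$ under $\theta_2=k\pi-2\theta_1$, whereas the $E_3$-entries $\cos2\theta_2=\cos4\theta_1$ and $\sin2\theta_2=-\sin4\theta_1$ are independent of $k$.
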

\begin{proof}
    From computing $(\nabla_{E_1}P)E_1$ and looking at the components in the direction of $E_2$ and $JE_2$ we get the equation
        \begin{equation*}
            \left(
\begin{array}{cc}
 \cos2\theta_1 & \sin2\theta_1 \\
 -\sin2\theta_1 & \cos2\theta_1 \\
\end{array}
\right)\left(
\begin{array}{c}
 \cosh \psi \left( E_1(\psi) -(-1)^{k }2 h_{11}^2\right) \\
  -\sinh \psi \left(2 E_1(\theta_1)+h_{11}^1-h_{22}^1\right) \\
\end{array}
\right)=0.
        \end{equation*}
        From $(\nabla_{E_2}P)E_2$ and $(\nabla_{E_3}P)E_3$ we derive the rest of the equations in Equation \eqref{derivativeslambda} in a similar way.

        Having Equation \eqref{derivativeslambda} we look at the rest of the components of $(\nabla_{E_i}P)E_i$ and we furnish the expressions in \eqref{firstomegas}.

        By computing $(\nabla_{E_i}P)E_j$ with $i\neq j$ we come to linear equations which yield 
     \eqref{restomegas}.
\end{proof}

\section{Extrinsically homogeneous Lagrangian submanifolds} \label{sectionextrhomosub}
In this section we first prove that 
for each case of Lemma \ref{propAB} there is a unique frame $\{E_i\}_i$ with respect to which $P$ takes that particular shape.
Consequently, the associated angle functions, $h_{ij}^k$ and $\omega_{ij}^k$ are constant.
Afterwards, we describe the examples given in Theorem \ref{maintheorem} and provide a classification for each type of Lagrangian submanifold.

\subsection{The uniqueness of the frames}
We consider each case of Lemma \ref{propAB} separately.
\subsubsection{Lagrangian submanifolds of type I}
It is straightforward to check that for type I Lagrangian submanifolds the frame $\{E_1,E_2,E_3\}$ is unique if and only if the functions $\theta_i$ are all different modulo $\pi$. Later on, we will see that if two of them are equal, the submanifold is totally geodesic.
Hence, now we focus on the case where all angles are different modulo $\pi$.
\begin{proposition}\label{frameunique}
    Let $M$ be an extrinsically homogeneous Lagrangian submanifold of the pseudo-nearly Kähler $\Sl\times\Sl$. 
    Suppose that $\{E_1,E_2,E_3\}$ is the unique $\Delta_1$-orthonormal frame such that $A$ and $B$ take type I form in Lemma \ref{propAB}. 
    Then the functions $\theta_i$, $h_{ij}^k$ and $\omega_{ij}^k$ are constant.
\end{proposition}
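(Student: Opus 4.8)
The plan is to exploit that the distinguished frame is defined purely through the ambient structure $P$, together with the fact that the acting group consists of ambient isometries preserving $M$. Fix two points $p,q\in M$. By extrinsic homogeneity there is an $\mathcal{F}\in H\subset\isoo(\Sl\times\Sl)$ with $\mathcal{F}(M)=M$ and $\mathcal{F}(p)=q$. Since $\mathcal{F}$ lies in the identity component, it preserves both $J$ and $P$, so in the notation of Lemma \ref{isometrieswithAB} we have $\tau=0$ and $\kappa=0$. That lemma then gives $\tilde A=\mathcal{F}_*A\mathcal{F}_*^{-1}$ and $\tilde B=\mathcal{F}_*B\mathcal{F}_*^{-1}$; since $\mathcal{F}(M)=M$, uniqueness of the splitting $P|_M=A+JB$ (coming from $T(\Sl\times\Sl)=TM\oplus JTM$) forces $A(q)=\mathcal{F}_*A(p)\mathcal{F}_*^{-1}$ and $B(q)=\mathcal{F}_*B(p)\mathcal{F}_*^{-1}$.

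First I would use this conjugation relation to transport the frame. If $A(p)E_i(p)=\cos 2\theta_i(p)\,E_i(p)$ and $B(p)E_i(p)=\sin 2\theta_i(p)\,E_i(p)$, then $\mathcal{F}_*E_i(p)$ is a common eigenvector of $A(q)$ and $B(q)$ with the same pair of eigenvalues $(\cos 2\theta_i(p),\sin 2\theta_i(p))$. Because we are in the regime where the $\theta_i$ are pairwise distinct modulo $\pi$, the common eigenlines of $A(q),B(q)$ are one-dimensional and distinct, so the $\Delta_1$-orthonormal frame diagonalizing them is exactly the unique one of Lemma \ref{propAB}. Hence $\mathcal{F}_*$ carries the distinguished frame at $p$ to the distinguished frame at $q$ while matching eigenvalues; in particular the unordered set $\{\theta_1,\theta_2,\theta_3\}\pmod\pi$ takes the same value at $p$ and at $q$. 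As $p,q$ are arbitrary and the $\theta_i$ are smooth and pairwise distinct on the connected manifold $M$, no two of them can ever cross, so each $\theta_i$ is individually constant.

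Once the frame is known to be transported by $\mathcal{F}_*$, the constancy of the remaining functions is immediate from invariance of structure. Indeed $\mathcal{F}$ restricts to an isometry of $(M,g)$, hence intertwines the Levi-Civita connections and sends $\nabla_{E_i(p)}E_j(p)$ to $\nabla_{E_i(q)}E_j(q)$; reading off coefficients in the transported frame yields $\omega_{ij}^k(p)=\omega_{ij}^k(q)$. Likewise, from the Gauss formula $\mathcal{F}$ preserves the second fundamental form, and being in $\isoo$ it commutes with $J$, so it sends $h(E_i(p),E_j(p))$ to $h(E_i(q),E_j(q))$ and $JE_k(p)$ to $JE_k(q)$, giving $h_{ij}^k(p)=h_{ij}^k(q)$. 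Since $p,q$ were arbitrary, all the $\theta_i$, $\omega_{ij}^k$ and $h_{ij}^k$ are constant.

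The step I expect to require the most care is the bookkeeping of signs and ordering in the uniqueness of the frame: a priori each $E_i$ is determined only up to sign, so one must check that the structure functions are insensitive to the residual symmetry (under $E_i\mapsto\varepsilon_i E_i$ one has $\omega_{ij}^k\mapsto\varepsilon_i\varepsilon_j\varepsilon_k\,\omega_{ij}^k$ and $h_{ij}^k\mapsto\varepsilon_i\varepsilon_j\varepsilon_k\,h_{ij}^k$), or else pin the signs down using the orientation data encoded in Table \eqref{tabla}, which $\mathcal{F}_*$ also preserves because it commutes with $G$. Everything else is a direct transport-of-structure argument.
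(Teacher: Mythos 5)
Your proposal is correct and takes essentially the same route as the paper: pick an isometry in $H\subset\isoo(\Sl\times\Sl)$ carrying $p$ to $q$, note that it preserves $P$ and $J$, and use the uniqueness of the type I frame to conclude that it transports the frame, hence the angles, and then (since it preserves $\nabla$, $h$ and commutes with $J$) the coefficients $\omega_{ij}^k$ and $h_{ij}^k$. Your additional care with the permutation of the angles (via the unordered-set-plus-connectedness argument) and with the residual sign ambiguity are refinements of points the paper's proof passes over silently, but the underlying argument is the same.
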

\begin{proof}
    We have to show that ${\theta_i}_p={\theta_i}_q$ for any two points $p$ and $q$ in $M$. 
    By hypothesis 
    there is a Lie subgroup $H$ of $\Sl\times\Sl\times\Sl$ such that $H$ acts transitively on $M$. 
    Therefore, there exists an isometry $\phi\in H$ such that $\phi(p)=q$. 
    We have that
    \[
    P_p{E_i}_p=\cos 2{\theta_i}_p{E_i}_p+\sin {2\theta_i}_pJ_p{E_i}_p
    \]
    In subsection \ref{seciso} we saw that isometries in $\Sl\times\Sl\times\Sl$ preserve $P$ and $J$. Thus, we apply $\phi$ to both sides:
    \begin{equation*}
        \begin{split}
            P_q \phi_*{E_i}_p=\phi_*P_p{E_i}_p&=\phi_*(\cos 2{\theta_i}_p{E_i}_p+\sin {2\theta_i}_pJ_p{E_i}_p)\\
            &=\cos 2{\theta_i}_p\phi_*{E_i}_p+\sin {2\theta_i}_p\phi_*J_p{E_i}_p\\
            &=\cos 2{\theta_i}_p\phi_*{E_i}_p+\sin {2\theta_i}_pJ_q\phi_*{E_i}_p\\
        \end{split}
    \end{equation*}
    Since $\{E_i\}_i$ is the unique frame with respect to which $A$ and $B$ are diagonal, we have $\phi_*{E_i}_p={E_i}_q$ and ${\theta_i}_q={\theta_i}_p$. 
    
    It follows from $\phi(M)=M$ that $\phi$ preserves $\nabla$ and $h$.
    Thus using a similar argument we get that $\omega_{ij}^k$ and $h_{ij}^k$ are constant.
\end{proof}
\subsubsection{Lagrangian submanifolds of type II}
\begin{proposition}\label{frameuniquecase2}
    Let $M$ be an extrinsically homogeneous Lagrangian submanifold of the pseudo-nearly Kähler $\Sl\times\Sl$. Suppose that $A$ and $B$ take the type II form in Lemma~\ref{propAB} with respect to a $\Delta_2$-orthonormal frame $\{E_1,E_2,E_3\}$.
    If $\theta_1\neq\theta_2$ modulo $\pi$ then the frame is unique up to signs. If instead $\theta_1=\theta_2$  there is a unique frame, up to signs, such that $h_{22}^1=g(h(E_2,E_2),JE_2)=0$. In both cases, the functions $\theta_i$, $h_{ij}^k$ and $\omega_{ij}^k$ are constant.
\end{proposition}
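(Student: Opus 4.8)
The plan is to follow the two-stage strategy of Proposition~\ref{frameunique}: first establish that, pointwise, a $\Delta_2$-orthonormal frame putting $A$ and $B$ in type~II form is essentially unique (up to signs, and in the degenerate case up to the extra condition $h_{22}^1=0$), and then use the transitive isometric action of $H$ to transport this canonical frame and conclude that the associated functions are constant.

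For the uniqueness I would work at a fixed tangent space $T_pM$ and treat it as a statement about the commuting, $g$-symmetric pair $(A,B)$ with $A^2+B^2=\id$. Reading off the type~II matrices of Lemma~\ref{propAB}, $E_1$ is a common eigenvector of $A,B$ with eigenvalue pair $(\cos2\theta_1,\sin2\theta_1)$, $E_3$ is a common eigenvector with pair $(\cos2\theta_2,\sin2\theta_2)$, and $E_2$ generates the single Jordan block on the null plane $\operatorname{span}(E_1,E_2)$. When $\theta_1\neq\theta_2$ modulo $\pi$ the two eigenvalue pairs differ, so the generalized eigenspace decomposition canonically splits $T_pM=\R E_3\oplus\operatorname{span}(E_1,E_2)$ into the spacelike eigenline and the degenerate Jordan plane. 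On that plane $E_1$ spans the one-dimensional image of $A-\cos2\theta_1\,\id$ and is thus fixed up to scaling; imposing $g(E_1,E_2)=1$, $g(E_2,E_2)=0$ and the normalization that the off-diagonal entry of $A$ equal exactly $1$ forces the scaling to $\pm1$ and removes all remaining freedom. Hence the frame is unique up to the sign group $(E_1,E_2,E_3)\mapsto(\varepsilon E_1,\varepsilon E_2,\varepsilon_3E_3)$ with $\varepsilon,\varepsilon_3\in\{\pm1\}$.

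The delicate point, and the step I expect to be the main obstacle, is the degenerate case $\theta_1=\theta_2$ modulo $\pi$ (which, using $2\theta_1+\theta_2\equiv0$, occurs only for $\theta_1\equiv\pi/3,2\pi/3$). Now the two eigenvalue pairs coincide, $\ker(A-\cos2\theta_1\,\id)$ becomes the two-dimensional degenerate plane $\operatorname{span}(E_1,E_3)$, and the clean splitting is lost. One then checks that the $\Delta_2$-isometries commuting with both $A$ and $B$ and preserving the type~II shape form a one-parameter family $T_\alpha$, namely $E_1\mapsto E_1$, $E_3\mapsto E_3+\alpha E_1$ and $E_2\mapsto E_2-\alpha E_3-\tfrac{\alpha^2}{2}E_1$. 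The proposal is to break this residual freedom with the single normalization $h_{22}^1=g(h(E_2,E_2),JE_2)=0$: transforming $h(E_2,E_2)$ under $T_\alpha$ by bilinearity shows that $h_{22}^1$ is a nonconstant affine function of $\alpha$, so the condition pins $\alpha$ down, leaving only the sign ambiguity. Verifying that the residual family is exactly one-dimensional and that $h_{22}^1$ genuinely depends on $\alpha$ is the computational heart of the argument.

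Finally, for constancy I would invoke homogeneity as in Proposition~\ref{frameunique}. Since $M$ is connected and $H\subset\isoo(\Sl\times\Sl)=\Sl\times\Sl\times\Sl$ acts transitively, its orbits all have full dimension, so the identity component $H_o$ already acts transitively; given $p,q\in M$ choose $\phi\in H_o$ with $\phi(p)=q$. As recalled in Section~\ref{seciso}, every such $\phi$ preserves $P$ and $J$, and because $\phi(M)=M$ it preserves $\nabla$ and $h$ as well. Hence $\phi_*$ carries the canonical type~II frame at $p$ (selected, when $\theta_1=\theta_2$, by the $\phi$-invariant condition $h_{22}^1=0$) to one at $q$; joining $\phi$ to the identity by a path and using that the sign factors are discrete upgrades this to $\phi_*(E_i)_p=(E_i)_q$ with no sign. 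The invariance of $P,J,\nabla$ and $h$ then forces $(\theta_i)_p=(\theta_i)_q$, $\omega_{ij}^k(p)=\omega_{ij}^k(q)$ and $h_{ij}^k(p)=h_{ij}^k(q)$, so all these functions are constant on $M$.
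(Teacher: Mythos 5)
Your overall strategy coincides with the paper's own proof: pointwise uniqueness via the eigenstructure of the commuting pair $(A,B)$, a residual one-parameter family $T_\alpha$ in the degenerate case $\theta_1=\theta_2$ broken by the normalization $h_{22}^1=0$, and transport of the canonical frame by the transitive action to obtain constancy. Indeed, your $T_\alpha$ is exactly the matrix $T$ appearing in the paper (with $t=\alpha$, $\varepsilon=1$), and your handling of the sign ambiguity in the constancy step, via transitivity of the identity component $H_o$ and discreteness of the signs along a path, is if anything more careful than the paper's one-line reduction to Proposition \ref{frameunique}.

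The gap sits precisely at the point you defer as ``the computational heart'': the assertion that $h_{22}^1$ is a \emph{nonconstant} affine function of $\alpha$. Bilinearity of $h$ together with Lemma \ref{nablapcase2} does give the affine formula $\tilde h_{22}^1=h_{22}^1-2\alpha\, h_{22}^3$ (and $\tilde h_{22}^3=h_{22}^3$), but nonconstancy is equivalent to $h_{22}^3\neq 0$, and this cannot follow from the frame transformation or from bilinearity alone --- it is a statement about the second fundamental form of the particular Lagrangian submanifold, not linear algebra. The paper closes it with a curvature argument: if $h_{22}^3=0$, the Gauss equation \eqref{Gauss} with $X=E_2$, $Y=E_3$, $Z=E_3$ yields the contradiction $1/6=3/2$; hence $h_{22}^3\neq 0$, and $\alpha=h_{22}^1/(2h_{22}^3)$ is the unique parameter achieving $\tilde h_{22}^1=0$. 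Without this input your normalization can fail in both directions: if $h_{22}^3=0$ and $h_{22}^1\neq 0$, no frame in the family satisfies $h_{22}^1=0$ (existence fails), while if $h_{22}^3=h_{22}^1=0$, every frame in the family satisfies it (uniqueness fails). You must therefore supply the Gauss-equation step, or an equivalent argument forcing $h_{22}^3\neq 0$, to make the degenerate case close; the remainder of your outline matches the paper's proof.
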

\begin{proof}
    The last statement follows from the uniqueness (even if it is up to sign) of the frame as in the proof of Proposition~\ref{frameunique}.

    Suppose that $\theta_1\neq\theta_2$ and that $\{\tilde{E}_1,\tilde{E}_2,\tilde{E}_3\}$ is a frame on $M$ such that
    \begin{equation*}
        \begin{split}
           P\tilde{E}_1&=\cos 2\tilde{\theta}_1\tilde{E}_1+\sin 2\tilde{\theta}_1J\tilde{E}_1,\\
           P\tilde{E}_2&=\tilde{E}_1+\cos 2\tilde{\theta}_1 \tilde{E}_2-\cot{2\tilde{\theta}_1}J\tilde{E}_1+\sin 2\tilde{\theta}_1J\tilde{E}_2\\
           P\tilde{E}_3&=\cos 2\tilde{\theta}_2\tilde{E}_3+\sin 2\tilde{\theta}_2 J\tilde{E}_3.
        \end{split}
    \end{equation*} 
    Hence, at any point of $M$ the eigenvalues of $A$ and $B$ are $\{\cos 2\theta_1,\cos 2\theta_2\}$ and $\{\sin2\theta_1,\sin2\theta_2\}$, respectively. Moreover, the associated eigenvectors are the same.
    The eigenspace associated to $\cos 2\theta_1$ and $\sin2\theta_1$ is lightlike and the one associated to $\cos 2\theta_2$ and $\sin2\theta_2$ is spacelike, therefore $\tilde{\theta}_1=\theta_1$ and $\tilde{\theta}_2=\theta_2$ modulo $\pi$.
     As the eigenvectors $E_3$  and $\tilde{E}_3$ are both unit length, we derive that $\tilde{E}_3=\pm E_3$. 
     Similarly, we get $\tilde{E}_1=cE_1$ for $c\in\R$ and therefore $\tilde{E}_2=c^{-1}E_2$. Computing $A\tilde{E}_2$ we produce 
    \[
    A\tilde{E}_2=\frac{1}{c}(E_1+\cos2\theta_1E_2)=\frac{1}{c^2}\tilde{E_1} +  \cos2\theta_1 \tilde{E_2}
    \]
    thus $c^2=1$. Since $\{\tilde{E}_i\}$ also has to satisfy the relations in \eqref{tabla} we obtain $\tilde{E_3}=E_3$.

    Suppose now that $\theta_2=\theta_1$ modulo $\pi$.
    This means that the eigenspace associated to $\cos 2\theta_1$ is two-dimensional. 
    Therefore any linear isometry that preserves the eigenspace preserves the form of $A$ and $B$. 
    Let $T$ be the linear isometry defined by $T E_i=\Tilde{E}_i$. 
    After some computations we obtain that $T$ has the form 
\[
T=\begin{pmatrix}
    \varepsilon & -\varepsilon\frac{t^2}{2} & t \\
    0 &\varepsilon & 0\\
    0 & -\varepsilon t & 1 
\end{pmatrix},
\]
for $\varepsilon=\pm1$ and some $t\in\R$.

Computing $h(\tilde{E}_2,\tilde{E}_2)$ and using Lemma \ref{nablapcase2} yields $\tilde{h}_{22}^1=h_{22}^1-2t h_{22}^3$ and $\tilde{h}_{22}^3=h_{22}^3$.  
Suppose that $h_{22}^3=0$, then the Gauss equation (\ref{Gauss}) with $X=E_2,Y=E_3,Z=E_3$ implies $1/6=3/2$, a contradiction. 
Then we can choose $t=h_{22}^1/(2h_{22}^3)$, thus $\Tilde{h}_{22}^1=0$. 
In the same way, we can obtain that it is the unique (up to sign) frame with this condition.
As before, we conclude that $\omega_{ij}^k$ and $h_{ij}^k$ are constant for this frame.
\end{proof}

\subsubsection{Lagrangian submanifolds of type III}
\begin{proposition} \label{constanttype3}
        Let $M$ be an extrinsically homogeneous Lagrangian submanifold of the pseudo-nearly Kähler $\Sl\times\Sl$. 
        Suppose that  $A$ and $B$ take type III form in Lemma~\ref{propAB} with respect to a $\Delta_2$-orthogonal frame $\{E_1,E_2,E_3\}$. 
        Then the frame is unique and the functions $\theta_i$, $h_{ij}^k$ and $\omega_{ij}^k$ are constant.
\end{proposition}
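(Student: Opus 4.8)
The plan is to follow the template of Proposition~\ref{frameunique}: first establish that the $\Delta_2$-orthonormal frame realizing the type III form is genuinely unique, and then transport the geometric data along the transitive action of $H$ to conclude that all the structure functions are constant. Note at the outset that in type III the matrices $A$ and $B$ of Lemma~\ref{propAB} carry no angle functions at all, so the claim ``$\theta_i$ constant'' is vacuous and all the content lies in the uniqueness of the frame and the constancy of $h_{ij}^k$ and $\omega_{ij}^k$.

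For the uniqueness, the key observation is that, writing $N=A+\tfrac12\id$, a direct computation in the frame $\{E_1,E_2,E_3\}$ shows $N^3=0$ but $N^2\neq 0$; that is, $A$ is a single Jordan block with triple eigenvalue $-\tfrac12$. Hence $N$ determines the Jordan chain $E_2\mapsto E_3=NE_2\mapsto E_1=N^2E_2\mapsto 0$ intrinsically: $E_1$ spans $\operatorname{Im}N^2=\ker N$, the plane $\ker N^2=\langle E_1,E_3\rangle$ is canonical, and once a chain-top $E_2$ is chosen the vectors $E_3=NE_2$ and $E_1=N^2E_2$ are forced. I would then impose $\Delta_2$-orthonormality on a general chain-top $\tilde E_2=\alpha E_2+\beta E_3+\gamma E_1$ (with $\alpha\neq 0$): the normalization conditions collapse, via $g(\tilde E_3,\tilde E_3)=1$, $g(\tilde E_2,\tilde E_3)=0$ and $g(\tilde E_2,\tilde E_2)=0$, to $\alpha=\pm1$ and $\beta=\gamma=0$, so the only remaining ambiguity is the global sign $E_i\mapsto -E_i$.

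Ruling out this sign is the step I expect to be the main obstacle. Because $A$ and $B$ commute (see \eqref{proppe}) and $A$ is non-derogatory, $B$ is a polynomial in $A$; consequently $B$ shares all invariant subspaces of $A$ and imposes no new constraint, so the remaining sign cannot be fixed by $A$ and $B$ alone. Here the orientation encoded by $G$ must enter: the relations of Table~\eqref{tabla}, for instance $JG(E_1,E_2)=\sqrt{\tfrac23}\,E_3$, are not invariant under $E_i\mapsto -E_i$, since bilinearity of $G$ gives $JG(-E_1,-E_2)=JG(E_1,E_2)$ while the right-hand side $\sqrt{\tfrac23}\,E_3$ changes sign. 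Thus exactly one of the two signs is compatible with \eqref{tabla}, and the frame is unique.

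Finally, with uniqueness in hand, the constancy of $h_{ij}^k$ and $\omega_{ij}^k$ follows as in Proposition~\ref{frameunique}: given $p,q\in M$ choose $\phi\in H\subset\Sl\times\Sl\times\Sl$ with $\phi(p)=q$; since $\phi$ preserves $g$, $J$ and $P$, it carries the type III frame at $p$ to a type III frame at $q$, which by uniqueness must be $\{E_i|_q\}$, so $\phi_*E_i|_p=E_i|_q$. As $\phi(M)=M$, the isometry $\phi$ also preserves $\nabla$ and $h$, whence $\omega_{ij}^k$ and $h_{ij}^k$ agree at $p$ and $q$; since $p,q$ are arbitrary, these functions are constant.
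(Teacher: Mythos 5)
Your proof is correct, and its skeleton is the paper's: prove the frame is unique, fix the residual sign via the relation $JG(E_1,E_2)=\sqrt{\tfrac{2}{3}}E_3$ from \eqref{tabla}, and then transport the frame by the transitive $H$-action to conclude that $h_{ij}^k$ and $\omega_{ij}^k$ are constant (your last step is exactly the paper's, which just points back to Proposition \ref{frameunique}). Where you genuinely differ is in how uniqueness is executed. The paper introduces the transition matrix $T$ with $TE_i=\tilde E_i$ and determines its entries one at a time, alternating between the metric conditions, the equivariance $ATE_2=-\tfrac12 TE_2+TE_3$, and finally the $G$-relation to force the remaining sign $\varepsilon=1$; the tensor $B$ never needs to be discussed. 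You instead observe that $N=A+\tfrac12\id$ is a regular nilpotent (single Jordan block), so any admissible frame is a Jordan chain $\tilde E_2\mapsto N\tilde E_2=\tilde E_3\mapsto N^2\tilde E_2=\tilde E_1$, and $\Delta_2$-orthonormality collapses the chain-top to $\pm E_2$; moreover, since $A$ is non-derogatory, its centralizer consists of polynomials in $A$ (indeed $B=\pm\bigl(\tfrac{\sqrt3}{2}\id+\tfrac{1}{\sqrt3}N-\tfrac{4}{3\sqrt3}N^2\bigr)$), which explains structurally why the commuting tensor $B$ from \eqref{proppe} can impose no extra condition and why only the orientation carried by $G$ can kill the sign ambiguity. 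Your route buys conceptual clarity and a cleaner accounting of where the $\pm$ lives; the paper's route is more elementary and stylistically uniform with Propositions \ref{frameuniquecase2} and \ref{frameuniquecase4}. One point worth making explicit in your final step: for $\{\phi_*E_i\}$ to be an admissible frame in the sense your uniqueness statement covers, it must still satisfy \eqref{tabla}, so you need $\phi$ to preserve $G$; this is automatic, since elements of $\isoo(\Sl\times\Sl)$ preserve $g$ and $J$ and hence $G=\tilde\nabla J$, but it is the hinge on which the sign-rigidity (and thus the constancy of sign-sensitive quantities like $h_{22}^2$) turns.
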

\begin{proof}
    Suppose that $\{\tilde{E}_i\}_i$ is another $\Delta_2$-orthonormal frame wiht respect to which $A$ and $B$ take type III form in Lemma~\ref{propAB}. We denote by $T$ the linear isometry given by $TE_i=\tilde{E}_i$.
         We write 
    \[
    T=\begin{pmatrix}
        t_{11} & t_{12} & t_{13}\\
        t_{21} & t_{22} & t_{23} \\
        t_{31} & t_{32} & t_{33}
     \end{pmatrix}.
    \]
    First notice that $E_1$ spans the unique eigenspace of $A$ and $B$, thus $t_{21}=t_{31}=0$. 
    Computing $g(TE_1,TE_2)$ we furnish $t_{22}t_{11}=1$. 
    In the same way, computing $g(TE_1,TE_3)$ and $g(TE_3,TE_3)$ we come to $t_{23}=0$ and $t_{33}=\varepsilon=\pm1$. 
    Computing $ATE_2=-\tfrac{1}{2} TE_2 +TE_3$ we get that $t_{13}=t_{32}$ and $t_{11}=\varepsilon$. 
    Computing $g(E_2,E_3)$ and $g(E_2,E_2)$ we obtain $t_{13}=0$ and $t_{12}=0$. 
    By asking $JG(TE_1,TE_2)=\sqrt{\tfrac{2}{3}}TE_3$ we see that $\varepsilon=1$. 

    As in the previous propositions, the last statement follows from the uniqueness of the frame.
\end{proof}
\subsubsection{Lagrangian submanifolds of type IV}
\begin{proposition}\label{frameuniquecase4}
        Let $M$ be an extrinsically homogeneous Lagrangian submanifold of the pseudo-nearly Kähler $\Sl\times\Sl$. Suppose that $A$ and $B$ take type IV form in Lemma \ref{propAB}  with respect to $\Delta_3$-orthonormal frame $\{E_1,E_2,E_3\}$. Then the frame is unique and the functions $\theta_i$, $\psi$, $h_{ij}^k$ and $\omega_{ij}^k$ are constant.
\end{proposition}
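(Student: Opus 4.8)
The plan is to follow the template of Propositions \ref{frameunique} and \ref{constanttype3}: the constancy of $\theta_i$, $\psi$, $h_{ij}^k$ and $\omega_{ij}^k$ is a formal consequence of the uniqueness of the frame together with homogeneity, so the real work is to prove that the $\Delta_3$-orthonormal frame realizing the type IV form of $A$ and $B$ is unique. Thus I would first suppose $\{\tilde{E}_1,\tilde{E}_2,\tilde{E}_3\}$ is a second such frame and let $T$ be the linear isometry with $TE_i=\tilde{E}_i$. Since the values $\cos 2\theta_2,\sin 2\theta_2$ and $\cosh\psi\cos 2\theta_1,\ \sinh\psi\sin\theta_2,\ \cosh\psi\sin 2\theta_1,\ \sinh\psi\cos\theta_2$ are recovered from the frame-independent eigenvalue data of $A$ and $B$ at the point, the angle parameters of the two frames coincide; consequently $T$ must commute with both $A$ and $B$ as endomorphisms of $T_pM$.

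The decisive structural feature, and the step I expect to be the main obstacle, is the behaviour of the $2\times 2$ block of $A$ (equivalently $B$) on the timelike plane $\langle E_1,E_2\rangle$. Writing $K=\left(\begin{smallmatrix}0&1\\-1&0\end{smallmatrix}\right)$, this block is $\cosh\psi\cos 2\theta_1\,\mathrm{Id}+\sinh\psi\sin\theta_2\,K$, and because $\psi\neq 0$ and $\theta_2\neq 0,\pi$ (Lemma \ref{propAB}) its off-diagonal coefficient $\sinh\psi\sin\theta_2$ is nonzero. Hence the block has the non-real eigenvalues $\cosh\psi\cos 2\theta_1\pm i\,\sinh\psi\sin\theta_2$, so the plane contains no real eigenvector of $A$ and $\langle E_3\rangle$ is the unique real eigenline. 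Any $T$ commuting with $A$ therefore preserves $\langle E_3\rangle$ and its $g$-orthogonal complement $\langle E_1,E_2\rangle$; since $E_3$ is unit spacelike this gives $TE_3=\pm E_3$ and $T$ restricts to an isometry $T_0$ of the Lorentzian plane.

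It remains to pin down $T_0$. As $s:=\sinh\psi\sin\theta_2\neq 0$, commuting with the block forces $T_0$ to commute with $K$, and the commutant of $K$ is the two-dimensional algebra $\{a\,\mathrm{Id}+bK\}$. Imposing that $a\,\mathrm{Id}+bK$ be an isometry of the plane with metric $\mathrm{diag}(1,-1)$ yields $a^2-b^2=1$ and $ab=0$, whose only real solutions are $T_0=\pm\mathrm{Id}$. Finally I would use the tensor $G$: since the isometries in $\Sl\times\Sl\times\Sl$ preserve $J$ and $G$, the relations in \eqref{tabla} for $\Delta_3$ must be preserved, and $JG(E_1,E_2)=\sqrt{\tfrac{2}{3}}\,E_3$ together with $T_0=\pm\mathrm{Id}$ forces $TE_3=+E_3$. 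This identifies the frame up to the harmless simultaneous flip $(E_1,E_2,E_3)\mapsto(-E_1,-E_2,E_3)$, which fixes all of $A$, $B$ and the relations in \eqref{tabla}.

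With uniqueness in hand, constancy follows exactly as in Proposition \ref{frameunique}. For $\phi\in H$ with $\phi(p)=q$, the differential $\phi_*$ preserves $P$, $J$ and $G$, so it carries $\{E_i|_p\}$ to a $\Delta_3$-orthonormal frame at $q$ realizing the type IV form; by uniqueness $\phi_*E_i|_p=\pm E_i|_q$, and the residual sign is $+1$ because it is a locally constant $\pm 1$-valued function on the connected identity component of $H$ equal to $1$ at the identity. Reading off $A$ and $B$ then gives ${\theta_i}_q={\theta_i}_p$ and $\psi_q=\psi_p$, and since $\phi$ preserves $\nabla$ and $h$ (as $\phi(M)=M$) the same argument makes $h_{ij}^k$ and $\omega_{ij}^k$ constant. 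The crux of the whole proof is the indefinite linear algebra of the second paragraph: unlike the diagonalizable types I–III, here the distinguished direction is singled out by the non-real spectrum of a $g$-self-adjoint operator on a Lorentzian plane.
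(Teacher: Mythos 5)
Your proposal is correct in substance and shares the paper's skeleton: prove the frame is (essentially) unique, then deduce constancy from homogeneity as in Proposition \ref{frameunique}. Even your first and last steps coincide with the paper's: the paper also starts from the observation that $\cos 2\theta_2$ is the \emph{only} real eigenvalue (i.e.\ the block on $\mathrm{span}\{E_1,E_2\}$ has non-real spectrum because $\sinh\psi\sin\theta_2\neq 0$), which forces $\tilde{E}_3=\varepsilon E_3$ and $\tilde{\theta}_2=\theta_2$, and it also uses the relation $JG(\tilde E_1,\tilde E_2)=\sqrt{\tfrac{2}{3}}\tilde E_3$ from \eqref{tabla} to fix $\varepsilon=1$ at the end. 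The difference is the middle step: the paper writes $T$ explicitly as $\left(\begin{smallmatrix}\cosh t & \sinh t\\ \sinh t & \cosh t\end{smallmatrix}\right)\oplus\varepsilon$, imposes the type IV form on $\{\tilde{E}_i\}$, and solves the resulting linear system to get $\tilde{\alpha}=\alpha$, $\tilde{\beta}=\beta$ and then $t=0$; you instead argue invariantly that $T_0$ lies in the commutant $\{a\id+bK\}$ of $K$ and intersect it with the isometries of $\mathrm{diag}(1,-1)$ to get $T_0=\pm\id$. Your route is more structural, and it honestly records the residual flip $(E_1,E_2,E_3)\mapsto(-E_1,-E_2,E_3)$ together with the locally-constant-sign argument needed to dispose of it --- a case the paper's boost parametrization of $T$ silently omits.

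There is, however, one step you must repair: ``the angle parameters coincide, hence $T$ commutes with $A$ and $B$'' does not follow from spectral data alone. The non-real eigenvalues occur as a conjugate pair, so the spectrum determines $\beta=\sinh\psi\sin\theta_2$ and $\delta=\sinh\psi\cos\theta_2$ only up to a common sign: the second frame could realize the type IV form with $\psi$ replaced by $-\psi$, in which case $T_0$ conjugates $K$ to $-K$ rather than commuting with it. Your own machinery handles this case: solutions of $T_0KT_0^{-1}=-K$ form $\{aD+bDK\}$ with $D=\mathrm{diag}(1,-1)$, whose isometries are $T_0=\pm D$; the table \eqref{tabla} then forces $\tilde{E}_3=-E_3$, so the extra frames are $(\pm E_1,\mp E_2,-E_3)$, carrying angle $-\psi$. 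Thus the frame is unique only up to these discrete flips, but that is all the constancy argument needs: $\psi$ is continuous and nonvanishing, so the same connectedness argument you invoke for the sign of $\phi_*E_i$ shows the sign of $\psi$ cannot jump, and the constancy of $\theta_i$, $\psi$, $h_{ij}^k$, $\omega_{ij}^k$ follows. Note that the printed proof has the mirror image of this gap --- it only allows $T_0$ in the identity component $\mathrm{SO}^+(1,1)$ --- so once you add this case your argument is, if anything, more complete than the paper's.
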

\begin{proof}
    In order to simplify the proof we write $A$ and $B$ as 
    \[
    A=\begin{pmatrix}
        \alpha & \beta & 0\\
        -\beta & \alpha & 0\\
        0 & 0 & \cos2\theta_2\\
    \end{pmatrix}, \ \ \ \ B=\begin{pmatrix}
        \gamma & \delta & 0\\
        -\delta & \gamma & 0\\
        0 & 0 & \sin2\theta_2\\
    \end{pmatrix}, 
    \]
    with respect to $\{E_i\}_i$.
    
    Suppose there exists a $\Delta_3$-orthonormal frame $\{\tilde{E}_1,\tilde{E}_2,\tilde{E}_3\}$ and functions $\tilde{\alpha}$, $\tilde{\beta}$, $\tilde{\gamma}$, $\tilde{\delta}$ and $\tilde{\theta}_2$ such that $A$ and $B$ take the form
    \[
    A=\begin{pmatrix}
        \tilde{\alpha} & \tilde{\beta} & 0\\
        -\tilde{\beta} & \tilde{\alpha} & 0\\
        0 & 0 & \cos2\tilde{\theta}_2\\
    \end{pmatrix}, \ \ \ \ B=\begin{pmatrix}
        \tilde{\gamma} & \tilde{\delta} & 0\\
        -\tilde{\delta} & \tilde{\gamma} & 0\\
        0 & 0 & \sin2\tilde{\theta}_2\\
    \end{pmatrix}, 
    \]
    with respect to $\{\tilde{E}_i\}_i$.

Since $\cos2\theta_2$ and $\sin{2\theta_2}$ are the only eigenvalues of $A$ and $B$, $\tilde{\theta}_2=\theta_2$ modulo $\pi$ and $\tilde{E_3}=\varepsilon E_3$ with $\varepsilon=\pm1$.
We denote by $T$ the linear isometry defined by $TE_i=\tilde{E}_i$. 
Given that $\{\tilde{E}_i\}$ is an $\Delta_3$-orthonormal frame, we may assume that $\tilde{E}_1$ and $\tilde{E}_2$ do not have components in the direction of $E_3$. 
Hence, we can write $T$ as 
\[
T=\begin{pmatrix}
    \cosh t & \sinh t & 0\\
    \sinh t & \cosh t & 0\\
    0&0 & \varepsilon\\
\end{pmatrix}.
\]
with $t\in\R$.
Requiring $ATE_1=(\tilde{\alpha} TE_1-\tilde{\beta}TE_2)$ and $ATE_2=(\tilde{\beta} TE_1+\tilde{\alpha}TE_2)$ we obtain
\begin{equation*}
    \begin{split}
        \alpha \cosh t +\beta \sinh t &=\tilde{\alpha} \cosh t -\tilde{\beta} \sinh t ,\\
         -\beta \cosh t +\alpha \sinh t&=-\tilde{\beta} \cosh t +\tilde{\alpha} \sinh t,  \\
         \beta \cosh t +\alpha \sinh t  &=\tilde{\beta} \cosh t +\tilde{\alpha} \sinh t, \\
         \alpha \cosh t -\beta \sinh t&= \tilde{\alpha} \cosh t +\tilde{\beta} \sinh t.
    \end{split}
\end{equation*}
Combining these equations we get
\begin{equation*}
    \begin{split}
        (\alpha-\tilde{\alpha}) \cosh t=0\\
        (\beta-\tilde{\beta}) \cosh t=0\\
    \end{split}
\end{equation*}
therefore $\tilde{\alpha}=\alpha$ and $\tilde{\beta}=\beta$. 
We may use the same argument to deduce that $\tilde{\delta}=\delta$ and $\tilde{\gamma}=\gamma$. 
We compute again $ATE_1=(\tilde{\alpha} TE_1-\tilde{\beta}TE_2)$ and we derive $t=0$. 
From $JG(\tilde{E}_1,\tilde{E}_2)=\sqrt{\tfrac{2}{3}}\tilde{E}_3$ it follows that $\varepsilon=1$.

In a similar way as in the proofs of propositions \ref{frameunique}-\ref{constanttype3} we obtain that $\alpha$, $\beta$, $\gamma$ and $\delta$ are constant. Computing $\tfrac{\alpha}{\delta}$ and $\tfrac{\alpha}{\beta}$ we obtain that $\psi$, $\theta_1$ and $\theta_2$ are constant as well. Finally using the uniqueness of the frame we get that $h_{ij}^k$ and $\omega_{ij}^k$ are constant.

\end{proof}

\subsection{Extrinsically homogeneous Lagrangian submanifolds of type I}
The following proposition proven in \cite{anarella} gives us a characterization of totally geodesic Lagrangian submanifolds of type~I:
\begin{proposition}\label{twoanglesequal}
    Let $M$ be a Lagrangian submanifold of $\Sl\times\Sl$ of type I in Lemma \ref{propAB}. If two angles are equal modulo $\pi$, then $M$ is totally geodesic.
    \end{proposition}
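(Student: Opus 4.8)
The plan is to assume, after relabelling, that $\theta_2\equiv\theta_3$ modulo $\pi$ and to show that every component $h_{ij}^k$ of the second fundamental form vanishes. Throughout I work in the $\Delta_1$-orthonormal frame of Lemma~\ref{propAB}, so that $\delta_1=-1$, $\delta_2=\delta_3=1$, and I combine Lemma~\ref{lemmacase1} with the $\Delta_1$-symmetries $h_{ij}^k=h_{ji}^k=\delta_j\delta_k h_{ik}^j$. It is convenient to set $T_{ijk}=\delta_k h_{ij}^k=g(h(E_i,E_j),JE_k)$: these symmetries say precisely that $T$ is totally symmetric, while minimality (Proposition~\ref{minimal}) reads $-T_{11k}+T_{22k}+T_{33k}=0$ for each $k$.

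I first dispose of the degenerate subcase $\theta_1\equiv\theta_2\equiv\theta_3$. Here $\cos(\theta_j-\theta_k)=\pm1\neq0$ for every pair, so \eqref{sffc} forces $h_{ij}^k=0$ whenever $j\neq k$; total symmetry of $T$ then kills every $T_{ijk}$ with two distinct indices, and minimality gives $T_{iii}=0$ as well, so $h\equiv0$. In the remaining case $\theta_1\not\equiv\theta_2$, which by the Type~I constraint $\theta_1\equiv-2\theta_2$ means $\sin 3\theta_2\neq 0$ (since $\theta_j-\theta_1\equiv 3\theta_2$), I reduce $h$ to a single unknown $a$ in three moves. Applying \eqref{sffc} to $(j,k)=(2,3)$, where $\sin(\theta_2-\theta_3)=0$ and $\cos(\theta_2-\theta_3)\neq0$, gives $h_{i2}^3=0$ for all $i$, i.e.\ $T_{123}=T_{223}=T_{233}=0$; this in particular settles the one component $h_{12}^3$ not governed by \eqref{anglederi}. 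Next, $\theta_2\equiv\theta_3$ gives $E_i(\theta_2)=E_i(\theta_3)$, so \eqref{anglederi} yields $h_{22}^i=h_{33}^i$, that is $T_{22i}=T_{33i}$; with the previous step this produces $T_{222}=T_{333}=0$ and $T_{122}=T_{133}=:a$. Finally minimality fixes $T_{111}=2a$ and $T_{112}=T_{113}=0$, so the only possibly nonzero components are $h_{11}^1=-2a$, $h_{22}^1=h_{33}^1=-a$ and $h_{12}^2=h_{13}^3=a$.

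It remains to prove $a=0$, which is where I expect the real content to lie. From \eqref{anglederi} and the list above one reads $E_2(\theta_2)=E_3(\theta_2)=0$ as functions, while $E_1(\theta_2)=-a$. Feeding the vanishing components $h_{23}^1=h_{32}^1=0$ into \eqref{sffc} for the pairs $(j,k)=(3,1)$ and $(2,1)$, and using $\sin 3\theta_2\neq 0$, pins down the connection coefficients $\omega_{23}^1=-\tfrac{1}{\sqrt6}$ and $\omega_{32}^1=\tfrac{1}{\sqrt6}$, the opposite signs coming from $\varepsilon_{231}=-\varepsilon_{321}$. Now I would exploit the identity $[E_2,E_3]\theta_2=E_2(E_3\theta_2)-E_3(E_2\theta_2)$: its right-hand side is identically $0$ because $E_2\theta_2$ and $E_3\theta_2$ vanish as functions, whereas expanding $[E_2,E_3]=\nabla_{E_2}E_3-\nabla_{E_3}E_2$ gives $[E_2,E_3]\theta_2=(\omega_{23}^1-\omega_{32}^1)E_1(\theta_2)=\tfrac{2}{\sqrt6}\,a$. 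Comparing the two evaluations forces $a=0$, hence $h\equiv0$ and $M$ is totally geodesic.

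The main obstacle is not any single computation but the bookkeeping: getting the $\Delta_1$-symmetries of $h_{ij}^k$ and the signs of $\varepsilon_{ij}^k$ right, so that $\omega_{23}^1-\omega_{32}^1$ is manifestly nonzero, and cleanly separating the degenerate all-angles-equal subcase (where $\sin 3\theta_2=0$ and these connection coefficients are undetermined) from the generic one. Once the reduction to the single parameter $a$ is in place, the integrability identity for $\theta_2$ finishes the argument essentially for free.
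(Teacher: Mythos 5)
Your computations are sound in the case you actually treat: I checked the reduction to the single parameter $a$ (total symmetry of $T_{ijk}$, Lemma~\ref{lemmacase1}, minimality) and the final integrability identity $[E_2,E_3]\theta_2=0$ versus $(\omega_{23}^1-\omega_{32}^1)E_1(\theta_2)$, and they are correct. (For context: the paper itself does not prove this proposition; it quotes it from~\cite{anarella}, so your argument has to stand on its own.) The genuine gap is the very first move, ``assume, after relabelling, that $\theta_2\equiv\theta_3$.'' The three indices are \emph{not} interchangeable here: the frame is $\Delta_1$-orthonormal, so $E_1$ is timelike while $E_2,E_3$ are spacelike, and every equation you use --- \eqref{anglederi}, \eqref{sffc}, the $\Delta_1$-symmetries of $h_{ij}^k$, and minimality $-T_{11k}+T_{22k}+T_{33k}=0$ --- carries the signs $\delta_i$. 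Since any isometry or change of frame preserves causal character, the only permutation available is $E_2\leftrightarrow E_3$; the angle attached to the timelike direction can never be moved into slot $2$ or $3$. Hence the case in which one of the two equal angles is $\theta_1$, i.e.\ $\theta_1\equiv\theta_2$ or $\theta_1\equiv\theta_3$ modulo $\pi$, is simply not covered. This case is not vacuous: for the totally geodesic example $u\mapsto(u,\ii u\ii)$ of Theorem~\ref{totgeo} the angles are $\bigl(\tfrac{\pi}{2},0,\tfrac{\pi}{2}\bigr)$, so the equal pair consists of the timelike angle and one spacelike angle, exactly the configuration your proof omits.

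The omission is repairable with the same strategy, but the bookkeeping changes and must be written out. With $\theta_1\equiv\theta_2$ and (generically) $\sin 3\theta_1\neq0$: equation \eqref{sffc} with $(j,k)=(1,2)$ gives $h_{i1}^2=0$ for all $i$, i.e.\ $T_{112}=T_{122}=T_{123}=0$; equality of the angle derivatives now reads $T_{22i}=-T_{11i}$ (note the sign, coming from $\delta_1\delta_2=-1$, in contrast to $T_{22i}=T_{33i}$ in your case), which kills $T_{111}$ and $T_{222}$; minimality then gives $T_{33k}=2T_{11k}$, so the surviving components are $T_{113}=a$, $T_{223}=-a$, $T_{333}=2a$. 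From \eqref{anglederi} one gets $E_1(\theta_1)=E_2(\theta_1)=0$ and $E_3(\theta_1)=a$, while \eqref{sffc} with $h_{12}^3=0$ pins down $\omega_{12}^3-\omega_{21}^3=\tfrac{2}{\sqrt6}\varepsilon_{12}^3\neq0$, and the identity $[E_1,E_2]\theta_1=0$ forces $a=0$. Until this second case (together with the continuity argument gluing the generic and all-angles-equal regions) is included, the proof is incomplete as written.
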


    Moreover, in the same paper all the totally geodesic Lagrangian submanifolds are classified up to congruence by the following theorem, which we rewrite to fit better in this article.
\begin{theorem}\label{totgeo}
Let $f:(M,g)\to\Sl\times\Sl$  be a totally geodesic Lagrangian submanifold of the pseudo-nearly Kähler $\Sl\times\Sl$. Then $f(M)$ is congruent to an open subset of the following extrinsically homogeneous Lagrangian embeddings: 
\begin{equation*}
    \begin{tblr}{c c c}
    \hline
    (M,g) & f & H \\
        \hline
    (\Sl, \frac{2}{3}\li,\ri) & u\mapsto (u,u) & \{ (u,u,\id_2):u\in\Sl\} \\
    (\Sl,g^+_{\kappa,\tau})  & u\mapsto (u,\ii u\ii) & \{(u,\ii u\ii,\id_2):u\in \Sl\}\\
    (\Sl,g^-_{\kappa,\tau}) & u\mapsto (u,-\kk u\kk) & \{(u,-\kk u\kk,\id_2):u\in \Sl\}\\
    \hline
\end{tblr}
\end{equation*}
where $H$ acts transitively on $M$ with null isotropy, and $g_{\kappa,\tau}^+$, $g_{\kappa,\tau}^-$ are Berger-like metrics on $\Sl$ stretched in spacelike and timelike directions, respectively. 
\end{theorem}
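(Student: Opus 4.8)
The plan is to exploit the fact that total geodesy means $h\equiv 0$, so that the Gauss equation \eqref{Gauss} and the Codazzi equation \eqref{Codazzi} collapse to purely algebraic identities in the endomorphisms $A,B$ coming from $P|_M=A+JB$. First I would use Codazzi to eliminate types II, III and IV and to pin down the shape of $A,B$ in the surviving type I; then I would normalise the remaining angle parameter using the isometries of Theorem \ref{groupofisometries1}; and finally I would match each normalised datum with one of the three model orbits, verifying the converse by direct computation.

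Setting $h=0$ in \eqref{Codazzi} gives, for all tangent $X,Y,Z$,
\[
0=g(AY,Z)JBX-g(AX,Z)JBY-g(BY,Z)JAX+g(BX,Z)JAY.
\]
Applying $-J$ (so that $-J\circ J=\id$) turns this into the tangent-valued identity $g(AY,Z)BX-g(AX,Z)BY-g(BY,Z)AX+g(BX,Z)AY=0$. Evaluating on frame vectors rules out the non-diagonal types: for type II, taking $X=E_1$, $Y=Z=E_2$ produces $\tfrac{2}{\sin 2\theta_1}E_1$, which is nonzero because $\theta_1\neq 0,\pi/2$ — the same contradiction already visible through the Gauss equation in the proof of Proposition \ref{frameuniquecase2} — and the analogous evaluations with the explicit matrices of Lemma \ref{propAB} dispose of types III and IV. Hence $M$ is of type I, with $A=\operatorname{diag}(\cos 2\theta_i)$ and $B=\operatorname{diag}(\sin 2\theta_i)$, and the identity reduces to $\sin 2(\theta_i-\theta_j)=0$ for all $i,j$; that is, the unit vectors $(\cos 2\theta_i,\sin 2\theta_i)$ are pairwise collinear. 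Equivalently there is a constant $\phi$ with $\sin 2\phi\,A=\cos 2\phi\,B$, and together with $A^2+B^2=\id$ this forces $A=\cos 2\phi\,S$, $B=\sin 2\phi\,S$ for an involution $S=\operatorname{diag}(\pm 1)$.

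Next I would normalise $\phi$. By Lemma \ref{isometrieswithAB}, the isometries $\Psi_{0,\tau}$ of \eqref{isoslsl} with $\tau\in\{0,2\pi/3,4\pi/3\}$ act on the angles by $\theta_i\mapsto\theta_i-\tau/2$, and hence shift $\phi$ by multiples of $\pi/3$. The constraint $\theta_1+\theta_2+\theta_3=0$ modulo $\pi$ from Lemma \ref{propAB} forces $\phi$ to be a multiple of $\pi/6$, so these shifts allow me to assume $\phi\equiv 0$ modulo $\pi/2$, i.e.\ $B=0$ and $A=\pm S$ an involution. The angle-sum constraint then permits exactly two eigenvalue multisets for $A$, namely $\{1,1,1\}$ and $\{1,-1,-1\}$ (the pattern $\{1,1,-1\}$ is excluded since its angle sum is $\pi/2$, and $\{-1,-1,-1\}$ likewise). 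The first multiset gives $A=\id$, and the Gauss equation \eqref{Gauss} returns constant curvature $K=-\tfrac32$; this is the all-angles-equal situation, consistent with Proposition \ref{twoanglesequal}. In the second multiset the $(+1)$-eigenspace is a line and the $(-1)$-eigenspace a plane, and the two geometrically distinct possibilities are distinguished by where the timelike frame direction sits: this is precisely the split between stretching in a spacelike and a timelike direction, producing the Berger-like metrics $g^+_{\kappa,\tau}$ and $g^-_{\kappa,\tau}$ together with the two mixed sectional curvatures $-\tfrac32$ and $-\tfrac16$.

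To realise the three normalised data I would compute $P|_M$, $J$ and $h$ for the candidate maps directly from \eqref{prodstructuredef}, \eqref{defJ} and \eqref{connectionr8}. Each map is Lagrangian and totally geodesic, and a short computation with the split-quaternions \eqref{ijksl2R} shows that $u\mapsto(u,u)$ gives $A=\id$, $B=0$, that $u\mapsto(u,\ii u\ii)$ gives $B=0$ and $A=\operatorname{diag}(1,-1,-1)$ in the $\ii,\jj,\kk$-frame, and that $u\mapsto(u,-\kk u\kk)$ gives $B=0$ and $A=\operatorname{diag}(-1,-1,1)$ — so the three models exhaust exactly the classes above, with the last two occupying the spacelike and timelike subcases. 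Because a totally geodesic submanifold is determined up to ambient congruence by a point together with the now-constant data $(T_pM,A_p,B_p)$ (all higher-order data being fixed by the algebraic Gauss–Codazzi system), matching data yields congruence with an open part of the corresponding model; transitivity of the listed groups $H\subset\isoo(\Sl\times\Sl)$ on the orbit of $(\id_2,\id_2)$ gives extrinsic homogeneity and the converse. I expect the main obstacles to be, first, carrying out the Codazzi and Gauss eliminations for types III and IV with their explicit non-diagonal matrices, and second, the causal-character bookkeeping in the $\{1,-1,-1\}$ case needed to prove that the two signature subcases are genuinely non-congruent and correspond bijectively to $g^+_{\kappa,\tau}$ and $g^-_{\kappa,\tau}$.
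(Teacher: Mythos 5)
The paper never proves Theorem \ref{totgeo}: it is imported from \cite{anarella} (``in the same paper all the totally geodesic Lagrangian submanifolds are classified\dots''), so there is no in-paper proof to compare against, and your argument has to stand on its own. Judged on its own, the route is sound and the load-bearing computations check out. With $h=0$, applying $-J$ to \eqref{Codazzi} does give your tangent-valued identity; evaluating it at $X=E_1$, $Y=Z=E_2$ with the explicit matrices of Lemma \ref{propAB} yields $\tfrac{2}{\sin 2\theta_1}E_1$ for type II, a nonzero combination of $E_1$ and $E_3$ for type III, and an $E_2$-component equal to $\pm\sinh 2\psi$ for type IV (using $2\theta_1+\theta_2\equiv 0$ and $\psi\neq 0$), so all three non-diagonal types are eliminated. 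In type I the same evaluation gives $\delta_j\sin 2(\theta_i-\theta_j)\,E_i=0$, exactly as you claim; the shift $\theta_i\mapsto\theta_i-\tau/2$ under $\Psi_{0,\tau}$ follows from Lemma \ref{isometrieswithAB}; the parity count from $\theta_1+\theta_2+\theta_3\equiv 0$ leaves only the eigenvalue patterns $\{1,1,1\}$ and $\{1,-1,-1\}$; and the three models do realize the three normalized data, since $P$ acts on their tangent spaces as $\id$, as $\alpha\mapsto\ii\alpha\ii$ (i.e.\ $\operatorname{diag}(1,-1,-1)$ with spacelike $+1$-direction), and as $\alpha\mapsto-\kk\alpha\kk$ (i.e.\ $\operatorname{diag}(-1,-1,1)$ with timelike $+1$-direction). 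One small point: constancy of the angles does not follow from $\sin 2(\theta_i-\theta_j)=0$ alone (that only makes the differences locally constant); it follows either from the angle-sum constraint, which confines $3\phi$ to a discrete set, or in one line from \eqref{anglederi} with $h=0$.

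The one step you must genuinely add is the closing congruence. As stated, ``a totally geodesic submanifold is determined up to ambient congruence by a point together with $(T_pM,A_p,B_p)$'' is not a theorem: what is true is that a connected totally geodesic submanifold is locally determined by a point and its tangent space, so after normalizing the algebraic data you still need to produce an ambient isometry carrying $(p,T_pM)$ onto the model's point and tangent space. This is where Lemma \ref{lemmasl2requaltoso21} enters. Translate $p$ to $(\id_2,\id_2)$; there the $\pm1$-eigenspaces of $P$ are $\{(\alpha,\alpha)\}$ and $\{(\alpha,-\alpha)\}$, $\alpha\in\slf$. If $A=\id$ then $T_pM=\{(\alpha,\alpha)\}$ with no freedom, and congruence with $u\mapsto(u,u)$ is immediate. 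In the $\{1,-1,-1\}$ case, the Lagrangian condition forces $T_pM=\R(\alpha_1,\alpha_1)\oplus\{(\beta,-\beta):\li\beta,\alpha_1\ri=0\}$ for a non-null $\alpha_1\in\slf$, and the isotropy, acting as $\SO(2,1)$ by conjugation (Lemma \ref{lemmasl2requaltoso21}), is transitive on spacelike and on timelike unit vectors, moving $\alpha_1$ to $\ii$ or to $\kk$ according to its causal character; the open-closed argument for totally geodesic submanifolds then gives that $f(M)$ lies in the corresponding model. This also settles, for free, the non-congruence bookkeeping you flag: isometries preserve causal character, so the spacelike and timelike subcases cannot be matched. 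With these insertions your proposal is a complete and correct proof, assembled entirely from the paper's own toolkit (Lemma \ref{propAB}, \eqref{Codazzi}, Lemma \ref{isometrieswithAB}, Lemma \ref{lemmasl2requaltoso21}), which is arguably a useful addition given that the paper itself only cites \cite{anarella} for this statement.
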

This theorem implies that any Lagrangian submanifold of type I with two equal angle functions modulo $\pi$ is extrinsically homogeneous. Therefore, to complete the classification of extrinsically homogeneous Lagrangian submanifolds we assume that the submanifold is not totally geodesic and that all angle functions are different modulo $\pi$.

\begin{proposition}\label{typeIcurvature} 
    Let $M$ be a non-totally geodesic extrinsically homogeneous Lagrangian submanifold of the pseudo-nearly Kähler $\Sl\times\Sl$ of type I. 
    Let $\theta_i$, $i=1,2,3$ be the angle functions associated to the $\Delta_1$-orthonormal frame with respect ot which $A$ and $B$ are diagonal. 
    Then $(\theta_1,\theta_2,\theta_3)$ is a permutation of $(0,\pi/3,2\pi/3)$ and the manifold $M$ has constant sectional curvature. 
    Moreover, the sectional curvature is either equal to $0$ or to $-\tfrac{3}{8}$.
\end{proposition}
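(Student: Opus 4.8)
The plan is to use extrinsic homogeneity to freeze all the local data and turn the Gauss and Codazzi equations into a finite trigonometric system in the angle differences and one component of the second fundamental form.

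I would start from the consequences of Proposition~\ref{frameunique}: the frame is unique and $\theta_i,h_{ij}^k,\omega_{ij}^k$ are constant. Since $E_i(\theta_j)=0$, Equation~\eqref{anglederi} gives $h_{jj}^k=0$, so by the symmetries of $h$ the whole second fundamental form is encoded in the single constant $c:=h_{12}^3$ (with $h_{13}^2=c$, $h_{23}^1=-c$ and their symmetric images), and $c\neq0$ because $M$ is not totally geodesic. As all angle differences are nonzero modulo $\pi$, Equation~\eqref{sffc} forces $\omega_{ij}^k=0$ whenever two indices coincide and determines the six remaining $\omega_{ij}^k$ (all indices distinct) as $\tfrac1{\sqrt6}\delta_k\varepsilon_{ij}^k$ plus a multiple of $c$ times a cotangent of an angle difference.

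Next I would evaluate the Codazzi equation~\eqref{Codazzi} at $(X,Y,Z)=(E_a,E_b,E_b)$ and take the $JE_a$-component, using the Lagrangian identity $\nabla^\perp_X(JY)=J\nabla_XY+G(X,Y)$ and the table~\eqref{tabla} for $G$. Writing $u=\theta_1-\theta_2$ and $v=\theta_2-\theta_3$ (so $\theta_1-\theta_3=u+v$) and inserting the $\omega_{ij}^k$, each of the three coordinate planes produces one scalar identity; for instance the $(E_1,E_2)$-plane yields
\[
\tfrac{2c}{\sqrt6}-2c^2\cot v+2c^2\cot(u+v)=-\tfrac23\sin2u .
\]
Adding the three identities in pairs cancels the terms linear in $c$ and leaves
\[
2c^2\big(\cot u+\cot(u+v)\big)=-\tfrac23\big(\sin2u+\sin2(u+v)\big),\qquad
2c^2\big(\cot v+\cot(u+v)\big)=-\tfrac23\big(\sin2v+\sin2(u+v)\big).
\]

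The elimination of $c$ is the heart of the matter. Equating the two values of $2c^2$ and simplifying (a triple-angle identity appears) should collapse everything to the single condition $\cos(u-v)=\cos 3(u+v)$. If $u\not\equiv v\pmod\pi$ this gives $u\equiv-2v$ or $v\equiv-2u\pmod\pi$, and substituting back into one of the paired equations yields $2c^2=-\tfrac23\sin^2 2v<0$, a contradiction, so this branch is empty. If $u\equiv v\pmod\pi$, then $\theta_1+\theta_2+\theta_3\equiv0$ forces $3\theta_2\equiv0$, while the Codazzi identities force $\cos 2u=-\tfrac12$, hence $u\equiv\pm\tfrac\pi3$; therefore $(\theta_1,\theta_2,\theta_3)$ is a permutation of $(0,\tfrac\pi3,\tfrac{2\pi}3)$. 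For these angles $\cos2(\theta_i-\theta_j)=-\tfrac12$ for all $i\neq j$, so in the Gauss equation~\eqref{Gauss} both the $(A,B)$-terms and the $h$-terms take the constant-curvature shape, giving $R(X,Y)Z=(-\tfrac12+c^2)(g(Y,Z)X-g(X,Z)Y)$; hence $M$ has constant curvature $K=-\tfrac12+c^2$. Finally one of the original Codazzi identities becomes a quadratic in $c$ whose solutions are $c^2=\tfrac12$ and $c^2=\tfrac18$, i.e.\ $K=0$ and $K=-\tfrac38$. I expect the main obstacle to be this middle step: keeping the $\Delta_1$-signature signs and the $G$-table straight while reducing the three Codazzi identities, separating their linear and quadratic parts in $c$, and recognizing the collapse to $\cos(u-v)=\cos3(u+v)$; the degenerate configurations in which a cotangent or a denominator vanishes must be inspected separately but lead to the same conclusion.
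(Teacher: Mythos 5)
Your computations are right where I could check them: the $(E_1,E_2)$-Codazzi identity is exactly what one gets from \eqref{Codazzi} with $h_{13}^2=c$, $h_{23}^1=-c$ and $\omega_{ij}^k=\tfrac{1}{\sqrt6}\delta_k\varepsilon_{ij}^k-h_{ij}^k\cot(\theta_j-\theta_k)$ from \eqref{sffc}; your two paired equations are the correct sums; at the angles $(0,\pi/3,2\pi/3)$ the Gauss equation \eqref{Gauss} does reduce to $R(X,Y)Z=(-\tfrac12+c^2)\big(g(Y,Z)X-g(X,Z)Y\big)$; and the remaining Codazzi identity becomes $4c^2+\sqrt2\,c-1=0$, whose roots $c=\tfrac{1}{2\sqrt2}$ and $c=-\tfrac{1}{\sqrt2}$ give $K=-\tfrac38$ and $K=0$, matching Propositions \ref{psl} and \ref{tori}. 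Your route is, however, genuinely different from the paper's: the paper never re-derives the angle values — it imports from \cite{constantangles} the statement that constant angles force a permutation of $(0,\pi/3,2\pi/3)$ (``the same argument works''), normalizes the ordering by Lemma \ref{isometrieswithAB}, and then computes a single Codazzi equation. Your elimination supplies, with the signs $\delta_i$ of the $\Delta_1$-frame kept explicit, precisely the argument the paper outsources; that is worth having, since transferring a Riemannian computation to a frame with a timelike direction is exactly the kind of step one should not take on faith.

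Two statements in your middle step are imprecise, although your case split absorbs both. First, equating the two values of $2c^2$ does not collapse to the single condition $\cos(u-v)=\cos3(u+v)$: clearing denominators in the paired equations gives
\[
\sin(2u+v)\,\sin(u+2v)\,\sin(u-v)=0,
\]
and since $\cos(u-v)-\cos3(u+v)=2\sin(2u+v)\sin(u+2v)$, your cosine condition is exactly the vanishing of the denominators, while the generic outcome of the elimination is the third factor, $u\equiv v\pmod\pi$ (equivalently $3\theta_2\equiv0$). No case is lost only because you split on $u\equiv v$ anyway and, in that branch, you correctly return to the Codazzi identities (the paired equation at $u=v$ forces $\sin3u=0$, hence $\cos2u=-\tfrac12$) rather than to the cosine condition. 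Second, in the branch $v\equiv-2u$ the substitution yields $2c^2=-\tfrac23\sin^22u$ only after dividing by $\sin 3u$; when $\sin3u=0$ both paired equations read $0=0$ and there is no contradiction. That sub-case is not fine print: it is $3\theta_1\equiv3\theta_2\equiv3\theta_3\equiv0$, i.e.\ exactly where the solutions live, and at those angles the paired equations are vacuous, so all information about $c$ sits in the unpaired identities carrying the linear term $\tfrac{2c}{\sqrt6}$. The ``degenerate configurations'' you defer are therefore the whole content of the existence part; once they are inspected as you indicate, the argument is complete and the conclusions agree with the paper's.
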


\begin{proof}
    Lagrangian submanifolds of type I are essentially an analogue of Lagrangian submanifolds of $\Ss^3\times\Ss^3$.
    In \cite{constantangles} the authors proved  for $\Ss^3\times\Ss^3$ that the angle functions of non-totally geodesic Lagrangian submanifolds are constant  and a permutation of $(0,\tfrac{\pi}{3},\tfrac{2\pi}{3})$. The same argument works for $\Sl\times\Sl$. 

    By Lemma~\ref{isometrieswithAB} we may assume that $(\theta_1,\theta_2,\theta_3)=(0,\pi/3,2\pi/3)$.  
    From Lemma \ref{lemmacase1} we know that all the functions $h_{ij}^k$  are equal to zero, except for $h_{12}^3$, which from Proposition \ref{frameunique} we know is constant.
    Then the Codazzi equation~\eqref{Codazzi} with $X=E_1$, $Y=E_2$, $Z=E_2$  yields that $h_{12}^3$ is either equal to $\frac{1}{2\sqrt{2}}$ or to $-\frac{1}{\sqrt{2}}$.
    Both cases imply that the sectional curvature is constant. 
    In the former case the sectional curvature is equal to $-\tfrac{3}{8}$ and in the latter case the sectional curvature is equal to~$0$.
       \end{proof}

\begin{example}\label{pslexample}
Let $f:\Sl\to\Sl\times\Sl$ be the isometric immersion given by $u\mapsto(\ii u\ii u^{-1},\jj u \jj u^{-1})$ and let $\{X_1,X_2,X_3\}$ be the frame on $\Sl$ given in \eqref{frameXi}. We may compute
\begin{equation*}
\begin{split}
    f_*(X_1)&
    =(0,\jj u\jj u^{-1}(-2u\ii u^{-1})),\\
    f_*(X_2)&
    =(\ii u\ii u^{-1}(-2u\jj u^{-1}),0),\\
    f_*(X_3)&=(\ii u \ii u^{-1}(-2u\kk u^{-1}),\jj u\jj u^{-1}(-2u\kk u^{-1})).\\
\end{split}
\end{equation*}
It follows from the definition of $J$ in \eqref{defJ} that
\begin{equation*}
    \begin{split}
        Jf_*(X_1)&=\frac{1}{\sqrt{3}}(\ii u \ii u^{-1}(4u\ii u^{-1}),\jj u \jj u^{-1}(2u\ii u^{-1})),\\
        Jf_*(X_2)&=\frac{1}{\sqrt{3}}(\ii u \ii u^{-1}(-2u\jj u^{-1}),\jj u \jj u^{-1}(-4u\jj u^{-1})),\\
        Jf_*(X_3)&=\frac{1}{\sqrt{3}}(\ii u \ii u^{-1}(2u\kk u^{-1}),\jj u \jj u^{-1}(-2u\kk u^{-1})).\\
    \end{split}
\end{equation*}
We can easily check that $f$ is a Lagrangian immersion by computing $g(Jf_*(X_i),f_*(X_j))=0$ for $i,j=1,2,3$. Moreover, we have 
\begin{equation*}
    \begin{split}
        Pf_*(X_1)&=(\ii u\ii u^{-1}(-2u\ii u^{-1}),0)=-\frac{1}{2}f_*(X_1)-\frac{\sqrt{3}}{2}Jf_*(X_1), \\
        Pf_*(X_2)&=(0,\jj u\jj u^{-1}(-2u\jj u^{-1}))=-\frac{1}{2}f_*(X_2)+\frac{\sqrt{3}}{2}Jf_*(X_2), \\
        Pf_*(X_3)&=(\ii u \ii u^{-1}(-2u\kk u^{-1}),\jj u\jj u^{-1}(-2u\kk u^{-1}))=f_*(X_3). \\
    \end{split}
\end{equation*}
Let $H$ be the subgroup of $\isoo(\Sl\times\Sl)$ given by $\{(\ii u \ii , \jj u \jj, u):u\in\Sl\}\cong\Sl$. Then $f(\Sl)=H\cdot (\id_2,\id_2)$. Notice that $H$ acts on $f(\Sl)$ with isotropy~$\Z_2$. Hence, the embedding $\Psl\to\Sl\times\Sl:[u]\mapsto(\ii u\ii u^{-1},\jj u \jj u^{-1})$ is congruent to $f$.
\end{example}

\begin{proposition}\label{psl}
Any extrinsically homogeneous non-totally geodesic Lagrangian submanifold of the pseudo-nearly Kähler $\Sl\times\Sl$ of type I with $h_{12}^3=\frac{1}{2\sqrt{2}}$ is congruent to an open subset of the image of $\mathrm{PSL}(2,\R)\to\Sl\times\Sl:[u]\mapsto(\ii u\ii u^{-1},\jj u\jj u^{-1})$.
\end{proposition}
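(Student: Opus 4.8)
The plan is to reduce the hypotheses to a complete set of constant structure functions, to recognise these as the invariants of the model in Example~\ref{pslexample}, and then to invoke a moving-frame rigidity argument to deduce congruence.

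First I would normalise the data. By Proposition~\ref{typeIcurvature} the angle functions are constant and a permutation of $(0,\tfrac{\pi}{3},\tfrac{2\pi}{3})$, so after composing with a suitable isometry from Lemma~\ref{isometrieswithAB} I may assume $(\theta_1,\theta_2,\theta_3)=(0,\tfrac{\pi}{3},\tfrac{2\pi}{3})$. By Proposition~\ref{frameunique} all of $\theta_i$, $h_{ij}^k$ and $\omega_{ij}^k$ are constant, and by Lemma~\ref{lemmacase1} every component of the second fundamental form vanishes except the $h_{12}^3$-family, fixed to $\tfrac{1}{2\sqrt2}$ by hypothesis. The differences $\theta_j-\theta_k$ are never $0$ modulo $\pi$, so $\sin(\theta_j-\theta_k)\neq0$ and equation~\eqref{sffc} can be solved for each $\omega_{ij}^k$ with $j\neq k$; together with the metric-compatibility relations ($\omega_{ij}^j=0$ and $\delta_k\omega_{ij}^k=-\delta_j\omega_{ik}^j$) this determines every connection coefficient as an explicit constant, with any residual ambiguity removed by the Gauss equation~\eqref{Gauss} forcing $K=-\tfrac38$ and by Codazzi~\eqref{Codazzi}.

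Next I would set up the rigidity statement: a Lagrangian submanifold whose adapted $\Delta_1$-frame $\{E_i\}$ has these constant $\theta_i$, $h_{ij}^k$ and $\omega_{ij}^k$ is unique up to an element of $\isoo(\Sl\times\Sl)$. Indeed, the Gauss formula~\eqref{gaussformula}, the Weingarten formula, the relation $S_{JX}Y=Jh(X,Y)$ from~\eqref{lagrprop}, and $\tilde\nabla J=G$ with $G$ read off from~\eqref{tabla}, together express $\tilde\nabla_{E_i}E_j$ and $\tilde\nabla_{E_i}(JE_j)$ purely in terms of the constants. Thus the adapted frame $(E_1,E_2,E_3,JE_1,JE_2,JE_3)$ obeys a first-order system with constant coefficients along $M$; the system is integrable because $M$ exists, and its solution is determined by the frame at a single point. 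Since two admissible initial frames are both $g$-orthonormal and adapted to $J$ and $P$, they differ by an isometry in $\isoo(\Sl\times\Sl)$, exactly as in the proof of Theorem~\ref{groupofisometries1} via Lemma~\ref{lemmasl2requaltoso21}. Hence any two submanifolds carrying this data are locally congruent.

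Finally I would confirm that the model realises the same constants. The computation of $Pf_*(X_i)$ in Example~\ref{pslexample} shows that $[u]\mapsto(\ii u\ii u^{-1},\jj u\jj u^{-1})$ is of type~I with angle functions a permutation of $(0,\tfrac{\pi}{3},\tfrac{2\pi}{3})$; being non-totally geodesic, Proposition~\ref{typeIcurvature} leaves only $h_{12}^3\in\{\tfrac{1}{2\sqrt2},-\tfrac{1}{\sqrt2}\}$, and a direct curvature computation giving $K=-\tfrac38$ selects $h_{12}^3=\tfrac{1}{2\sqrt2}$. The rigidity step then identifies the given submanifold with an open subset of this image. The main obstacle is precisely this rigidity step: one must check that the constant frame-evolution system is consistent and, crucially, that any two admissible initial adapted frames are related by an ambient isometry, which is where the transitivity of $\isoo(\Sl\times\Sl)$ on $g$-orthonormal $J$- and $P$-adapted frames (Lemma~\ref{lemmasl2requaltoso21}) does the work.
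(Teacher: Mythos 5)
Your proposal reaches the right conclusion and is sound in outline, but it is organised quite differently from the paper's proof, which proceeds by explicit integration: there, the constant curvature $K=-\tfrac{3}{8}$ is used to identify $M$ locally with $(\Sl,\tfrac{8}{3}\li,\ri)$, the immersion is written as $f_*(E_i)=(p\alpha_i,q\beta_i)$ with the $Q$-relations \eqref{prodQ} eliminating half of the $\slf$-valued unknowns, the linear system \eqref{systemal1al2be3} is derived for $\alpha_1,\alpha_2,\beta_3$, the initial data are normalised via Lemma \ref{lemmasl2requaltoso21}, and $u\mapsto(\ii u\ii u^{-1},\jj u\jj u^{-1})$ is exhibited as the unique solution. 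You instead prove a rigidity principle (same constant invariants implies local congruence) and then match invariants with Example \ref{pslexample}; this buys you a shorter argument that never solves an equation, reuses the example for existence, and would serve as a template for types II and III as well, while the paper's route has the advantage of reconstructing the immersion without presupposing the example. Two points in your rigidity step need tightening, though neither is fatal. First, the frame-matching isometry need not lie in $\isoo(\Sl\times\Sl)$: Lemma \ref{lemmasl2requaltoso21} only produces $c\in\mathrm{SL}^{\pm}(2,\R)$, and the adapted frames at a fixed point form two orbits under the isotropy of $\isoo$, swapped precisely by conjugation with $\det c=-1$, i.e.\ by an element of the $\Z_2$-extension of $\isoo$; since congruence refers to the full isometry group (and the paper itself applies the isometry $(p,q)\mapsto(cpc^{-1},cqc^{-1})$ with such a $c$), the conclusion survives, but ``transitivity of $\isoo$ on adapted frames'' is literally false as stated. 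Second, ``determined by the frame at a single point'' requires closing your covariant system into genuine ODEs: you must convert the $\tilde\nabla$-derivatives of the position and of $(E_1,E_2,E_3,JE_1,JE_2,JE_3)$ into flat derivatives in $\R^8_4$ via \eqref{relprodkal} and \eqref{connectionr8}, so that along every curve emanating from the matched point one obtains a linear system to which uniqueness of solutions applies, giving locally that the isometric image of $M$ is contained in the model. These conversions are exactly the computations the paper performs explicitly, so this omission is presentational rather than substantive.
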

\begin{proof}
  Let $f\colon M\to\Sl\times\Sl$ be a non-totally geodesic extrinsically homogeneous Lagrangian immersion of type I.
  Let $\{E_1,E_2,E_3\}$ be the frame on $M$  such that $JG(E_1,E_2)=\sqrt{\tfrac{2}{3}}E_3$ with angle functions given by $(\theta_1,\theta_2,\theta_3)=(0,\pi/3,2\pi/3)$. Moreover, assume that $h_{12}^3=g(h(E_1,E_2),E_3)=\frac{1}{2\sqrt{2}}$. We have
\[
PE_1=E_1,\ \ \  PE_2=-\tfrac{1}{2}E_2+\tfrac{\sqrt{3}}{2}JE_2,\ \ PE_3=-\tfrac{1}{2}E_3-\tfrac{\sqrt{3}}{2}JE_3.
\]
Hence, according to Equation \eqref{prodQ}, we have
\begin{equation}
    QE_1=\sqrt{3}JE_1,\ \ \ QE_2=-E_2, \ \ \ QE_3=E_3.\label{eqQ}
\end{equation}
From Proposition \ref{typeIcurvature} it follows that $M$ has constant sectional curvature $-\tfrac{3}{8}$. 
Thus $M$ is locally isometric to $(\Sl,\tfrac{8}{3} \li,\ri)$ (see \cite{oneill}), where $\li,\ri$ is the metric given in \eqref{prodsl2}. Then we may identify 
\[
E_1=\sqrt{\tfrac{3}{8}}X_3,\ \ \ E_2=\sqrt{\tfrac{3}{8}}X_2, \ \ \ E_3=\sqrt{\tfrac{3}{8}}X_1.
\]
where $\{X_1,X_2,X_3\}$ is the frame on $\Sl$ given in \eqref{frameXi}.
Now we write the immersion $f(u)=(p(u),q(u))$ and $f_*(E_i)_u=(D_{E_i}f)_u=(p(u)\alpha_i(u),q(u)\beta_i(u))$ where $\alpha_i(u),\beta_i(u)\in\mathfrak{sl}(2,\R)$. 
By Equation~\eqref{eqQ} we have $\alpha_1=\beta_1$, $\beta_2=0$ and $\alpha_3=0$.
We know from \eqref{relprodkal} that $\nabla^E_{E_1}E_1=\nabla^E_{E_2}E_2=\nabla^E_{E_3}E_3=0$ and
\begin{equation}
    \begin{split}
        \nabla^E_{E_1}E_2&=\nabla^E_{E_1}E_3=0,\\
        \nabla^E_{E_2}E_1&=-\sqrt{\tfrac{3}{2}}E_3=-\sqrt{\tfrac{3}{2}}(0,q\beta_3),\\
        \nabla^E_{E_2}E_3&=-\tfrac{1}{2}\sqrt{\tfrac{3}{2}}(E_1+QE_1)=-\sqrt{\tfrac{3}{2}}(0,q\alpha_1),\\
        \nabla^E_{E_3}E_1&=\sqrt{\tfrac{3}{2}}E_2=\sqrt{\tfrac{3}{2}}(p\alpha_2,0),\\
        \nabla^E_{E_3}E_2&=\tfrac{1}{2}\sqrt{\tfrac{3}{2}}(E_1-QE_1)=\sqrt{\tfrac{3}{2}}(p\alpha_1,0).\\
    \end{split}\label{eqconprod}
\end{equation}
Throughout this proof, we will denote by $\li,\ri_\times$ the product metric associated to the metric $\li,\ri$ on $\Sl$ given in \eqref{prodsl2}. 
By Equation \ref{prodmetric} $E_1,E_2,E_3$ are orthogonal with respect to the product metric $\li,\ri_\times$ and their lengths are
\[
\li E_1,E_1\ri_\times=-3, \ \ \ \li E_2,E_2\ri_\times=\li E_3,E_3\ri_\times=\tfrac{3}{2}.
\]
This implies that
\begin{equation}
    \li \alpha_1,\alpha_1 \ri =-\tfrac{3}{2}, \ \ \ \li \alpha_2,\alpha_2\ri=\li \beta_3,\beta_3\ri=\tfrac{3}{2}    \label{lenghtalphaspsl}
\end{equation}
On the one hand, Equation \eqref{connectionr8} yields
\[
D_{E_i}D_{E_j}f=\nabla^E_{E_i}E_j+\tfrac{1}{2}\li E_i,E_j\ri(p,q)+\tfrac{1}{2}\li E_i,QE_j\ri (-p,q),
\]
and on the other hand, by Equation \eqref{prodinslf}, we furnsih
\begin{equation*}
    \begin{split}
        D_{E_i}D_{E_j}f&=(p\alpha_i\alpha_j+pE_i(\alpha_j),q\beta_i\beta_j+qE_i(\beta_j))\\
        &=(p(\alpha_i\times\alpha_j)+\li \alpha_i,\alpha_j\ri p+pE_i(\alpha_j),q(\beta_i\times\beta_j)+\li \beta_i,\beta_j\ri q+qE_i(\beta_j)).
    \end{split}
\end{equation*}
Therefore
\[
\nabla^E_{E_i}E_j=(p\alpha_i\times\alpha_j+pE_i(\alpha_j),q\beta_i\times\beta_j+qE_i(\beta_j)),
\]
where $E_i(\alpha)=d\alpha(E_i)$ thinking of $\alpha$ as a map from $\Sl$ into $\slf$.
Hence, using (\ref{eqconprod}) we obtain
\[
\alpha_1\times\alpha_2=-\sqrt{\tfrac{3}{2}}\beta_3
\]
and also
\begin{equation*}
    \begin{aligned}
        E_1(\alpha_1)&=0,  &&E_2(\alpha_1)=-\sqrt{\tfrac{3}{2}}\beta_3,  &&E_3(\alpha_1)=\sqrt{\tfrac{3}{2}}\alpha_2,\\
        E_1(\alpha_2)&= \sqrt{\tfrac{3}{2}}\beta_3,  &&E_2(\alpha_2)=0,  &&E_3(\alpha_2)=\sqrt{\tfrac{3}{2}}\alpha_1,\\
        E_1(\beta_3)&=-\sqrt{\tfrac{3}{2}}\alpha_2,  &&E_2(\beta_3) =-\sqrt{\tfrac{3}{2}}\alpha_1, &&E_3(\beta_3)=0.\\ 
    \end{aligned}
\end{equation*}
In terms of the vector fields $X_i$ this translates into the  following differential equations:
\begin{equation}
    \begin{aligned}
        X_1(\alpha_1)&=2\alpha_2, \ \ \ &&X_2(\alpha_1)=-2\beta_3, \ \ \ &&X_3(\alpha_1)=0,\\
        X_1(\alpha_2)&= 2\alpha_1, \ \ \ &&X_2(\alpha_2)=0, \ \ \ &&X_3(\alpha_2)=2\beta_3,\\
        X_1(\beta_3)&=0, \ \ \ &&X_2(\beta_3) =-2\alpha_1, \ \ \ &&X_3(\beta_3)=-2\alpha_2.\\ 
    \end{aligned}\label{systemal1al2be3}
\end{equation}
From~\eqref{lenghtalphaspsl} and Lemma \ref{lemmasl2requaltoso21} we know that there exists $c\in\mathrm{SL}^\pm(2,\R)$ such that 
\begin{equation}
    \alpha_1(\id_2)=-\sqrt{\tfrac{3}{2}}c\kk c^{-1}, \ \ \ \alpha_2(\id_2)=-\sqrt{\tfrac{3}{2}}c\jj c^{-1}, \ \ \ \beta_3(\id_2)=-\sqrt{\tfrac{3}{2}}c\ii c^{-1}. \label{initialconditionspsl}
\end{equation}
Therefore, as the solution of the system (\ref{systemal1al2be3}) with initial conditions \eqref{initialconditionspsl} is unique, we have that
\[
\alpha_1(u)=-\sqrt{\tfrac{3}{2}}cu\kk u^{-1}c^{-1}, \ \ \ \alpha_2(u)=-\sqrt{\tfrac{3}{2}}u\jj u^{-1}c^{-1}, \ \ \ \beta_3(u)=-\sqrt{\tfrac{3}{2}}cu\ii u^{-1}c^{-1}.
\]
We can check easily that they satisfy the equations in (\ref{systemal1al2be3}). By the homogeneity of $\Sl\times\Sl$ we can take initial conditions $f(\id_2)=(\id_2,\id_2)$. Applying the isometry of $\Sl\times\Sl$ given by $(p,q)\mapsto(cpc^{-1},cqc^{-1})$ we may assume that
\[
\alpha_1(u)=-\sqrt{\tfrac{3}{2}}u\kk u^{-1}, \ \ \ \alpha_2(u)=-\sqrt{\tfrac{3}{2}}u\jj u^{-1}, \ \ \ \beta_3(u)=-\sqrt{\tfrac{3}{2}}cu\ii u^{-1}.
\]
Then, the immersion $f=(p,q)$ given by $p=\ii u\ii u^{-1}$, $q=\jj u\jj u^{-1}$ is the unique solution of the differential equation $D_{E_i}p=p\alpha_i$, $D_{E_i}q=q\beta_i$, with $i=1,2,3$.
\end{proof}
\begin{example}\label{toriexample}

Let $H$ be the Lie subgroup of $\Sl\times\Sl\times\Sl$ given by
\[H=\{(e^{v\ii},e^{w\jj},e^{u\kk}):v,w\in \R,\ u\in [0,2\pi)\}\cong\R^2\times \Ss^1.\]
The subgroup $H$ acts transitively on the submanifold  $f:\R^3_1/\mathbb{Z}\to\Sl\times\Sl$ given by 
    \[
    f(u,v,w)=(e^{v\ii}e^{-u\kk},e^{w \jj}e^{-u\kk}).
    \]
    Moreover, the isotropy of $H$ is trivial.

    The derivatives of $f$ are given by
    \begin{equation*}
        \begin{split}
            f_u=(-e^{v\ii} e^{-u \kk}\kk,-e^{w\jj} e^{-u\kk}\kk),\ \ \ \ \ \ f_v=(e^{v\ii} e^{-u \kk} e^{u \kk}\ii  e^{-u \kk},0),\ \ \ \ \ \ f_w=(0,e^{w\jj} e^{-u \kk}e^{u \kk}\jj e^{-u \kk}).
        \end{split}
    \end{equation*}
    Applying the almost complex structure yields the following expressions:
    \begin{equation*}
        \begin{split}
            Jf_u&=\frac{1}{\sqrt{3}}(e^{v\ii} e^{-u \kk}\kk,-e^{w\jj} e^{-u\kk}\kk),\\
            Jf_v&=\frac{1}{\sqrt{3}}( e^{v\ii} e^{-u \kk}  e^{u \kk}\ii e^{-u \kk},2 e^{v\ii} e^{-u \kk}  e^{u \kk}\ii e^{-u \kk}),\\
            Jf_w&=\frac{1}{\sqrt{3}}(-2e^{w\jj} e^{-u \kk}  e^{u \kk}\jj e^{-u \kk}, -e^{w\jj} e^{-u \kk}  e^{u \kk}\jj e^{-u \kk}).\\
        \end{split}
    \end{equation*}
    We can easily check that 
    \[g(Jf_u,f_v)=g(Jf_u,f_w)=g(Jf_v,f_w)=0,\]
    which shows that this submanifold is Lagrangian.
    After applying the tensor $P$ we obtain
    \begin{equation*}
        \begin{split}
            Pf_u&=(-e^{v\ii} e^{-u \kk}\kk,-e^{w\jj} e^{-u\kk}\kk)=f_u,\\
            Pf_v&=(0,e^{w\jj} e^{-u \kk} e^{u \kk}\ii  e^{-u \kk})=-\frac{1}{2}f_v+\frac{\sqrt{3}}{2}Jf_v,\\
            Pf_w&=(e^{w\jj} e^{-u \kk}e^{u \kk}\jj e^{-u \kk},0)=-\frac{1}{2}f_w-\frac{\sqrt{3}}{2}Jf_w.\\
        \end{split}
    \end{equation*}
    Thus, $f$ is a flat, extrinsically homogeneous Lagrangian submanifold of $\Sl\times\Sl$ of type~I with constant angles $(\theta_1,\theta_2,\theta_3)=(0,\tfrac{\pi}{3},\tfrac{2\pi}{3})$.
\end{example}
\begin{proposition}\label{tori}
Any extrinsically homogeneous non-totally geodesic Lagrangian submanifold of type I of the nearly Kähler $\Sl\times\Sl$ with $h_{12}^3=-\frac{1}{\sqrt{2}}$ is congruent to an open subset of the image of the Lagrangian embedding $f:\R^3_1/\Z\to\Sl\times\Sl$ given by
\[
f(u,v,w)=\left(e^{v \ii}e^{-u \kk}, e^{w\jj }e^{-u \kk}\right).
\]
\end{proposition}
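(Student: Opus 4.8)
The plan is to follow the argument of Proposition~\ref{psl}, replacing the negatively curved geometry there by the flat geometry forced here. Let $f\colon M\to\Sl\times\Sl$ be a non-totally geodesic extrinsically homogeneous Lagrangian immersion of type~I, and let $\{E_1,E_2,E_3\}$ be the frame with $JG(E_1,E_2)=\sqrt{\tfrac{2}{3}}E_3$. By Proposition~\ref{typeIcurvature} together with Lemma~\ref{isometrieswithAB} I may normalize the angle functions to $(\theta_1,\theta_2,\theta_3)=(0,\pi/3,2\pi/3)$ and assume $h_{12}^3=-\tfrac{1}{\sqrt2}$. The action of $P$ on this frame is exactly as in Proposition~\ref{psl}, namely $PE_1=E_1$, $PE_2=-\tfrac12 E_2+\tfrac{\sqrt3}{2}JE_2$, $PE_3=-\tfrac12 E_3-\tfrac{\sqrt3}{2}JE_3$, so Equation~\eqref{prodQ} again yields $QE_1=\sqrt3\,JE_1$, $QE_2=-E_2$ and $QE_3=E_3$. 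Writing $f=(p,q)$ and $f_*(E_i)=(p\alpha_i,q\beta_i)$ with $\alpha_i,\beta_i\in\slf$, these relations force $\alpha_1=\beta_1$, $\beta_2=0$ and $\alpha_3=0$, just as before.

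Next I would pin down the intrinsic data. By Proposition~\ref{typeIcurvature} the sectional curvature is now $0$, so $M$ is locally isometric to the flat $\R^3_1$, and by Proposition~\ref{frameunique} all $h_{ij}^k$ and $\omega_{ij}^k$ are constant. Since the angles are constant, \eqref{anglederi} of Lemma~\ref{lemmacase1} kills every $h_{jj}^i$, leaving $h_{12}^3=-\tfrac{1}{\sqrt2}$ and its totally symmetric partners as the only nonzero components of $h$. Feeding this value into \eqref{sffc} one finds that \emph{all} connection coefficients $\omega_{ij}^k$ vanish; this is the concrete reflection of flatness and says that $\{E_i\}$ is, up to a constant rescaling, a coordinate frame for the abelian-by-circle group $\R^2\times\Ss^1$. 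Substituting $\omega_{ij}^k=0$ and the single surviving second fundamental form component into \eqref{relprodkal}, with the $JG$ terms evaluated through the table \eqref{tabla}, then yields closed-form expressions for every $\nabla^E_{E_i}E_j$.

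I would then compute $D_{E_i}D_{E_j}f$ in two ways, once from \eqref{connectionr8} and once from the matrix identity \eqref{prodinslf}, and equate tangential parts to obtain $\nabla^E_{E_i}E_j=(p\,\alpha_i\times\alpha_j+p\,E_i(\alpha_j),\,q\,\beta_i\times\beta_j+q\,E_i(\beta_j))$. Matching this with the $\nabla^E$ found above gives a first-order linear system for the three independent $\slf$-valued maps $\alpha_1(=\beta_1)$, $\alpha_2$ and $\beta_3$. Because the $\omega_{ij}^k$ vanish the frame fields commute and the system collapses to ordinary differential equations along a single frame direction: $\alpha_1$ is forced to be constant, while $\alpha_2$ and $\beta_3$ evolve by conjugation by a one-parameter group. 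Fixing the initial values at the base point through Lemma~\ref{lemmasl2requaltoso21}, using homogeneity to take $f=(\id_2,\id_2)$ there, and removing the auxiliary conjugation by an element of $\mathrm{SL}^{\pm}(2,\R)$, I would integrate the system; by uniqueness the solution must be $p=e^{v\ii}e^{-u\kk}$, $q=e^{w\jj}e^{-u\kk}$, which is precisely the immersion of Example~\ref{toriexample}, the $\Z$-quotient arising because the $\kk$-direction closes up into the circle factor.

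The main obstacle I anticipate is not the connection bookkeeping, which is tightly constrained by \eqref{anglederi}, \eqref{sffc} and \eqref{relprodkal} and is even simplified by the vanishing of every $\omega_{ij}^k$, but rather correctly setting up and integrating the resulting system. Unlike Proposition~\ref{psl}, whose solution had the adjoint form $u\mapsto -\sqrt{\tfrac{3}{2}}\,u\kk u^{-1}$ over the entire group $\Sl$, here the immersion is assembled from three commuting one-parameter subgroups $e^{v\ii}$, $e^{w\jj}$ and $e^{-u\kk}$, and some care is needed both to verify that the integrated maps genuinely solve the original equations and to identify the correct global domain as $\R^3_1/\Z$ rather than $\R^3_1$.
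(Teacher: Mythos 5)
Your proposal is correct and takes essentially the same route as the paper: the same normalization of the angles and frame, the same $Q$-relations forcing $\alpha_1=\beta_1$, $\beta_2=0$, $\alpha_3=0$, the same deduction from \eqref{anglederi} and \eqref{sffc} that the frame commutes (indeed all $\omega_{ij}^k$ vanish), and the same endgame of normalizing initial data by isometries and Lemma~\ref{lemmasl2requaltoso21} before invoking uniqueness of solutions. The only difference is cosmetic: you integrate the first-order system for the $\slf$-valued logarithmic derivatives $\alpha_1,\alpha_2,\beta_3$ in the style of Proposition~\ref{psl}, whereas the paper's own proof of this proposition derives and solves the second-order system \eqref{difeqsystemtori} for $p$ and $q$ directly; both reduce to the same rotation ODEs along the $u$-direction and produce the same immersion after a constant rescaling of coordinates.
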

\begin{proof}
Notice first that all the coefficients of the connection and second fundamental form vanish except for $h_{12}^3=g(h(E_1,E_2),JE_3)=-\frac{1}{\sqrt{2}}$. We also know that the angle functions are given by $(2\theta_1,2\theta_2,2\theta_3)=(0,\frac{2\pi}{3},\frac{4\pi}{3})$. So we can find a local frame such that $JG(E_1,E_3)=\sqrt{\frac{2}{3}}E_3$ and 
\[
PE_1=E_1, \ \ \ \ PE_2=-\tfrac{1}{2}E_2+\tfrac{\sqrt{3}}{2}JE_2, \ \ \ \ PE_3=-\tfrac{1}{2}E_3-\tfrac{\sqrt{3}}{2}JE_3.
\]
From the relation between $Q$ and $P$ in \eqref{prodQ} it follows that
\begin{equation}
QE_1=\sqrt{3}JE_1, \ \ \ \ QE_2=-E_2, \ \ \ \, QE_3=E_3.    \label{Qstori}
\end{equation}
Using that $h_{ii}^j=0$, $h_{12}^3=-\tfrac{1}{\sqrt{2}}$ and Equation \eqref{sffc}, we deduce that $[E_i,E_j]=0$ for $i,j=1,2,3$. Then we write $E_1=f_u$, $E_2=f_v$, $E_3=f_w$ for $u,v,w$ local coordinates. Thus, Equation \eqref{Qstori} implies that 
\begin{equation}
    p_w=0, \ \ \ \ \ q_v=0,  \ \ \ \ \ q_u=p_u. \label{relationsderivativestori}
\end{equation} 
Moreover, we have
$\nabla^E_{f_u}f_u=\nabla^E_{f_v}f_v=\nabla^E_{f_w}f_w=0$ and 
\begin{equation*}
    \begin{split}
        \nabla^E_{f_u}f_v&=\nabla^E_{f_v}f_u=-\tfrac{1}{2}\sqrt{\tfrac{3}{2}}f_w-\tfrac{3}{2\sqrt{2}}Jf_w,\\
        \nabla^E_{f_u}f_w&=\nabla^E_{f_w}f_u=\tfrac{1}{2}\sqrt{\tfrac{3}{2}}f_v-\tfrac{3}{2\sqrt{2}}Jf_v,\\
        \nabla^E_{f_v}f_w&=\nabla^E_{f_w}f_v=0.\\
    \end{split}
\end{equation*}
From the relation between the Euclidean metric with the nearly Kähler metric in \eqref{prodmetric} we know that $f_u,f_v,f_w$ are also orthogonal with respect to the induced Euclidean product metric. Furthermore, their inner products are given by
\begin{equation*}
\li f_u,f_u\ri=-3, \ \ \ \ \li f_v,f_v\ri=\li f_w,f_w\ri=\tfrac{3}{2}.    \label{metricsftori}
\end{equation*}
Furthermore,
\begin{equation*}
\begin{aligned}
    \li f_u,Qf_u\ri&=0, &&\li f_u, Qf_v\ri=0, &&\li f_u,Qf_w\ri=0,\\
    \li f_v,Qf_w\ri&=0,  &&\li f_v,Qf_v\ri=-\tfrac{3}{2},  &&\li f_w,Qf_w\ri=\tfrac{3}{2}.\label{metricsqtori}
\end{aligned}
\end{equation*}
Thus, using \eqref{relationsderivativestori} we get that $p_u$, $p_v$ and $q_w$ are orthogonal and
\begin{equation}
    \li p_u,p_u\ri=-\tfrac{3}{2}, \ \ \ \ \li p_v,p_v\ri=\li p_w,p_w\ri=\tfrac{3}{2}. \label{lenghtsputori}   
\end{equation}

From the expression for the Euclidean connection $D$ of $\R^8_4$ given in Equation \eqref{connectionr8} we obtain
\begin{equation*}
    \begin{aligned}
        f_{uu}&=-\tfrac{3}{2}f,  &&f_{uv}=-\tfrac{1}{2}\sqrt{\tfrac{3}{2}}f_w-\tfrac{3}{2\sqrt{2}}Jf_w,\\
        f_{uw}&=\tfrac{1}{2}\sqrt{\tfrac{3}{2}}f_v-\tfrac{3}{2\sqrt{2}}Jf_v, &&f_{vv}=\tfrac{3}{4}f-\tfrac{3}{4}Qf,\\
        f_{vw}&=0, &&f_{ww}=\tfrac{3}{4}f+\tfrac{3}{4}Qf.\\
    \end{aligned}
\end{equation*}
Hence we produce differential equations for $p$ and $q$:
\begin{equation}
    \begin{aligned}
        p_{uu}&=-\tfrac{3}{2}p, && p_{vv}=\tfrac{3}{2}p,  &&p_{uv}=\sqrt{\tfrac{3}{2}}pq^{-1}q_w, \\
        q_{uu}&=-\tfrac{3}{2}q, && q_{ww}=\tfrac{3}{2}q,  && q_{uw}=-\sqrt{\tfrac{3}{2}}qp^{-1}p_v. \\
    \end{aligned}\label{difeqsystemtori}
\end{equation}
By applying an isometry of the type $(p,q)\mapsto(ap,bq)$ we may assume that $p(0)=\id_2$, $q(0)=\id_2$. Now, because of Equation \eqref{lenghtsputori}, there exists a matrix $c\in\mathrm{SL}^\pm(2,\R)$ such that 
\[
p_u(0)=\sqrt{\tfrac{3}{2}}c\kk c^{-1}, \ \ \ p_v(0)=\sqrt{\tfrac{3}{2}}c\ii c^{-1}, \ \ \ q_u(0)=\sqrt{\tfrac{3}{2}}c\kk c^{-1}, \ \ \ q_w(0)=\sqrt{\tfrac{3}{2}}c\jj c^{-1}.
\]
Applying the isometry $(p,q)\mapsto (cpc^{-1},cqc^{-1})$ we obtain that any solution of the system \eqref{difeqsystemtori} is congruent to the map $f(u,v,w)=(e^{\sqrt{\tfrac{3}{2}}v\ii}e^{-\sqrt{\tfrac{3}{2}}u\kk},e^{\sqrt{\tfrac{3}{2}}w\jj}e^{-\sqrt{\tfrac{3}{2}}u\kk})$. Then, changing the coordinates by $u\to\sqrt{\tfrac{2}{3}}u $, $v\to\sqrt{\tfrac{2}{3}}v $ and $w\to\sqrt{\tfrac{2}{3}}w$, we get the map $(u,v,w)\mapsto (e^{v\ii}e^{-u\kk},e^{w\jj}e^{-u\kk})$.
\end{proof}

\subsection{Extrinsically homogeneous Lagrangian submanifolds of type II}
\begin{proposition}\label{propositiontype2twotypes}
    Let $M$ be an extrinsically homogeneous Lagrangian submanifold of the pseudo-nearly Kähler $\Sl\times\Sl$. Suppose that $A$ and $B$ take type II form in Lemma \ref{propAB} with respect to a $\Delta_2$-orthonormal frame $\{E_1,E_3,E_3\}$. 
    Then $\theta_1=\theta_2=\pi/3$ modulo $\pi$. 
    Also, all the components of the second fundamental form and of the connection are equal to zero except for $h_{22}^3$, $\omega_{12}^3$, $\omega_{21}^3$, $\omega_{31}^1$ and $\omega_{22}^3$. 
    % There are two possibilities, given by
    There are only two possibilities for the value of these constants: 
    \begin{enumerate}
        \item $h_{22}^3=-\frac{\sqrt{2}}{3},\ \ \ \omega_{12}^3=-\sqrt{\frac{3}{2}}, \ \ \ \omega_{21}^3=-\sqrt{\frac{3}{2}}, \ \ \ \omega_{31}^1=0, \ \ \  \omega_{33}^2=0,$ \label{typeIIfirst}
        \item $h_{22}^3=\frac{2 \sqrt{2}}{3},\ \ \ \omega_{12}^3=\sqrt{\frac{3}{2}}, \ \ \ \omega_{21}^3=\sqrt{\frac{3}{2}}, \ \ \ \omega_{31}^1=\sqrt{\frac{3}{2}}.$\label{typeIIsecond}
    \end{enumerate}
    Moreover, in both cases the sectional curvature is constant and equal to $-\tfrac{3}{2}$.
\end{proposition}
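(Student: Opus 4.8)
The plan is to turn extrinsic homogeneity into a rigidity statement: once every structure function is constant, the Codazzi and Gauss equations cease to be differential equations and become a finite polynomial system, which I then solve. The decisive first step is Proposition~\ref{frameuniquecase2}, which guarantees that for a type~II extrinsically homogeneous $M$ all of $\theta_1,\theta_2$, the second fundamental form coefficients $h_{ij}^k$ and the connection coefficients $\omega_{ij}^k$ are constant. Consequently \eqref{Codazzi} and \eqref{Gauss} reduce to algebraic identities among these constants, to be solved subject to the $\Delta_2$-frame symmetry relations recalled before Lemma~\ref{nablapcase2} and to the constraint $2\theta_1+\theta_2\equiv 0 \pmod{\pi}$ coming from Lemma~\ref{propAB}.

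The second step is to prune the unknowns. Since the angle functions are constant, Lemma~\ref{nablapcase2} immediately gives $h_{22}^2=0$ and $h_{12}^3=0$ (from $E_2(\theta_1)=E_3(\theta_1)=0$), and then $h_{33}^1=h_{33}^2=h_{33}^3=0$. Combining this with $h(E_1,E_1)=0$ and the symmetries $h_{ij}^k=h_{ji}^k=h_{i\widehat{k}}^{\widehat{j}}$, one checks that every component of $h$ vanishes except $h_{22}^1$ and $h_{22}^3$, with $h_{23}^1=h_{22}^3$; equivalently $h(E_2,E_2)=h_{22}^1\,JE_1+h_{22}^3\,JE_3$ and $h(E_2,E_3)=h_{22}^3\,JE_1$ are the only nonzero values. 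At the same time I record the surviving $\omega_{ij}^k$ using $\omega_{ij}^k=-\omega_{i\widehat{k}}^{\widehat{j}}$, $\omega_{i3}^3=0$ and $\omega_{i1}^2=\omega_{i2}^1=0$, so that only a handful of connection coefficients remain free.

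The third step fixes the angles. Here I feed the pruned data into the Codazzi equation \eqref{Codazzi}, computing the normal connection through $\nabla^{\bot}_X(JY)=J\nabla_XY+G(X,Y)$ (which follows from $\tilde\nabla_X(JY)=J\tilde\nabla_XY+G(X,Y)$ together with Gauss and Weingarten, using that $G(X,Y)$ is normal by \eqref{lagrprop}), with the relevant values of $G$ read off Table~\eqref{tabla}. Matching the $JE_2$-component of \eqref{Codazzi} for $X=E_2$, $Y=Z=E_3$ is especially clean: its left-hand side vanishes because $\omega_{31}^2=0$, so it collapses to $\sin(2\theta_2-2\theta_1)=0$, i.e.\ $\theta_2\equiv\theta_1\pmod{\pi/2}$. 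Together with $2\theta_1+\theta_2\equiv 0$ and the exclusion $\theta_1\neq 0,\pi/2$, the remaining Codazzi and Gauss components eliminate the parasitic branch $\theta_2\equiv\theta_1+\tfrac{\pi}{2}$ and leave $\theta_1=\theta_2=\tfrac{\pi}{3}$ modulo $\pi$. Since now $\theta_1=\theta_2$, I may adopt the normalized frame of Proposition~\ref{frameuniquecase2}, for which $h_{22}^1=0$.

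With $\theta_1=\theta_2=\tfrac{\pi}{3}$ and $h_{22}^1=0$ the only independent unknowns left are $h_{22}^3$ and the surviving $\omega_{ij}^k$, and the full set of Codazzi and Gauss components is now a determined algebraic system; solving it yields precisely the two sign-distinguished solutions in items \eqref{typeIIfirst} and \eqref{typeIIsecond}. Substituting either solution back into the Gauss equation \eqref{Gauss} collapses the curvature tensor to $R(X,Y)Z=-\tfrac{3}{2}\big(g(Y,Z)X-g(X,Z)Y\big)$, giving constant sectional curvature $-\tfrac{3}{2}$. I expect the main obstacle to be the bookkeeping in the lightlike $\Delta_2$ frame: index raising interchanges the roles of $E_1$ and $E_2$, the normal derivatives must be tracked through $G$, and one must argue carefully that the spurious angle branch is genuinely excluded so that exactly $\theta_1=\theta_2=\tfrac{\pi}{3}$ and exactly the two listed branches survive.
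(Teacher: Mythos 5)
Your proposal is correct in substance and, at its core, follows the paper's own strategy: constancy of all structure functions via Proposition~\ref{frameuniquecase2}, pruning of $h$ via Lemma~\ref{nablapcase2} plus the $\Delta_2$-symmetries (indeed only $h_{22}^1$, $h_{22}^3$ and $h_{23}^1=h_{22}^3$ survive), and the very same Codazzi component (the $JE_2$-part of \eqref{Codazzi} for $X=E_2$, $Y=Z=E_3$, whose left-hand side does vanish because $\omega_{31}^2=0$) to force $\sin 2(\theta_1-\theta_2)=0$. Where you genuinely diverge is in how the algebra is closed. The paper feeds Equation~\eqref{nablap} into the frame once more, obtaining the \emph{linear} relations $\omega_{11}^1=\omega_{11}^3=\omega_{22}^2=\omega_{33}^2=0$, $\omega_{21}^3=\sqrt{3}\,h_{22}^3-\tfrac{1}{\sqrt{6}}$, $\omega_{31}^1=\tfrac{\sqrt{3}}{2}h_{22}^3+\tfrac{1}{\sqrt{6}}$; after that, Codazzi alone yields the quadratic $-9(h_{22}^3)^2+3\sqrt{2}\,h_{22}^3+4=0$ and kills the branch $\theta_2\equiv\theta_1+\tfrac{\pi}{2}$ in two lines, with Gauss used only for the final curvature claim. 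You never invoke \eqref{nablap} beyond Lemma~\ref{nablapcase2} and let Gauss do that work instead. I checked that this route does close: Gauss with $(E_1,E_3,E_3)$ and $(E_2,E_3,E_3)$ gives $(\omega_{12}^3)^2=(\omega_{21}^3)^2=\tfrac{3}{2}$ in the equal-angle case, your two Codazzi components exclude opposite signs and fix $h_{22}^3$, and further Gauss components force $\omega_{21}^1=\omega_{32}^3=0$ and, in case (1), $\omega_{22}^3=0$; in the parasitic branch, Gauss for $(E_1,E_2,E_2)$ gives $\omega_{12}^3\omega_{21}^3+(\omega_{12}^3-\omega_{21}^3)\omega_{31}^1=\tfrac{3}{2}$, which is incompatible with $(\omega_{12}^3)^2=(\omega_{21}^3)^2=\tfrac{1}{6}$ and the Codazzi relations there. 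So the plan works, but it buys this self-containedness at the price of a heavier computation, and three points in your sketch must be made honest. First, ``determined algebraic system'' is inaccurate: in case (2) the coefficient $2\omega_{31}^1-\omega_{12}^3-\omega_{21}^3$ multiplying $\omega_{22}^3$ in the relevant Gauss component vanishes, so $\omega_{22}^3$ remains a free parameter --- this is not a flaw of the Proposition (it is exactly the $\lambda$-family of Example~\ref{type2bsubexample}, and case (2) assigns no value to $\omega_{22}^3$), but a ``solve'' step expecting a zero-dimensional solution set would be wrong. Second, in the parasitic branch the normalization $h_{22}^1=0$ of Proposition~\ref{frameuniquecase2} is unavailable (there $\theta_1\neq\theta_2$), so $h_{22}^1$ is an extra unknown and the degenerate sub-case $h_{22}^3=0$, $h_{22}^1\neq 0$ must be excluded separately (Codazzi for $(E_1,E_2,E_2)$ and $(E_3,E_2,E_2)$ combined with Gauss for $(E_1,E_3,E_3)$ do this); this is precisely where your sketch defers to ``remaining components,'' and it is the step that the paper's use of \eqref{nablap} renders trivial. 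Third, no amount of Gauss--Codazzi algebra can prefer $\theta_1=\theta_2=\tfrac{\pi}{3}$ over $\tfrac{2\pi}{3}$, since both occur on congruent submanifolds; like the paper, you must apply Lemma~\ref{isometrieswithAB} and one of the isometries $\Psi_{0,\tau}$ of \eqref{isoslsl} to normalize the angles, a reduction your sketch omits.
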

\begin{proof}
    As indicated in Proposition \ref{frameuniquecase2}, we have to distinguish between two cases: when $\theta_1=\theta_2$ and when $\theta_1\neq\theta_2$.

    Suppose first that $\theta_1\neq\theta_2$. By Proposition \ref{frameuniquecase2} both angles and the functions $h_{ij}^k$, $\omega_{ij}^k$ are constant.
    It follows from computing the Codazzi equation \eqref{Codazzi} with $X=E_3$, $Y=E_2$ and $Z=E_3$ that $\sin (2 (\theta_1-\theta_2))=0$, hence $\theta_1$ and $\theta_2$ are equal modulo $\pi/2$. 
    Recall that for type II submanifolds $\sin 2\theta_1$ is different from zero, $2\theta_1+\theta_2=0$ modulo $\pi$ and  the angles are different modulo $\pi$. 
    Therefore $\theta_1=\tfrac{\pi}{6}$, $\theta_2=\tfrac{2\pi}{3}$ or $\theta_1=\tfrac{5\pi}{6}$, $\theta_2=\tfrac{\pi}{3}$. 
    From Lemma \ref{isometrieswithAB} we know that these two cases are congruent via the isometry $\Psi_{0,1}$ given in \eqref{isoslsl}. It follows from Equation \eqref{nablap} that 
        \begin{equation*}
            \omega_{11}^1=\omega_{11}^3=\omega_{33}^2=0, \ \ \ \ \omega_{21}^3=-\frac{1}{\sqrt{6}}, \ \ \ \ \omega_{12}^3=\frac{1}{\sqrt{6}},\ \ \ \ \ \omega_{31}^1=\frac{\sqrt{3} }{2}h_{22}^3+\frac{1}{\sqrt{6}}.
        \end{equation*}
    Computing the Codazzi equation \eqref{Codazzi} with $X=E_3$, $Y=E_2$, $Z=E_3$ and $X=E_1$, $Y=E_2$, $Z=E_2$ we obtain 
        \[
            \frac{4-3 h_{22}^3 \left(3 h_{22}^3+\sqrt{2}\right)}{3 \sqrt{3}}=0, \ \ \ \  8 \sqrt{3}-3 \sqrt{6}  h_{22}^3=0,
        \]
    which is a contradiction.

    Suppose now that $\theta_1=\theta_2$ modulo $\pi$. Using that $2\theta_1+\theta_2=0$ modulo $\pi$  we deduce $\theta_1=\theta_2=\pi/3$ or $2\pi/3$.
    By Lemma \ref{isometrieswithAB} we know that these cases are congruent via the isometry $\Psi_{0,1}$ given in \eqref{isoslsl}. Thus, we only consider the cases where $\theta_1=\theta_2=\pi/3$.

    By Lemma \ref{nablapcase2} we have $h_{22}^2=h_{12}^3=0$. 
    Moreover, by Proposition \ref{frameuniquecase2} we may assume that $h_{22}^1=0$ and that all the functions $h_{ij}^k$, $\omega_{ij}^k$ are constant. 
    Hence, from Equation \eqref{nablap} we obtain
        \[
            \omega_{11}^1=\omega_{11}^3=\omega_{22}^2=\omega_{33}^2=0.
        \] 
    We also get
        \[
            \omega_{21}^3=\sqrt{3} h_{22}^3-\frac{1}{\sqrt{6}}, \ \ \ \ \ \ \omega_{31}^1=\frac{\sqrt{3}}{2}h_{22}^3+\frac{1}{\sqrt{6}}.
        \]
        
    Thus computing the Codazzi equation with $X=E_3$, $Y=E_2$ and $Z=E_2$ yields $\omega_{33}^1=0$. 
    Moreover, we obtain
        \[
            -9 (h_{22}^3)^2+3 \sqrt{2}h_{22}^3+4=0.
        \]
    Hence $h_{22}^3=-\frac{\sqrt{2}}{3}$ or $h_{22}^3=\frac{2 \sqrt{2}}{3}$. 

    If $h_{22}^3=-\frac{ \sqrt{2}}{3}$ then the Codazzi equation with $X=E_1$, $Y=E_2$ and $Z=E_2$ implies $\omega_{12}^3=-\sqrt{\frac{3}{2}}$. 
    Also we obtain $\omega_{22}^3=0$. 

    If instead $h_{22}^3=\frac{2 \sqrt{2}}{3}$ we obtain $\omega_{12}^3=\sqrt{\frac{3}{2}}$. 
    A straightforward computation shows that both cases have constant sectional curvarture equal to $-\tfrac{3}{2}$.
\end{proof}
      
Now we exhibit two examples of extrinsically homogeneous Lagrangian submanifolds of type II in Lemma \ref{propAB}.

\begin{example}\label{type2asubmanifoldexample}
Let $\R\ltimes_{\varphi_0}\R^2$ be the Bianchi group of type V with group law given by
\[(t,u)\cdot(s,v)=(t+s, \varphi_0(s)u+v) \] 
where $\varphi_0\colon\R\to \mathrm{Aut}(\R^2)$ is given by
\[
    \varphi_0(s)=
    \begin{pmatrix}
    e^{-2s}&0\\
    0& e^{-2s}
\end{pmatrix}.
\]
Let $\hat{g}$ be the right invariant metric such that its components with respect to the frame of right invariant vector fields  $\left\{\pdv{}{t},e^{-2t}\pdv{}{u_1},e^{-2t}\pdv{}{u_2}\right\}$ are given by
\[\left(
\begin{array}{ccc}
 \frac{8}{3} & 0 & 0 \\
 0 & -\frac{3}{2} & 0 \\
 0 & 0 & \frac{3}{2} \\
\end{array}
\right).\]
In fact, this Lorentzian manifold is simply connected, geodesically complete and it has consant sectional curvature equal to $-\tfrac{3}{2}$. 
Hence, by the pseudo-Riemannian analogue of the Killing-Hopf theorem (see \cite{oneill}), it is isometric to $\tilde{H}^3_1(-\tfrac{3}{2})$, the universal cover of the anti-de Sitter space.

Now let $\mathfrak{h}$ be the Lie subalgebra of $\slf\oplus\slf\oplus\slf$ spanned by $\{e_1,e_2,e_3\}$ where 
\begin{equation*}
    \begin{split}
        e_1&=\Big(\ii,-\ii,-\ii\Big),\\
        e_2&=\left(\frac{9}{4}(\jj-\kk),\frac{1}{2}(\jj+\kk),0\right),\\
        e_3 &=\left(\frac{9}{4}(-\jj+\kk),0,\frac{1}{2}(\jj+\kk)\right),        
    \end{split}
\end{equation*}
where $\ii$, $\jj$ and $\kk$ are given in \eqref{ijksl2R}.
The Lie algebra $\mathfrak{h}$ is a Bianchi Lie algebra of type V with brackets
\[
[e_1,e_2]=-2e_2, \ \ \ [e_1,e_3]=-2e_3, \ \ \ [e_2,e_3]=0.
\]
Therefore $\R\ltimes_{\varphi_0}\R^2$ is the universal cover of $\exp(\mathfrak{h})\subset\Sl\times\Sl\times\Sl$. Moreover, the map 
\[
(w,u,v)\mapsto \exp( w e_1+ ue^{w } w \csch(w )e_2+v e^{w } w  \csch(w )e_3)
\]
defined at $w=0$ as $\exp(ue_2+v e_3)$, is a group isomorphism, thus $\exp \mathfrak{h}\cong\R\ltimes_{\varphi_0}\R^2 $. One can check that the immersion  $\iota:\R\ltimes_{\varphi_0}\R^2\to\Sl\times\Sl$ given by
\begin{equation}
    \iota(w,u,v)=\exp( w e_1+ ue^{w } w \csch(w )e_2+v e^{w } w  \csch(w )e_3)\cdot (\id_2,\id_2) \label{mapiota}
\end{equation}
is a Lagrangian immersion, whose image is extrinsically homogeneous. The frame given by
\[
E_1 =-e^{-2w}\iota_u-e^{-2w}\iota_v, \ \ \ \ E_2=\tfrac{1}{3}e^{-2w}\iota_u-\tfrac{1}{3}e^{-2w}\iota_v, \ \ \ \ E_3=\sqrt{\tfrac{3}{8}}\iota_w,
\]
is a $\Delta_2$-orthonormal frame with respect to which $A$ and $B$ take type II form in Lemma \ref{propAB}, with angle functions $\theta_1=\theta_2=\tfrac{\pi}{3}$.
\end{example}

\begin{example}\label{type2bsubexample}
    Let $\R\ltimes_{\varphi_1}\R^2$ be the Bianchi group of type III  with group law
\[
    (t,u)\cdot (s,v)=(t+s,\varphi_1(s)u+v)
\]
    where $\varphi_1\colon\R\to\mathrm{Aut}(\R^2)$ is given by
\[
    \varphi_1(s)=\begin{pmatrix}
        e^{2s} & 0\\
        0 & 1\\
    \end{pmatrix}.
\]
    Let $g_\lambda$ be the right invariant metric on  $\R\ltimes_{\varphi_1}\R^2$ such that its components with respect to the frame of right invariant vector fields $\left\{\pdv{}{t},e^{2t}\pdv{}{u_1},\pdv{}{u_2}\right\}$ are given by
\[
    \begin{pmatrix}
 \frac{2}{3} & 0 & 0 \\
 0 & 0 & 1 \\
 0 & 1 & \frac{2 (\lambda -1)}{3} \\
    \end{pmatrix},
\]
    where $\lambda$ is an arbitrary real number.
As in Example \ref{type2asubmanifoldexample}, this Lorentzian manifold is simply connected, geodesically complete and it has consant sectional curvature equal to $-\tfrac{3}{2}$. Therefore it is isometric to $\tilde{H}^3_1(-\tfrac{3}{2})$.

Let $\mathfrak{h}$ be the Lie subalgebra of $\slf\oplus\slf\oplus\slf$ spanned by $\{e_1,e_2,e_3\}$ where 
\begin{equation*}
    \begin{split}
e_1&=(\ii,0,0),\\
e_2&=(\tfrac{1}{2}(\jj+\kk),0,0),\\
e_3&=(0,-\tfrac{ \lambda +7}{6}\jj+\tfrac{11-\lambda }{6}\kk,-\tfrac{\lambda +9}{6}\jj+\tfrac{9-\lambda }{6}\kk).
    \end{split}
\end{equation*}
where $\ii$, $\jj$, $\kk$ are given in \eqref{ijksl2R}.
The Lie algebra $\mathfrak{h}$ is a Bianchi Lie algebra of type III with brackets
\[
[e_1,e_2]=2e_2, \ \ \ \ [e_1,e_3]=0, \ \ \ \ [e_2,e_3]=0.
\]
Therefore $\R\ltimes_{\varphi_1}\R^2$ is the universal cover of $\exp(\mathfrak{h})$. The map $\phi_\lambda\colon \R\ltimes_{\varphi_1}\R^2\to\exp(\mathfrak{h})$ given by
\[
\phi_\lambda(u,v,w)=\exp( w e_1+\frac{u w e^{-w}}{\sinh w} e_2+v e_3)
\]
with $\frac{w e^{-w}}{\sinh w}$ extended to $1$ when $w=0$, is a surjective homomorphism with 
\[
H_\lambda=\ker(\phi_\lambda)\cong\left\{\begin{array}{cc}
    \mathbb{Z}  & \text{ when $\lambda=\frac{2n^2}{n^2-m^2}$, $m>n>0$  integers,} \\
     \{0\}  & \text{otherwise}.
\end{array}\right.
\]
One can check that the map $f_\lambda\colon(\R\ltimes_{\varphi_1}\R^2)/H_\lambda\to\Sl\times\Sl$ given by
\[
f_\lambda(u,v,w)=\phi_\lambda(u,v,w)\cdot(\id_2,\id_2),
\]
is a Lagrangian immersion, whose image is extrinsically homogeneous. The frame $\{E_1,E_2,E_3\}$ given by
\[
E_1=e^{2w}(f_{\lambda})_u,\ \ \ \ \ E_2=\frac{1-\lambda}{3}e^{2w}(f_{\lambda})_u+  (f_{\lambda})_v, \ \ \ \ \ E_3=\sqrt{\frac{3}{2}}(f_{\lambda})_w
\]
is a $\Delta_2$-orthonormal frame with respect to which  $A$ and $B$ take type II form in Lemma \ref{propAB} with $\theta_1=\theta_2=\tfrac{\pi}{3}$.
\end{example}

\begin{remark}
    For any pair $\lambda_1, \lambda_2$ the subgroups $\phi_{\lambda_i}(\R\ltimes_{\varphi_1}\R^2)$ of $\Sl\times\Sl\times\Sl$ are non-conjugate. 
    That is, there does not exist an automorphism of $\Sl\times\Sl\times\Sl$ preserving the isotropy 
    subgroup $\Delta\Sl$
    that maps $\phi_{\lambda_1}(\R\ltimes_{\varphi_1}\R^2)$ into $\phi_{\lambda_2}(\R\ltimes_{\varphi_1}\R^2)$. 
    This can be easily seen since conjugations by elements of $\mathrm{SL}^\pm(2,\R)$ preserve the indefinite inner product of $\slf$ given in \eqref{prodsl2}.
   \end{remark}

\begin{proposition}\label{type2asubmanifold}
Let $f\colon M\to\Sl\times\Sl$ be an extrinsically homogeneous Lagrangian submanifold of the pseudo-nearly Kähler $\Sl\times\Sl$. Suppose that $A$ and $B$ take type II form in Lemma \ref{propAB} with respect to a $\Delta_2$-orthonormal frame $\{E_1,E_2,E_3\}$. Then $M$ is congruent to an open subset of either the image of the immersion in Example \ref{type2asubmanifoldexample}, or the image of the immersion in Example~\ref{type2bsubexample}.
\end{proposition}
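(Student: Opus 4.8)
The plan is to reconstruct $f$, up to congruence, from its constant local invariants, exactly in the spirit of the proofs of Propositions \ref{psl} and \ref{tori}. By Proposition \ref{propositiontype2twotypes} we may split into the two cases $h_{22}^3=-\tfrac{\sqrt 2}{3}$ and $h_{22}^3=\tfrac{2\sqrt 2}{3}$; in each the angle functions are $\theta_1=\theta_2=\pi/3$ and all $\omega_{ij}^k,h_{ij}^k$ are constant, with the extra feature that in the second case $\omega_{22}^3$ is left undetermined by the Codazzi equations. I expect the first case to produce the immersion $\iota$ of Example \ref{type2asubmanifoldexample} and the second, with $\omega_{22}^3$ as a free parameter, to produce the family $f_\lambda$ of Example \ref{type2bsubexample}.

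First I would fix the $\Delta_2$-orthonormal frame and write $f=(p,q)$ with $f_*(E_i)=(p\alpha_i,q\beta_i)$, $\alpha_i,\beta_i\in\slf$. Using the explicit form of $P$ on $\{E_1,E_2,E_3\}$ (which depends only on $\theta_1=\theta_2=\pi/3$, hence is the same in both cases) together with \eqref{prodQ}, I would compute $Q$ on the frame; as in Propositions \ref{psl} and \ref{tori}, the eigenvalue $\pm1$ relations of $Q$ force linear constraints among the $\beta_i$ (one finds $\beta_1=\beta_3=0$ and $\beta_2$ a multiple of $\alpha_1$), so that $f$ is effectively governed by the $\alpha_i$ alone, while \eqref{prodmetric} pins down the Gram matrix of $\{\alpha_1,\alpha_2,\alpha_3\}$ in $\slf$. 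Next I would compute $\nabla^E_{E_i}E_j$ from the constants $\omega_{ij}^k,h_{ij}^k$ via the Gauss formula and \eqref{relprodkal}, and substitute into \eqref{connectionr8}; comparing with the expansion of $D_{E_i}D_{E_j}f$ furnished by \eqref{prodinslf}, the $\id_2$-parts reproduce the Gram relations while the $\slf$-parts yield a first-order system of the schematic form $E_i(\alpha_j)=\text{(constant)}\,\alpha_k-\alpha_i\times\alpha_j$ for the $\slf$-valued functions $\alpha_i$, together with the corresponding equations for the $\beta_i$.

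Because the $\omega_{ij}^k$ are constant, the frame $\{E_1,E_2,E_3\}$ closes into a solvable three-dimensional Lie algebra: a Bianchi algebra of type V in the first case and of type III in the second, which are precisely the Lie algebras of $\R\ltimes_{\varphi_0}\R^2$ and $\R\ltimes_{\varphi_1}\R^2$. I would therefore parametrize $M$ by the corresponding solvable group in the exponential coordinates used in Examples \ref{type2asubmanifoldexample} and \ref{type2bsubexample}, integrate the first-order system, and check directly that the functions $\alpha_i,\beta_i$ arising from $\iota$ (respectively $f_\lambda$) solve it. Using the homogeneity of $\Sl\times\Sl$ to normalize the base point to $(\id_2,\id_2)$, and Lemma \ref{lemmasl2requaltoso21} to adjust the initial frame by a conjugation isometry so that the $\alpha_i$ at the base point coincide with those of the example (possible since they share the same Gram matrix), uniqueness of solutions of the first-order system then yields the congruence.

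The main obstacle, compared with Proposition \ref{tori}, is that here the frame is non-holonomic: the brackets $[E_i,E_j]$ no longer vanish, so there are no product coordinates in which $E_i=f_{x_i}$, and the integration must be carried out inside the solvable group rather than on a flat model. The second delicate point is the parameter count in the second case: the Codazzi equations leave $\omega_{22}^3$ free, so I would need to verify that this single free invariant corresponds exactly to the family parameter $\lambda$ of Example \ref{type2bsubexample} — matching $\omega_{22}^3$ to a function of $\lambda$ — and that distinct values yield non-congruent immersions, consistent with the non-conjugacy of the subgroups $\phi_\lambda(\R\ltimes_{\varphi_1}\R^2)$ recorded in the remark following that example.
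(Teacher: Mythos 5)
Your proposal is correct and follows essentially the same strategy as the paper's proof: split into the two cases of Proposition \ref{propositiontype2twotypes}, use the $Q$-relations derived from \eqref{prodQ} to kill the second component of the immersion (in the paper's notation $q_u=q_w=0$ and $q_v=\tfrac{2}{3}qp^{-1}p_u$, i.e.\ exactly your $\beta_1=\beta_3=0$ and $\beta_2=\tfrac{2}{3}\alpha_1$), integrate the structure equations coming from \eqref{relprodkal} and \eqref{connectionr8} with initial conditions normalized by homogeneity and Lemma \ref{lemmasl2requaltoso21}, and identify the free constant via $\omega_{22}^3=\sqrt{\tfrac{2}{3}}(1-\lambda)$ — all of which matches the paper. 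The one place you diverge is precisely the ``main obstacle'' you flag: the paper never integrates on the Bianchi group with the non-holonomic frame. Instead it rescales and shears the frame so that all brackets vanish — $\{\rho E_1,\rho E_2,E_3\}$ with $E_1(\rho)=E_2(\rho)=0$, $E_3(\rho)=\sqrt{\tfrac{3}{2}}\rho$ in the first case, and $\{\rho E_1,\,-\tfrac{1}{\sqrt{6}}\omega_{22}^3E_1+E_2,\,E_3\}$ with $E_3(\rho)=-\sqrt{6}\rho$ in the second — which yields genuine coordinates $u,v,w$ with the modified frame equal to $\{f_u,f_v,f_w\}$, so the integration reduces to an explicit PDE system for $(p,q)$ exactly as in Proposition \ref{tori}. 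Your group-theoretic integration is also viable (and your identification of the Lie algebras as Bianchi types V and III is exactly right; it is the same computation that underlies the paper's choice of shear), but the rescaling trick dissolves the obstacle rather than confronting it, and keeps the uniqueness argument at the level of ODE/PDE solutions in coordinates. One last remark: the non-congruence of the $f_\lambda$ for distinct $\lambda$, which you propose to verify here, is not needed for this proposition — the paper defers it to the proof of Theorem \ref{maintheorem}, where it is settled by comparing the invariant $\omega_{33}^2$ under isometries via Lemma \ref{isometrieswithAB}.
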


\begin{proof}
    Because of Proposition \ref{propositiontype2twotypes} we may assume that $\theta_1=\theta_2=\tfrac{2}{3}\pi$. Moreover, we divide in two cases. 

Suppose that $\omega_{ij}^k$ and $h_{ij}^k$ satisfy (\ref{typeIIfirst}) in Proposition \ref{propositiontype2twotypes}.
We take the frame $\{\rho E_1,\rho E_2,E_3\}$ where $\rho$ is a non-vanishing smooth function and solution of 
\begin{equation}
    E_1(\rho)=E_2(\rho)=0,\ \ \ \  E_3(\rho)=\sqrt{\tfrac{3}{2}}\rho.\label{rhoeqtype1}
\end{equation}
 It is easy to check that $\rho$ indeed exists and that 
\[
[\rho E_1,\rho E_2]=[\rho E_1, E_3]=[\rho E_2, E_3]=0.
\]
Hence, there exist local coordinates ${u,v,w}$ such that $\rho E_1=f_u$, $\rho E_2=f_v$ and $E_3=f_w$ and hence $\rho(w)=e^{\sqrt{\tfrac{3}{2}}w}$.
It follows from the relation between $Q$ and $P$ given in \eqref{prodQ} that
\begin{equation*}
    Qf_u=-f_u, \ \ \ Qf_v=-\frac{2}{3}f_u-f_v+\frac{2}{\sqrt{3}}Jf_u, \ \ \ Qf_w=-f_w. 
\end{equation*}
Writing $f=(p,q)$ yields 
\begin{equation}
   q_u=q_w=0, \ \ \ \  q_v=\tfrac{2}{3}qp^{-1}p_u.\label{qv}
\end{equation}
Using the relation between the nearly Kähler connection $\tilde{\nabla}$ and the product connection $\nabla^E$ given in \eqref{relprodkal} we obtain
\begin{equation}
    \begin{aligned}
        \nabla^E_{f_u}f_u&=0,      &&\nabla^E_{f_u}f_v=-\sqrt{\tfrac{3}{2}} \rho ^2f_w,  &&\nabla^E_{f_u}f_w\sqrt{\tfrac{3}{2}}f_u,\\
        \nabla^E_{f_v}f_u&=-\sqrt{\tfrac{3}{2}} \rho ^2f_w, \ \ \ \  &&  \nabla^E_{f_v}f_v= -\sqrt{\tfrac{2}{3}} \rho ^2f_w, \ \ \ \ &&\nabla^E_{f_v}f_w=\tfrac{1}{\sqrt{6}}f_u+\sqrt{\tfrac{3}{2}} f_v-\tfrac{1 }{\sqrt{2}}Jf_u, \\
        \nabla^E_{f_w}f_u&=\sqrt{\tfrac{3}{2}}f_u, && \nabla^E_{f_w}f_w=0, &&\nabla^E_{f_w}f_v=\tfrac{1 }{\sqrt{6}}f_u+\sqrt{\tfrac{3}{2}} f_v-\tfrac{1 }{\sqrt{2}}Jf_u. \label{prdoconex1}
    \end{aligned}
\end{equation}

Now we compute 
\begin{equation}
    \begin{aligned}
        \li f_u,f_u\ri&=0, \ \ \ \ \ \ \ &&\li f_v,f_v\ri=\rho^2,\ \ \ \ \ \ \  &&\li f_w,f_w\ri=\tfrac{3}{2}, \\
        \li f_u,f_v\ri&=\tfrac{3}{2}\rho^2, &&\li f_u,f_w\ri=0, &&\li f_v,f_w\ri=0,
    \end{aligned}\label{metricsf}
\end{equation}
and
\begin{equation}
    \begin{aligned}
        \li f_u,Qf_u\ri&=0, \ \ \ \ &&\li f_u,Qf_w\ri=0, \ \ \ \ &&\li f_v,Qf_w\ri=0, \\
        \li f_u,Qf_v\ri&=-\tfrac{3}{2}\rho^2, &&\li f_v,Qf_v\ri=-\rho^2, &&\li f_w,Qf_w\ri=-\tfrac{3}{2},
    \end{aligned}\label{metricsq}
\end{equation}
where $\li,\ri$ is the product metric associated to the metric on $\Sl$ given in \eqref{prodsl2}.
In particular, we have
\begin{equation}
    \begin{aligned}
        \li p_u,p_u\ri&=0,  &&\li p_v,p_v\ri=\rho^2, && \li p_w,p_w\ri=\tfrac{3}{2}\\
        \li p_u,p_v\ri&=\tfrac{3}{2}\rho^2,  &&\li p_u,p_w\ri=0, && \li p_v,p_w\ri=0.\\
    \end{aligned}\label{metricspex1case2}
\end{equation}
Here, $\li,\ri$ is the metric on $\Sl$ given in \eqref{prodsl2}.

To compute the second derivatives of $f$, we use the expression for the Euclidean connection of $\R^8_4$ in \eqref{connectionr8}. 
Plugging \eqref{prdoconex1}, \eqref{metricsf} and \eqref{metricsq} into \eqref{connectionr8} we obtain
\begin{equation*}
    \begin{aligned}
        f_{uu}&=0, &&f_{uv}=-\sqrt{\tfrac{3}{2}} \rho ^2f_w+\tfrac{3}{4}\rho^2f-\tfrac{3}{4}\rho^2Qf,\\
        f_{uw}&=\sqrt{\tfrac{3}{2}}f_u,  && f_{vv}=-\sqrt{\tfrac{2}{3}}\rho^2 f_w +\tfrac{1}{2}\rho^2f-\tfrac{1}{2}\rho^2Qf,\\
        f_{vw}&=\tfrac{1 }{\sqrt{6}}f_u+\sqrt{\tfrac{3}{2}} f_v-\tfrac{1 }{\sqrt{2}}Jf_u,\ \ \ \ \ \  &&f_{ww}=\tfrac{3}{4}f-\tfrac{3}{4}Qf.
    \end{aligned}
\end{equation*}
Looking at each component of $f$ we obtain differential equations for $p$ and $q$:
\begin{equation}
    \begin{aligned}
     p_{uu}&=0, &&p_{uv}=-\sqrt{\tfrac{3}{2}}\rho^2p_w+\tfrac{3}{2}\rho^2 p, \ \ \ \ \ &&p_{uw}=\sqrt{\tfrac{3}{2}}p_u, &&\\
     p_{vw}&=\sqrt{\tfrac{3}{2}}p_v, \ \ \ \ \  &&  p_{vv}=-\sqrt{\tfrac{2}{3}}\rho^2p_w+\rho^2p,&&p_{ww}=\tfrac{3}{2}p,  &&q_{vv}=0.\label{difeqpq}
    \end{aligned}
\end{equation}
The other derivatives of $q$ are zero because of \eqref{qv}.
 
Applying an isometry of the type $(p,q)\mapsto(ap,bq)$, we may assume initial conditions $(p(0),q(0))=(\id_2,\id_2)$. Then from \eqref{metricspex1case2} and \eqref{qv} it follows that there exists $c\in\mathrm{SL}^\pm(2,\R)$ such that
\begin{equation*}
    \begin{aligned}
        p_u(0)&=c\left(
\begin{array}{cc}
 0 & 1 \\
 0 & 0 \\
\end{array}
\right)c^{-1}, && p_v(0)=c\left(
\begin{array}{cc}
 0 & \frac{1}{3} \\
 3 & 0 \\
\end{array}
\right)c^{-1}, \\
p_w(0)&=\sqrt{\frac{3}{2}}c\left(
\begin{array}{cc}
 1 & 0 \\
 0 & -1 \\
\end{array}
\right)c^{-1}, && q_v(0)=c\left(
\begin{array}{cc}
 0 & \frac{2}{3} \\
 0 & 0 \\
\end{array}
\right)c^{-1}.
    \end{aligned}
\end{equation*}
 Applying the isometry  $(p,q)\mapsto(cpc^{-1},cqc^{-1})$ we obtain that any solution of \eqref{difeqpq} is congruent to an open subset of the immersion $f=(p,q)$ where
\begin{equation*}
p(u,v,w)=\left(
\begin{array}{cc}
 e^{\sqrt{\frac{3}{2}} w} & e^{\sqrt{\frac{3}{2}} w} \left(u+\frac{v}{3}\right) \\
 3 e^{\sqrt{\frac{3}{2}} w} v & e^{\sqrt{\frac{3}{2}} w} \left(v^2+3 u v\right)+e^{-\sqrt{\frac{3}{2}} w} \\
\end{array}
\right), \ \ \ \ 
q(v)=\left(
\begin{array}{cc}
 1 & \frac{2 v}{3} \\
 0 & 1 \\
\end{array}
\right).
\end{equation*}
Finally, taking the change of coordinates $w\to2 \sqrt{\frac{2}{3}}w$, $u\to-\frac{1}{2} (u+v)$ and $v\to\frac{3}{2} (u-v) $ we get the immersion in \eqref{mapiota}.

Now suppose that $\omega_{ij}^k$ and $h_{ij}^k$ satisfy (\ref{typeIIsecond}) in Proposition \ref{propositiontype2twotypes}.
In this case, $\omega_{22}^3$ is constant. 
We define the constant $\lambda$ as $\omega_{22}^3=\sqrt{\frac{2}{3}} (1-\lambda )$.   

Take the frame $\left\{\rho E_1,-\tfrac{1}{\sqrt{6}}\omega_{22}^3 E_1+E_2,E_3\right\}$, where $\rho$ is a non-vanishing smooth function and solution of 
\[E_1(\rho)=E_2(\rho)=0, \ \ \ \ E_3(\rho)=-\sqrt{6}E_3.\]
Using Proposition \ref{propositiontype2twotypes} we can easily check that this is a coordinate frame. 
We call this frame $\{f_u,f_v,f_w\}$.
First we notice that $\rho=e^{-\sqrt{6}w}$. We obtain from Equation \eqref{prodQ} that
\begin{equation*}
    \begin{split}
        Qf_u=-f_u, \ \ \ Qf_v=-\tfrac{2}{3\rho}f_u-f_v+\tfrac{2}{\sqrt{3}\rho}Jf_u, \ \ \ Qf_w=-f_w,
    \end{split}
\end{equation*}
We deduce 
\begin{equation}
q_u=q_w=0, \ \ \ \ \ q_v=\tfrac{2}{3\rho}qp^{-1}p_u.   \label{qvlambda}
\end{equation}
Equation \eqref{relprodkal} gives us the following expressions
\begin{equation*}
    \begin{aligned}
    \nabla^E_{f_u}f_u&=0, && \nabla^E_{f_u}f_v=\sqrt{\frac{3}{2}} \rho f_w, \ \ \ \ && \nabla^E_{f_v}f_v=-\sqrt{\frac{2}{3}}f_w+\sqrt{2}Jf_w, \\
    \nabla^E_{f_u}f_w&=-\sqrt{\frac{3}{2}}f_u, \ \ \  \  && \nabla^E_{f_w}f_w=0, &&
    \nabla^E_{f_v}f_w=\tfrac{2 \lambda-1}{\sqrt{6}\rho}f_u-\sqrt{\tfrac{3}{2}}f_v+\tfrac{1}{\sqrt{2}\rho}Jf_u. 
    \end{aligned}
\end{equation*}
From Equation \eqref{prodmetric} it follows
\begin{equation*}
    \begin{aligned}
        \li f_u,f_u\ri&=0, \ \ \ &&\li f_u,f_v\ri=\tfrac{3\rho}{2}, \ \ \ \ &&\li f_u,f_w\ri=0,\\
        \li f_v,f_v\ri&=\lambda, \ \ \ &&\li f_v,f_w\ri=0, \ \ \ \ &&\li f_w,f_w\ri=\tfrac{3}{2}.
    \end{aligned}
\end{equation*}
and 
\begin{equation*}
    \begin{aligned}
        \li f_u,Qf_u\ri&=0, \ \ \ &&\li f_u,Qf_v\ri=-\tfrac{3\rho}{2}, \ \ \ \ &&\li f_u,Qf_w\ri=0,\\
        \li f_v,Qf_v\ri&=-\lambda, \ \ \ &&\li f_v,Qf_w\ri=0, \ \ \ \ &&\li f_w,Qf_w\ri=-\tfrac{3}{2}.
    \end{aligned}
\end{equation*}
In particular, we have 
\begin{equation}
    \begin{aligned}
        \li p_u,p_u\ri&=0, &&\li p_v,p_v\ri=\lambda, && \li p_w,p_w\ri=\tfrac{3}{2},\\
        \li p_u,p_v\ri&=\tfrac{3}{2}\rho, &&\li p_u,p_w\ri=0, && \li p_v,p_w\ri=0.\\ 
    \end{aligned}\label{metricspex2case2}
\end{equation}
We may use Equation \eqref{connectionr8} to compute
\begin{equation*}
    \begin{aligned}
        f_{uu}&=0, &&f_{vv}=-\sqrt{\frac{2}{3}}f_w+\sqrt{2}Jf_w+\frac{1}{2} \lambda f-\frac{1}{2}\lambda Qf, &&f_{ww}=\tfrac{3}{4}f-\tfrac{3}{4}Qf,\\
        f_{uw}& =-\sqrt{\frac{3}{2}}f_u, &&     f_{vw}=\tfrac{2 \lambda-1}{\sqrt{6}\rho}f_u-\sqrt{\tfrac{3}{2}}f_v+\tfrac{1}{\sqrt{2}\rho}Jf_u, && f_{uv}=\sqrt{\frac{3}{2}} \rho f_w+\frac{3 \rho}{4}f-\frac{3 \rho}{4}Qf.
    \end{aligned}
\end{equation*}
Hence we obtain
\begin{equation}
    \begin{aligned}
    p_{uu}&=0, && p_{uv}=\sqrt{\tfrac{3}{2}}\rho p_w+\tfrac{3}{2}\rho p, && p_{uw}=-\sqrt{\tfrac{3}{2}}p_u, \ \ \ &&\\
    p_{vv}&=\lambda p, \ \ \ \  && p_{vw}=\tfrac{1}{\rho}\sqrt{\tfrac{2}{3}} \lambda p_u-\sqrt{\tfrac{3}{2}}p_v, \ \ \ \ &&  p_{ww}=\tfrac{3}{2}p, && q_{vv}=2\sqrt{\tfrac{2}{3}}qp^{-1}p_w.\\
    \end{aligned} \label{difeqpqex2}
\end{equation}
Applying an isometry of the type $(p,q)\mapsto (ap,bq)$ we may assume that $p(0)=\id_2$ and $q(0)=\id_2$. From \eqref{metricspex2case2} and \eqref{qvlambda} it follows that there exists a matrix $c\in\mathrm{SL}^\pm(2\R)$ such that 
\begin{equation*}
    \begin{aligned}
       p_u(0)&= c\left(
\begin{array}{cc}
 0 & 1 \\
 0 & 0 \\
\end{array}
\right) c^{-1}, &&  p_v(0)=c\left(
\begin{array}{cc}
 0 & \frac{\lambda }{3} \\
 3 & 0 \\
\end{array}
\right)c^{-1}, \\ 
p_w(0)&=\sqrt{\frac{3}{2}}c\left(
\begin{array}{cc}
 1 & 0 \\
 0 & -1 \\
\end{array}
\right)c^{-1}, &&  q_v(0)=c\left(
\begin{array}{cc}
 0 & \frac{2}{3} \\
 0 & 0 \\
\end{array}
\right)c^{-1}.
    \end{aligned}
\end{equation*}
Applying the isometry $(p,q)\mapsto(cp c^{-1},cqc^{-1})$ of $\Sl\times\Sl$ we obtain that any solution of \eqref{difeqpqex2} is congruent to the solution with $c=\id_2$. 
After the change of coordinates $w\to \sqrt{\frac{3}{2}} w$, $u\to u e^{-\sqrt{\frac{3}{2}} w}$, we obtain that such solution is the immersion $f_\lambda$ in Example \ref{type2bsubexample}.
\end{proof}

\subsection{Extrinsically homogeneous Lagrangian submanifolds of type III}

\begin{example}\label{type3submanifold}
Let $\R\ltimes_{\varphi_2}\R^2$ be the Bianchi group of type VI with the group law
\[
    (t,u)\cdot(s,v)=(t+s, \varphi_2(s)u+v) 
\] 
where $\varphi_2\colon \R\to\mathrm{Aut}(\R^2)$ is given by
\[\varphi_2(s)=\begin{pmatrix} 
    e^{-2s}&0\\
    0& e^s\\
\end{pmatrix}.\]
Let $\tilde{g}$ be the right invariant metric such that its components with respect to the frame of right invariant vector fields  $\left\{\pdv{}{t},e^{-2t}\pdv{}{u_1},e^t\pdv{}{u_2}\right\}$ are given by
\[\left(
\begin{array}{ccc}
 0 & 2 & \frac{7}{3} \sqrt{\frac{2}{3}} \\
 2 & 0 & 8 \sqrt{\frac{2}{3}} \\
 \frac{7}{3} \sqrt{\frac{2}{3}} & 8 \sqrt{\frac{2}{3}} & \frac{128}{9} \\
\end{array}
\right).\]

Now let $\mathfrak{h}$ be the Lie subalgebra of $\slf\oplus\slf\oplus\slf$ spanned by $\{e_1,e_2,e_3\}$ where 
\begin{equation*}
    \begin{split}
           e_1&=\bigg(\tfrac{1}{18} \left(27+2 \sqrt{6}\right)\ii-\Big(2 \sqrt{\tfrac{2}{3}}+\tfrac{3}{4}\Big)\jj+\Big(\tfrac{8}3 \sqrt{\tfrac{2}{3}}+\tfrac{3}{4}\Big)\kk,\\
&\quad-\Big(1+\tfrac{17}{12 \sqrt{6}}\Big)\ii+\tfrac{1}{96} \left(48-17 \sqrt{6}\right)\jj-\Big(\tfrac{1}{2}+\tfrac{85}{48 \sqrt{6}}\Big)\kk,\\
&\quad-\tfrac{1}{2}\ii+\tfrac{1}{4} \left(1-3 \sqrt{6}\right)\jj+\tfrac{1}{4} \left(3 \sqrt{6}-1\right)\kk\bigg),\\
        e_2&=\left(0,\sqrt{\tfrac{2}{3}}\ii+\tfrac{1}{2}\sqrt{\tfrac{3}{2}}\jj+\tfrac{5}{2 \sqrt{6}}\kk,0\right),\\
        e_3&=\left(\tfrac{8}{9} \left(2+3 \sqrt{6}\right)\ii-\tfrac{2}{3} \left(7+2 \sqrt{6}\right)\jj+\tfrac{2}{9} \left(37+6 \sqrt{6}\right)\kk,0,-6\jj+6\kk\right).
    \end{split}
\end{equation*}
The Lie algebra $\mathfrak{h}$ is a Bianchi Lie algebra of type VI with brackets
\[
[e_1,e_2]=-2e_2, \ \ \ [e_1,e_3]=e_3, \ \ \ [e_2,e_3]=0.
\]
Therefore $\R\ltimes_{\varphi_2}\R^2$ is the universal cover of $\exp(\mathfrak{h})$. Moreover, the map defined as
\[
(v,u,w)\mapsto \exp( v e_1+ \frac{2 u e^{2 v} v}{e^{2 v}-1}e_2+ \frac{v w}{e^v-1}e_3)
\]
and as $ \exp( 2u e_2+ w e_3)$ when $v=0$,
is a group isomorphism. Thus $\exp(\mathfrak{h})\cong \R\ltimes_{\varphi_2}\R^2 $. 
One can check that the immersion  $\jmath:\R\ltimes_{\varphi_2}\R^2\to\Sl\times\Sl$ given by
\[
\jmath(u,v,w)=\exp( v e_1+ \frac{2 u e^{2 v} v}{e^{2 v}-1}e_2+ \frac{v w}{e^v-1}e_3)\cdot (\id_2,\id_2)
\]
is a Lagrangian immersion, whose image is extrinsically homogeneous. 
The frame given by
\[
E_1 =-\sqrt{\frac{3}{2}} e^{-2 v}\jmath_u, \ \ \ \ E_2=-\frac{1}{\sqrt{6}}\jmath_v, \ \ \ \ E_3=\frac{7 e^{-2 v}}{4 \sqrt{6}} \jmath_u +\sqrt{6}\jmath_v-\frac{3}{4} e^v\jmath_w.
\]
is a $\Delta_2$-orthonormal frame with respect to which $A$ and $B$ take type III form from Lemma \ref{propAB}.
\end{example}
\begin{proposition}\label{type3prop}
    Let $M$ be an extrinsically homogeneous Lagrangian submanifold of $\Sl\times\Sl$. 
    Suppose that $A$ and $B$ take type III form from Lemma \ref{propAB} with respect to a $\Delta_2$-orthonormal frame $\{E_1,E_2,E_3\}$.
     Then $M$ is congruent to an open subset of the submanifold given in Example~\ref{type3submanifold}.
\end{proposition}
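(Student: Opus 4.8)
The plan is to follow the strategy already used in Propositions \ref{psl}, \ref{tori} and \ref{type2asubmanifold}: first pin down all the structure constants of the submanifold, then convert the adapted frame into a coordinate frame, derive an explicit second-order ODE system for the components of $f=(p,q)$, and finally integrate it and normalise using the available isometries.

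First I would invoke Proposition \ref{constanttype3}, which guarantees that the frame $\{E_1,E_2,E_3\}$ is unique and that all the functions $\theta_i$, $h_{ij}^k$ and $\omega_{ij}^k$ are constant. Since $A$ and $B$ are completely explicit in the type III case, there are no angle functions left to determine; the only genuine unknowns are the constants $h_{22}^1$, $h_{22}^2$, $h_{22}^3$, because by Lemma \ref{nablathirdcase} we already have $h(E_1,E_1)=0$, $h_{12}^3=\omega_{11}^1=\omega_{11}^3=\omega_{33}^2=0$, and every $\omega_{ij}^k$ is expressed through these three constants together with the sign $(-1)^k$ of $B$. To fix them I would substitute the constant connection and second fundamental form into the Gauss equation \eqref{Gauss} and the Codazzi equation \eqref{Codazzi} for well-chosen triples $X,Y,Z$; the resulting polynomial identities should determine $h_{22}^1$, $h_{22}^2$, $h_{22}^3$ uniquely, with the two signs of $B$ producing congruent submanifolds via one of the isometries $\Psi_{\kappa,\tau}$ of \eqref{isoslsl}.

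Once the constants are fixed, I would read off the action of $Q$ on the frame from \eqref{prodQ}, which dictates which derivatives of $p$ and $q$ vanish or coincide, exactly as in \eqref{relationsderivativestori} and \eqref{qv}. A feature specific to type III is that the frame is genuinely non-abelian: the brackets $[E_i,E_j]$ realise the Bianchi type VI algebra $[e_1,e_2]=-2e_2$, $[e_1,e_3]=e_3$, $[e_2,e_3]=0$. I would therefore rescale $E_1,E_2,E_3$ by solving first-order equations for exponential integrating factors, producing the $e^{-2v}$ and $e^{v}$ weights visible in Example \ref{type3submanifold}, until the rescaled fields commute and may be written as $f_u,f_v,f_w$. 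Feeding \eqref{relprodkal} into \eqref{connectionr8} then gives explicit second derivatives $f_{uu},\dots,f_{ww}$, hence a closed second-order ODE system for $p$ and $q$.

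The last step is integration. Using the isometries $(p,q)\mapsto(ap,bq)$ I would normalise $f$ at the origin to $(\id_2,\id_2)$, and using Lemma \ref{lemmasl2requaltoso21} I would conjugate by a matrix $c\in\mathrm{SL}^{\pm}(2,\R)$ so that the initial first derivatives match the prescribed split-quaternion combinations; applying $(p,q)\mapsto(cpc^{-1},cqc^{-1})$ then allows me to set $c=\id_2$. By uniqueness of solutions of the ODE system, the resulting immersion coincides, after an affine change of coordinates, with the map $\jmath$ of Example \ref{type3submanifold}. The principal obstacle I anticipate is computational rather than conceptual: because the frame is non-abelian and $\Delta_2$-null, the passage to a coordinate frame and the bookkeeping of the numerous $\omega_{ij}^k$ is delicate, and matching the rather intricate constants entering the generators $e_i$ to the integrated solution will require a carefully chosen reparametrisation.
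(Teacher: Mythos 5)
Your proposal follows essentially the same strategy as the paper's proof: use Proposition \ref{constanttype3} to make everything constant, pin down $h_{22}^1$, $h_{22}^2$, $h_{22}^3$ from the Codazzi equation (the paper needs only Codazzi, not Gauss), pass to a commuting frame via an exponential integrating factor $\rho$, derive the second-order ODE system for $(p,q)$ from \eqref{relprodkal} and \eqref{connectionr8}, and normalise initial conditions by isometries $(p,q)\mapsto(ap,bq)$ and $(p,q)\mapsto(cpc^{-1},cqc^{-1})$ before invoking ODE uniqueness. The one inaccuracy is that the coordinate frame is not obtained by rescaling the $E_i$ individually — the paper's third coordinate field is $\rho^{-1/2}\bigl(\tfrac{7}{12\sqrt{6}}E_1+\sqrt{6}E_2+\tfrac{1}{\sqrt{6}}E_3\bigr)$, a rescaled linear combination — but this is precisely the sort of computational adjustment your plan anticipates and does not affect the validity of the approach.
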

\begin{proof}
By Lemma \ref{isometrieswithAB} we may apply the isometry $\Psi_{0,1}$ in \eqref{isoslsl} and assume that the sign of $B$ is $-1$.
Proposition \ref{constanttype3} implies that the components of the second fundamental form and of the connection associated to $E_1$, $E_2$ and $E_3$ are all constant.
From the Codazzi equation~\eqref{Codazzi} with $X=E_1$, $Y=E_2$ and $Z=E_2$  we have $ h_{22}^2=\frac{2\sqrt{2}}{3}$. 
Then, computing the Codazzi equation with $X=E_3$, $Y=E_2$ and $Z=E_2$ we obtain
\[
h_{22}^1=-\frac{13}{18\sqrt{2}},\ \ \ \ h_{22}^3=\frac{5\sqrt{2}}{9}.
\]
We define the frame $\left\{\rho E_1,E_2,\rho^{-1/2}(\frac{7 }{12 \sqrt{6}}E_1+\sqrt{6} E_2+\frac{1}{\sqrt{6}}E_3)\right\}$ where $\rho$ is a non-vanishing smooth function that satisfies
\[
E_1(\rho)=0, \ \ E_2(\rho)=\sqrt{\tfrac{2}{3}}\rho, \ \ E_3(\rho)=-2\sqrt{6}\rho.
\]
From \eqref{relationsomegatype3} we can check that $\rho$ indeed exists and that the defined frame is a coordinate frame.
We write said frame as $\{f_u,f_v,f_w\}$.
Hence $\rho=e^{\sqrt{\frac{2}{3}} v}$.
Equation \eqref{prodQ} yields
\begin{equation*}
    \begin{split}
        Qf_u&=f_u,\\
        Qf_v&=-\frac{23}{18 \rho }f_u-3f_v+2 \sqrt{\frac{2\rho}{3}} f_w-\frac{7}{6 \sqrt{3} \rho }Jf_u-4 \sqrt{3} Jf_v+2 \sqrt{2\rho }Jf_w,\\
        Qf_w&=-\frac{7\rho ^{-3/2}}{\sqrt{6} }f_u-4\sqrt{\frac{6}{\rho }}f_v+5f_w-\frac{5\rho ^{-3/2}}{3 \sqrt{2} }Jf_u-12\sqrt{\frac{2}{\rho }}Jf_v+4\sqrt{3}Jf_w.
    \end{split}
\end{equation*}
If we denote $f=(p,q)$ then the first equation implies that $p_u=0$. From the second and third equations we obtain
\begin{equation}
p_v=pq^{-1}\left(-\frac{5q_u}{6 \rho}-4 q_v+2 \sqrt{\frac{2}{3}} \sqrt{\rho}q_w\right), \ \ \ p_w=pq^{-1}\left(4 q_w-\frac{13q_u+72 \rho q_v}{3 \sqrt{6} \rho ^{3/2}}\right). \label{pinitialconditionstype3}    
\end{equation}
The relation between the connection $\nabla^E$ associated to the product metric and the nearly Kähler connection $\tilde{\nabla}$ given in Equation \eqref{relprodkal} yields
\begin{equation*}
    \begin{split}
        \nabla^E_{f_u}f_u&=0, \\
        \nabla^E_{f_u}f_v&=-\tfrac{11}{4 \sqrt{6}}f_u-3 \sqrt{6} \rho f_v+3 \rho ^{3/2}f_w+\tfrac{1}{\sqrt{2}}Jf_u,\\
       \nabla^E_{f_u}f_w &=-\tfrac{13}{4 \sqrt{\rho }}f_u-18 \sqrt{\rho }f_v+3 \sqrt{6} \rho f_w+\sqrt{\tfrac{3}{\rho }} Jf_u,\\
        \nabla^E_{f_v}f_v&=\tfrac{49}{48 \sqrt{6} \rho }f_u+\tfrac{29}{2 \sqrt{6}}f_v-\tfrac{7 \sqrt{\rho }}{4} f_w-\tfrac{17}{9 \sqrt{2} \rho }Jf_u-5 \sqrt{2}Jf_v+2 \sqrt{3\rho }Jf_w,\\
        \nabla^E_{f_v}f_w&=\tfrac{91}{96 \rho ^{3/2}}f_u+\tfrac{53}{4 \sqrt{\rho }}f_v-\tfrac{43}{4 \sqrt{6}}f_w-\tfrac{25}{6 \sqrt{3} \rho ^{3/2}}Jf_u-7 \sqrt{\tfrac{3}{\rho }}Jf_v+\tfrac{9}{\sqrt{2}}Jf_w,\\
        \nabla^E_{f_w}f_w&=\tfrac{19}{3 \sqrt{6} \rho ^2} f_u+\tfrac{12 \sqrt{6}}{\rho }f_v-\tfrac{10}{\sqrt{\rho }}f_w-\tfrac{17}{3 \sqrt{2} \rho ^2}Jf_u-\tfrac{12 \sqrt{2}}{\rho }Jf_v+6 \sqrt{\tfrac{3}{\rho }}Jf_w.
    \end{split}
\end{equation*}
From the relation between the product metric and the nearly Kähler metric in Equation \eqref{prodmetric} it follows
\begin{equation}
    \begin{aligned}
        \li f_u,f_u\ri&=0, && \li f_v,f_v\ri=0, && \li f_w,f_w\ri=\tfrac{4}{\rho }, \\
        \li f_u,f_v\ri&=\tfrac{3 }{2}\rho,\ \ \  && \li f_u,f_w\ri=3 \sqrt{\tfrac{3\rho}{2} }, \ \ \  && \li f_v,f_w\ri=\tfrac{5 }{8}\sqrt{\tfrac{3}{2 \rho }}.
    \end{aligned}\label{metricsfIII}
\end{equation}
and
\begin{equation}
    \begin{aligned}
        \li f_u,Qf_u\ri&=0, && \li f_v,Qf_v\ri=-\tfrac{4}{3}, && \li f_w,Qf_w\ri=-\tfrac{4}{\rho }, \\
       \li f_u,Qf_v\ri&=\tfrac{3}{2}\rho,\ \ \  && \li f_u,Qf_w\ri=3 \sqrt{\tfrac{3\rho}{2} }, \ \ \ && \li f_v,Qf_w\ri=-\tfrac{49}{8 \sqrt{6\rho }} .
    \end{aligned}\label{metricsqIII}
\end{equation}
In particular we have
\begin{equation}
    \begin{aligned}
        \li q_u,q_u\ri&=0, && \li q_v,q_v\ri=-\tfrac{2}{3}, && \li q_w,q_w\ri=0,\\
        \li q_u,q_v\ri&=\tfrac{3}{2}\rho, \ \ \ \ && \li q_u,q_w\ri=3 \sqrt{\tfrac{3\rho}{2} },\ \ \ \  && \li q_v,q_w\ri=-\tfrac{17}{8 \sqrt{6\rho }}.\\
    \end{aligned}\label{metricsqq}
\end{equation}
Now we compute 
\begin{equation*}
    \begin{split}
        f_{uu}&=0, \\
        f_{uv}&=-\tfrac{11}{4 \sqrt{6}}f_u-3 \sqrt{6} \rho f_v+3 \rho ^{3/2}f_w+\tfrac{1}{\sqrt{2}}Jf_u+\tfrac{3 \rho }{4}f+\tfrac{3 \rho }{4}Qf,\\
        f_{uw}&=-\tfrac{13}{4 \sqrt{\rho }}f_u-18 \sqrt{\rho }f_v+3 \sqrt{6} \rho f_w+\sqrt{\tfrac{3}{\rho }} Jf_u+\tfrac{3}{2} \sqrt{\tfrac{3}{2}} \sqrt{\rho} f+\tfrac{3}{2} \sqrt{\tfrac{3\rho}{2}} Qf,\\
        f_{vv}&=\tfrac{49}{48 \sqrt{6} \rho }f_u+\tfrac{29}{2 \sqrt{6}}f_v-\tfrac{7 \sqrt{\rho }}{4} f_w-\tfrac{17}{9 \sqrt{2} \rho }Jf_u-5 \sqrt{2}Jf_v+2 \sqrt{3\rho }Jf_w-\tfrac{2}{3}Qf,\\
        f_{vw}&=\tfrac{91}{96 \rho ^{3/2}}f_u+\tfrac{53}{4 \sqrt{\rho }}f_v-\tfrac{43}{4 \sqrt{6}}f_w-\tfrac{25}{6 \sqrt{3} \rho ^{3/2}}Jf_u-7 \sqrt{\tfrac{3}{\rho }}Jf_v+\tfrac{9}{\sqrt{2}}Jf_w+\tfrac{5}{16} \sqrt{\tfrac{3}{2\rho}} f-\tfrac{49}{16 \sqrt{6\rho }} Qf,\\
        f_{ww}&=\tfrac{19}{3 \sqrt{6} \rho ^2} f_u+\tfrac{12 \sqrt{6}}{\rho }f_v-\tfrac{10}{\sqrt{\rho }}f_w-\tfrac{17}{3 \sqrt{2} \rho ^2}Jf_u-\tfrac{12 \sqrt{2}}{\rho }Jf_v+6 \sqrt{\tfrac{3}{\rho }}Jf_w+\tfrac{2}{\rho }f-\tfrac{2}{\rho }Qf.
    \end{split}
\end{equation*}
Hence $q$ satisfies
\begin{equation}
    \begin{split}
        q_{uu}&=0,\\
        q_{ww}&=0,\\
        q_{uv}&=\frac{1}{8} \left(12 \rho  \left(q-2 \sqrt{6}q_v+2 \sqrt{\rho } q_w\right)-5 \sqrt{6} q_u   \right),\\
        q_{uw}&=\frac{1}{4 \sqrt{\rho }}\left(6 \rho  \left(\sqrt{6} q-12 q_v+2 \sqrt{6\rho } q_w\right)-17 q_u\right),\\
        q_{vv}&=\frac{1}{864 \rho }\left(323 \sqrt{6} q_u-72 \rho  \left(8 q-17 \sqrt{6} q_v+13 \sqrt{\rho } q_w\right)\right),\\
        q_{vw}&=\frac{1}{288 \rho ^{3/2}}\left(289 q_u-6 \rho  \left(17 \sqrt{6} q-204 q_v+30 \sqrt{6\rho }q_w\right)\right),\\
    \end{split}\label{equationsq}
\end{equation}
and $p$ satisfies
\begin{equation}
    \begin{split}
        p_{vv}&=\frac{2 }{3}p-\frac{1}{18 \sqrt{6} \rho }\left(19 pq^{-1}q_u+72 \rho  pq^{-1}q_v\right),\\
        p_{vw}&=\frac{1}{6 \rho ^{3/2}}\left(4 \rho  \left(\sqrt{6} p-6pq^{-1} q_v\right)-7pq^{-1} q_u\right),\\
        p_{ww}&=\frac{1}{9 \rho ^2}\left(36 \rho  \left(p-2 \sqrt{6} pq^{-1}q_v+\sqrt{\rho } pq^{-1}q_w\right)-17 \sqrt{6} pq^{-1}q_u\right).
    \end{split} \label{equationsp}
\end{equation}
After applying an isometry of the type $(p,q)\mapsto(ap,bq)$, we may assume that $p(0)=\id_2$ and $q(0)=\id_2$. 
From \eqref{pinitialconditionstype3} and \eqref{metricsqq} it follows that there exists a matrix $c\in \mathrm{SL}^\pm(2,\R)$ such that 
\begin{equation*}
    \begin{split}
        q_u(0)&=c\left(
\begin{array}{cc}
 1 & 2 \\
 -\frac{1}{2} & -1 \\
\end{array}
\right)c^{-1}, \\
 q_w(0)&=c\left(
\begin{array}{cc}
 0 & 0 \\
 3 \sqrt{\frac{3}{2}} & 0 \\
\end{array}
\right)c^{-1}, \\
\end{split}
\end{equation*}
\begin{equation*}
    \begin{split}
q_v(0)&=c\left(
\begin{array}{cc}
    \frac{1}{72} \left(-17+ 6 \sqrt{6}\right) & -\frac{17}{36} \\
    \frac{1}{144} \left(233-12  \sqrt{6}\right) & \frac{1}{72} \left(17-6 \sqrt{6}\right) \\
   \end{array}
\right)c^{-1},\\
 p_v(0)&=c\left(
\begin{array}{cc}
    \frac{1}{9}-\sqrt{\frac{2}{3}}% 
    & \frac{2}{9} \\
     \sqrt{\frac{2}{3}}-\frac{1}{18} &\sqrt{\frac{2}{3}}-\frac{1}{9} \\
   \end{array}
\right)c^{-1}, \\ p_w(0)&=c\left(
\begin{array}{cc}
  -2+\frac{2}{9}\sqrt{6} & \frac{4}{3} \sqrt{\frac{2}{3}} \\
 2 -\frac{1}{3}\sqrt{\frac{2}{3}} & 2 -\frac{2}{9}\sqrt{6} \\
\end{array}
\right)c^{-1}.
    \end{split}
\end{equation*}
By applying the isometry $(p,q)\mapsto (c p c^{-1},c q c^{-1})$ we obtain that any solution of the system of differential equations given in \eqref{equationsq} and \eqref{equationsp} is congruent to the solution with $c=\id_2$. This solution is the map given in Example \ref{type3submanifold} after the change of coordinates $v\to\frac{v}{\sqrt{6}}$, $u\to\sqrt{\frac{3}{2}} u$, $w\to\frac{1}{4} \sqrt{\frac{3}{2}}w$.
   
\end{proof}

\subsection{Extrinsically homogeneous Lagrangian submanifolds of type IV}
\begin{proposition}\label{propnolagrtypeiv}
There are no extrinsically homogeneous Lagrangian submanifolds of the pseudo-nearly Kähler $\Sl\times\Sl$ of type IV in Lemma \ref{propAB}.    
\end{proposition}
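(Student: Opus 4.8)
The plan is to argue by contradiction: assume $f\colon M\to\Sl\times\Sl$ is an extrinsically homogeneous Lagrangian immersion of type IV, and show that the structure equations are incompatible with the defining constraints $\psi\neq0$ and $\theta_2\neq0,\pi$ of type IV. The first step is to invoke Proposition \ref{frameuniquecase4}, which provides the unique $\Delta_3$-orthonormal frame $\{E_1,E_2,E_3\}$ realizing the type IV form and guarantees that $\theta_1,\theta_2,\psi$ and all the $h_{ij}^k,\omega_{ij}^k$ are constant. Since $\theta_1$ and $\psi$ are constant, the left-hand sides of all six identities in \eqref{derivativeslambda} vanish, which forces $h_{11}^1=h_{22}^1=0$, $h_{11}^2=h_{22}^2=0$, $h_{12}^3=0$ and $h_{11}^3=h_{22}^3$. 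Combining this with minimality ($h^i_{33}=h^i_{22}-h^i_{11}$) and the Lagrangian symmetries $h_{ij}^k=\delta_j\delta_k h_{ik}^j$ collapses the second fundamental form to a single parameter $a:=h_{11}^3=h_{22}^3$:
\[
h(E_1,E_1)=h(E_2,E_2)=aJE_3,\quad h(E_1,E_3)=aJE_1,\quad h(E_2,E_3)=-aJE_2,
\]
all other components vanishing. Here $a\neq0$: otherwise $h\equiv0$ and $M$ is totally geodesic, but the totally geodesic Lagrangian submanifolds of Theorem \ref{totgeo} are of type I (type I with coinciding angles, cf.\ Proposition \ref{twoanglesequal}), contradicting that $M$ is of type IV.

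Next I would substitute this $h$ into Lemma \ref{nablapfourthcase}, turning every $\omega_{ij}^k$ into an explicit constant in $a,\sin6\theta_1,\cos6\theta_1,\sinh\psi,\cosh\psi$ and the sign $(-1)^k$; for instance \eqref{restomegas} gives $\omega_{11}^3=\omega_{22}^3=\frac{a\sin6\theta_1}{\cos6\theta_1-\cosh\psi}$ and \eqref{firstomegas} gives $\omega_{32}^1=(-1)^k a\coth\psi-\tfrac{1}{\sqrt6}$. Because every structure function is now constant, the Codazzi equation \eqref{Codazzi} degenerates to a purely algebraic system: each $(\overline{\nabla}_{E_i}h)(E_j,E_k)$ is computed from the constant $h_{ij}^k$ and $\omega_{ij}^k$ together with the normal-connection rule $\nabla^{\bot}_{E_i}(JE_l)=G(E_i,E_l)+J\nabla_{E_i}E_l$, where $G$ is read off from the table \eqref{tabla}. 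Carrying out one such computation, for $X=E_1$, $Y=E_3$, $Z=E_2$, one is led to a representative identity of the form
\[
2a\left((-1)^k a\,\frac{\cos6\theta_1\cosh\psi-1}{\sinh\psi\,(\cos6\theta_1-\cosh\psi)}-\frac{1}{\sqrt6}\right)=-\frac23(-1)^k\cos6\theta_1\sinh\psi .
\]

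The remaining work, and the main obstacle, is to assemble enough of these component equations (from the various triples in \eqref{Codazzi}, supplemented if necessary by sectional-curvature identities extracted from the Gauss equation \eqref{Gauss}) and to eliminate the single unknown $a$ so as to reach a contradiction. Since each equation is a rational expression in $\sin6\theta_1,\cos6\theta_1,\sinh\psi,\cosh\psi$, the expected outcome is that the combined system forces either $\sinh\psi=0$ or $\sin\theta_2=0$ (equivalently $\beta=\delta=0$), both of which are excluded for type IV. The delicate point is to organize the elimination so that the hyperbolic factors $\sinh\psi,\cosh\psi$ do not cancel in a way that reintroduces spurious branches; once $a$ is eliminated between two independent Codazzi components, the surviving relation should be a trigonometric–hyperbolic identity that cannot hold unless a degenerate angle occurs, which completes the contradiction and proves that no such submanifold exists.
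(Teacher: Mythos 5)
Your setup is correct and follows exactly the same route as the paper: Proposition \ref{frameuniquecase4} makes all structure functions constant, so the six identities in \eqref{derivativeslambda} force $h_{11}^1=h_{22}^1=h_{11}^2=h_{22}^2=h_{12}^3=0$ and $h_{11}^3=h_{22}^3$, and together with minimality and the $\Delta_3$-symmetries this reduces the second fundamental form to the single parameter $a=h_{11}^3$, just as in the paper's proof. Your remark that $a\neq 0$ is valid but not needed for the argument. Moreover, your ``representative identity'' for the Codazzi triple $X=E_1$, $Y=E_3$, $Z=E_2$ is in fact correct: it is precisely the equation the paper uses in its final step.

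The genuine gap is that you stop exactly where the proof must be completed: the contradiction is never derived, only predicted (``the expected outcome is that the combined system forces $\sinh\psi=0$ or $\sin\theta_2=0$''). Since every quantity is constant, this is not an open-ended elimination problem but a finite, explicit computation, and it must actually be carried out. Concretely, the paper finishes as follows: the Codazzi equation \eqref{Codazzi} with $(X,Y,Z)=(E_3,E_1,E_1)$ gives
\[
(h_{11}^3)^2=\tfrac{1}{3}\left(\cosh \psi -\cos 6 \theta_1\right) \cosh \psi ,
\]
and with $(X,Y,Z)=(E_1,E_2,E_2)$ it gives the relation \eqref{h1132}; combining the two yields
\[
h_{11}^3=2 \sqrt{\tfrac{2}{3}}\, (-1)^{k+1} \sinh 2 \psi ,
\qquad
\cos 6\theta_1=9 \cosh \psi -8 \cosh 3 \psi ,
\]
which expresses both unknowns in terms of $\psi$ alone. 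Substituting these into your identity for $(E_1,E_3,E_2)$ collapses it to $\sinh 2\psi=0$, contradicting $\psi\neq 0$ in Lemma \ref{propAB}. Note also that the concern about ``spurious branches'' does not materialize: after the substitution one is left with a single hyperbolic equation in $\psi$ whose only solution is the excluded value $\psi=0$. Until you select these additional Codazzi components and perform the elimination explicitly, what you have written is a correct plan with a correct first equation, but not a proof.
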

\begin{proof}
    Let $M$ be an extrinsically homogeneous Lagrangian submanifold of the pseudo-nearly Kähler $\Sl\times\Sl$. 
    Suppose that $A$ and $B$ take type IV form with respect to a $\Delta_3$-orthonormal frame $\{E_1,E_2,E_3\}$.
    Then by Proposition \ref{frameuniquecase4} the functions $\psi$, $\theta_1$, $\theta_2$, $h_{ij}^k$ and $\omega_{ij}^k$ are constant. 
    Thus by Lemma \ref{nablapfourthcase} the functions $h_{ij}^k$ are all zero except for $h_{11}^3$. Recall that we can write $\theta_2=-2\theta_1 +k \pi$, with $k=0,1$. 
    Computing the Codazzi equation~\eqref{Codazzi} with $X=E_3$, $Y=E_1$, $Z=E_1$ and $X=E_1$, $Y=E_2$, $Z=E_2$ yields
    \begin{equation}
        (h_{11}^3)^2=\frac{1}{3} (\cosh \psi -\cos 6 \theta_1) \cosh \psi ,    \label{h113cuadrado}
    \end{equation}
    and
   \begin{equation}
        \sqrt{\frac{2}{3}} h_{11}^3=\frac{4 (-1)^{k } \sinh (\psi ) \left(-\cos 6 \theta_1 \cosh \psi +3 (h_{11}^3)^2+\cosh ^2(\psi )\right)}{3 (\cos 6 \theta_1-\cosh \psi)}\label{h1132}.
   \end{equation}
    Plugging \eqref{h113cuadrado} into \eqref{h1132} gives
    \[
    h_{11}^3=2 \sqrt{\frac{2}{3}} (-1)^{k +1} \sinh 2 \psi .
    \]
    Comparing both values of $(h_{11}^3)^2$ we derive $\cos 6\theta_1=9 \cosh \psi -8 \cosh3 \psi $. Then writing the Codazzi equation~\eqref{Codazzi} with $X=E_1$, $Y=E_3$ and $Z=E_2$ we obtain $\sinh 2\psi=0$. This is a contradiction since by Lemma \ref{propAB}, $\psi$ is different from zero.
\end{proof}
\section{Proof of the main theorem}\label{sectionproofmaintheorem}
\begin{proof}[Proof of Theorem \ref{maintheorem}]
    By Lemma \ref{propAB} we separate the argument into four cases. 
    
    In \cite{anarella}, the authors proved that any totally geodesic submanifold  is of type I and is congruent to one of the first three examples given in Theorem \ref{maintheorem}.  
    By Proposition \ref{typeIcurvature}, Proposition \ref{psl} and Proposition \ref{tori}, any non-totally geodesic extrinsically homogeneous Lagrangian submanifold of type I is congruent to either Example \ref{toriexample} or to Example \ref{pslexample}. 

    Proposition \ref{type2asubmanifold} implies that any extrinsically homogeneous Lagrangian submanifold of type II is congruent to an open subset of either the image of the immersion in  Example \ref{type2asubmanifoldexample} or the image of the immersion in Example \ref{type2bsubexample}. 

    Proposition \ref{type3prop} states that any extrinsically homogeneous Lagrangian submanifold of type III is congruent to the one given in Example \ref{type3submanifold}.

    Proposition \ref{propnolagrtypeiv} shows that there are no extrinsically homogeneous Lagrangian submanifolds of type IV.

    Except for the first, sixth and seventh examples in Theorem \ref{maintheorem}, all the submanifolds are not isometric and therefore not congruent. 
    Hence, it only remains to distinguish between the aforementioned cases. 
 
    First, the first submanifold in Theorem \ref{maintheorem} is the first example of Theorem \ref{totgeo}, therefore the only one of these three that is totally geodesic. 
    The sixth submanifold is the immersion $\imath$ given in Example \ref{type2asubmanifoldexample} and the seventh one is the family of immersions $f_\lambda$ given in Example \ref{type2bsubexample}. 

    Suppose that $\iota$ is congruent to $f_\lambda$ for some $\lambda$. 
    That means, there exists a isometry $\mathcal{F}$ of $\Sl\times\Sl$ that maps one into the other. 
    Suppose that $\mathcal{F}\in\Sl\times\Sl\times\Sl\rtimes\Z_2$. 
    These isometries preserve $P$ and $J$, hence $A$ and $B$ have the same shape with respect to $\{E_i\}_i$ and with respect to $\{\mathcal{F}_*E_i\}_i$. 
    Hence, 
    \[
    -\frac{\sqrt{2}}{3}=g(h(E_2,E_2),JE_3)=g(\mathcal{F}_*h(E_2,E_2),\mathcal{F}_*JE_3)=g(h(\mathcal{F}_*E_2,\mathcal{F}_*E_2),J\mathcal{F}_*E_3)=\frac{2\sqrt{2}}{3},
    \]
    which is a contradiction.

    In Theorem \ref{groupofisometries1} we showed that the isometry group of $\Sl\times\Sl$ is a semidirect product of $S_3$ with $\Sl\times\Sl\times\Sl\rtimes\Z_2$. 
    Therefore, to complete the proof we can assume that $\mathcal{F}\in S_3$, i.e. $\mathcal{F}=\Psi_{\kappa,\tau}$ for some $\kappa\in\{0,1\}$, $\tau\in\{0,\tfrac{2\pi}{3},\tfrac{4\pi}{3}\}$. 
    Moreover, we can assume that $\tau\neq0$, since otherwise $\mathcal{F}$ preserves $P$, and therefore we may use the same argument as before, up to sign.  
    From Lemma \ref{isometrieswithAB}it follows that $P$ restricted to $\mathcal{F}(M)$ takes the shape $\tilde A+J \tilde B$, where
    \begin{equation*}
        \begin{split}
             \tilde{A}&=\cos \tau A+(-1)^\kappa \sin \tau B,\\
             \tilde{B}&=-\sin\tau A+(-1)^\kappa \cos \tau B.\\
        \end{split}
    \end{equation*}
For $\tau\neq0$ these matrices have a different form than $A$ and $B$, as it can be seen in Lemma 4 of~\cite{anarella}. 
Therefore, there does not exist such a $\Delta_2$-orthonormal frame $\{E_i\}_i$ such that $A$ and $B$ take type~II form in Lemma \ref{propAB} on $\mathcal{F}(M)$, which is a contradiction.

Similar arguments can be used to distinguish between $f_{\lambda_1}$ and $f_{\lambda_2}$ for $\lambda_1\neq\lambda_2$, by considering the function $\omega_{33}^2=\sqrt{\tfrac{2}{3}}(1-\lambda)$ instead of $h_{22}^3$.
\end{proof}
\section*{Acknowledgements}
The author would like to thank Joeri Van der Veken and Luc Vrancken for their guidance and support throughout the writing of this article. 
\bibliographystyle{abbrv}
\bibliography{homogeneousbib}

\end{document}